\documentclass{article} 
\newif\ifarxiv
\arxivtrue 
\ifarxiv
  \usepackage{iclr2027_conference_hack}
\else
  \usepackage{iclr2027_conference}
\fi
\usepackage{times}

\usepackage[utf8]{inputenc} 
\usepackage[T1]{fontenc}    
\usepackage[hypertexnames=false]{hyperref}       
\usepackage{url}            
\usepackage{booktabs}       
\usepackage{amsfonts}       
\usepackage{nicefrac}       
\usepackage{microtype}      
\usepackage{xcolor}         

\usepackage{adjustbox}
\usepackage{subcaption}

\newcommand{\markchanges}{}

\newcommand{\squeeze}{\textstyle}

\makeatletter
\newenvironment{protocol}[1][htb]{%
    \renewcommand{\ALG@name}{Protocol}
   \begin{algorithm}[#1]%
  }{\end{algorithm}}
\makeatother



\usepackage{tcolorbox}

\newtcolorbox{theorembox}{
  colback=gray!20,
  colframe=gray!20,
  boxrule=0.8pt,
  before skip=5pt,
  after skip=5pt,
  boxsep=-1mm,
}

\usepackage{graphicx}
\usepackage{apptools}
\usepackage[flushleft]{threeparttable}
\usepackage{array,booktabs,makecell}
\usepackage{multirow}
\usepackage{nicefrac}
\usepackage{amsthm}
\usepackage{amsmath,amsfonts,bm}
\usepackage{wrapfig}
\usepackage{caption}
\usepackage{siunitx}
\usepackage{thm-restate}
\usepackage{nccmath}
\usepackage{empheq}
\usepackage{bbm}
\usepackage{tabularx}

\usepackage{algorithmic}
\usepackage{algorithm}
\usepackage{filecontents}

\usepackage{color}
\usepackage{colortbl}
\definecolor{bgcolor}{rgb}{0.76,0.88,0.50}
\definecolor{bgcolor0}{rgb}{0.93,0.99,1}
\definecolor{bgcolor1}{rgb}{0.8,1,1}
\definecolor{bgcolor2}{rgb}{0.8,1,0.8}
\definecolor{bgcolor3}{rgb}{0.50,0.90,0.50}
\usepackage{tcolorbox}
\usepackage{pifont}
\definecolor{mydarkgreen}{rgb}{39,130,67}
\definecolor{mydarkred}{rgb}{192,25,25}

\newcommand{\norm}[1]{\left\| #1 \right\|}

\newcommand{\inp}[2]{\left\langle#1,#2\right\rangle} 
\newcommand{\abs}[1]{\left| #1 \right|}
\newcommand{\flr}[1]{\left\lfloor #1\right\rfloor} 
\newcommand{\ceil}[1]{\left\lceil #1\right\rceil} 

\newcommand{\R}{\mathbb{R}} 
\newcommand{\N}{\mathbb{N}} 

\newcommand{\Exp}[1]{{\mathbb{E}}\left[#1\right]}
\newcommand{\ExpSub}[2]{{\mathbb{E}}_{#1}\left[#2\right]}

\newcommand{\cA}{\mathcal{A}}

\newcommand{\cF}{\mathcal{F}}

\newcommand{\cO}{\mathcal{O}}

\newcommand{\mA}{\mathbf{A}}

\newcommand{\mI}{\mathbf{I}}

\theoremstyle{plain}
\newtheorem{theorem}{Theorem}[section]

\newtheorem{lemma}[theorem]{Lemma}

\theoremstyle{definition}
\newtheorem{definition}[theorem]{Definition}
\newtheorem{assumption}[theorem]{Assumption}
\theoremstyle{remark}
\newtheorem{remark}[theorem]{Remark}

\newcommand{\eqdef}{:=}

\makeatletter
\newcommand{\vast}{\bBigg@{4}}

\usepackage[scaled=0.86]{helvet}
\newcommand{\algname}[1]{{\sf #1}}

\newcounter{takeaway}

\newenvironment{takeawaybox}
  {\refstepcounter{takeaway}\begin{theorembox}\textbf{Takeaway \thetakeaway: }}
  {\end{theorembox}}

\usepackage[textsize=tiny]{todonotes}

\allowdisplaybreaks

\hypersetup{
  colorlinks   = true, 
  urlcolor     = blue, 
  linkcolor    = {red!75!black}, 
  citecolor   = {blue!50!black} 
}

\title{Bridging the Gap Between Homogeneous and Heterogeneous Asynchronous Optimization Is Surprisingly Difficult}

\author{
   Alexander Tyurin \\
   \phantom{,}AXXX, Moscow, Russia \\
   \phantom{,}Applied AI Institute, Moscow, Russia \\
}

\ifarxiv
\iclrfinalcopy 
\fi
\begin{document}

\maketitle

\begin{abstract}
Modern large-scale machine learning tasks often require multiple workers, devices, CPUs, or GPUs to compute stochastic gradients in parallel and asynchronously to train model weights. Theoretical results typically distinguish between two settings: (i) the homogeneous setting, where all workers have access to the \emph{same} data distribution, and (ii) the heterogeneous setting, where each worker operates on \emph{different} data distributions. Known optimal time complexities in these settings reveal a significant gap, with far more pessimistic guarantees in the heterogeneous case. In this work, we investigate whether these pessimistic optimal time complexities can be overcome under different assumptions. Surprisingly, we show that improvement is provably impossible under widely used first- and second-order similarity assumptions for any randomized algorithm. We then turn to the interpolation regime and demonstrate that the weak interpolation assumption alone is also insufficient. Finally, we introduce a minimal combination of irreducible assumptions, strong interpolation and the local Polyak-\L{}ojasiewicz condition, to derive a new time complexity bound that matches the dependence on worker computation times in the best-known result in the homogeneous setting, without requiring identical data distributions.
\end{abstract}

\section{Introduction}
\label{sec:introduction}
We consider optimization problems described by
\begin{align}
\label{eq:main_task}
\squeeze \min \limits_{x \in \R^d} \Big \{f(x) \eqdef \frac{1}{n} \sum\limits_{i=1}^n \ExpSub{\xi_i \sim \mathcal{D}_i}{f_i(x;\xi_i)}\Big \},
\end{align}
where $f_i\,:\,\R^d \times \mathbb{S}_{\xi_i} \rightarrow \R$ and $\xi_i$ is a random variable with distribution $\mathcal{D}_i$ on $\mathbb{S}_{\xi_i}$  for all $i \in [n].$ Let us denote $f_i(x) \eqdef \ExpSub{\xi_i \sim \mathcal{D}_i}{f_i(x;\xi_i)}.$ In our setup, we have $n$ workers/clients/CPUs/GPUs working in parallel and asynchronously, and each worker $i$ has access only to the stochastic gradient $\nabla f_i(x;\xi_i)$ of the function $f_i$ for all $x \in \R^d.$ 
We concentrate on the standard convergence metric and want to find a (possibly random) point $\bar{x}$ such that ${\mathbb{E}}[\norm{\bar{x} - x_*}^2] \leq \varepsilon,$ where $x_*$ is a solution of \eqref{eq:main_task}. 
Such a problem arises in many machine learning (ML), deep learning, federated learning (FL), and data science problems \citep{konevcny2016federated,mcmahan2017communication,goodfellow2016deep}. 
In general, we use the following standard assumptions from convex stochastic optimization, but each result states which assumptions it requires.
\begin{assumption}[Global smoothness]
  \label{ass:global_lipschitz_constant}
  The function $f$ is differentiable and $L$--smooth, i.e., 
$\norm{\nabla f(x) - \nabla f(y)} \leq L \norm{x - y}$ for all $x, y \in \R^d.$
\end{assumption}
\begin{assumption}[Local smoothness]
  \label{ass:lipschitz_constant}
  The functions $f_i$ are differentiable and $L_i$--smooth.
  We also define $L_{\max} \eqdef \max_{i \in [n]} L_i.$ Note that $L \leq L_{\max}$ (This is why we distinguish Assum.~\ref{ass:global_lipschitz_constant} and~\ref{ass:lipschitz_constant}).
\end{assumption}
\begin{assumption}[Convexity]
  \label{ass:convex}
  The functions $f_i$ are convex for all $i \in [n]$. The function $f$ attains a minimum at a (non-unique) point $x_* \in \R^d.$
\end{assumption}
\begin{assumption}[Unbiased and $\sigma^2$-variance-bounded noise]
  \label{ass:stochastic_variance_bounded}
  For all $x \in \R^d,$ stochastic gradients $\nabla f_i(x;\xi)$ are unbiased and $\sigma^2$-variance-bounded, i.e., ${\rm \mathbb{E}}_{\xi_i}[\nabla f_i(x;\xi_i)] = \nabla f_i(x)$ and 
  ${\rm \mathbb{E}}_{\xi_i}[\|\nabla f_i(x;\xi_i) - \nabla f_i(x)\|^2] \leq \sigma^2$ for all $i \in [n],$ where $\sigma^2 \geq 0.$ 
\end{assumption}
We also consider the case where $f$ satisfies the P\L-condition, which is a much weaker assumption than $\mu$--strong convexity \citep{karimi2016linear}:
\begin{assumption}[Global Polyak-\L ojasiewicz condition]
  \label{ass:pl_global}
  There exists $\mu > 0$ such that $\textstyle \norm{\nabla f(x)}^2 \geq 2 \mu \left(f(x) - f^*\right)$ for all $x \in \R^d,$ where $f^*$ is the finite optimal function value of $f.$
\end{assumption}

We focus on the modern setup where many workers work together in a distributed environment, where the workers can have \emph{arbitrarily computation behaviors} due to hardware delays or network connectivity problems. Most previous works typically assume that the workers have the same performance that does not change over time. In contrast, our focus is on the setting where the \emph{computation times are heterogeneous} and non-constant. 

In the literature, the optimization problem \eqref{eq:main_task} in the asynchronous environment is considered in two regimes: i) \emph{heterogeneous setting}, where the functions $f_i$ can be arbitrarily different; in the context of ML and FL, it means the workers have access to different datasets. ii) \emph{homogeneous setting}, where the functions $f_i$ are equal; in the context of ML and FL, it means the workers have access to the same dataset  \citep{koloskova2022sharper,mishchenko2022asynchronous,feyzmahdavian2023asynchronous}.

\textbf{Notations.} $[n] \eqdef \{1, \dots, n\};$ $\N_0 \eqdef \{0, 1, 2, \dots\};$ $\norm{\cdot}$ is the standard Euclidean norm; $\inp{\cdot}{\cdot}$ is the standard dot product; $g = \cO(f):$ exist $C > 0$ such that $g(z) \leq C \times f(z)$ for all $z \in \mathcal{Z};$ $g = \Omega(f):$ exist $C > 0$ such that $g(z) \geq C \times f(z)$ for all $z \in \mathcal{Z};$ $g = \Theta(f):$ $g = \cO(f)$ and $g = \Omega(f);$ $g = \widetilde{\Theta}(f):$ the same as $g = \Theta(f)$ but up to logarithmic factors.

\subsection{Previous work}
\label{sec:previous_work}
\textbf{Oracle complexity.} In the classical optimization theory \citep{nemirovskij1983problem}, algorithms are compared in terms of \emph{oracle calls}. Assume that the number of workers is one and we work with nonconvex functions and Assumptions~\ref{ass:global_lipschitz_constant} and \ref{ass:stochastic_variance_bounded}.
It is well known \citep{arjevani2022lower,carmon2020lower} that the optimal oracle complexity is $\textstyle \cO\left(\nicefrac{L \Delta}{\varepsilon} + \nicefrac{\sigma^2 L \Delta}{\varepsilon^2}\right)$
to find $\bar{x} \in \R^d$ such that ${\mathbb{E}}[\norm{\nabla f(\bar{x})}^2] \leq \varepsilon.$
It is attained by the vanilla \algname{SGD} method: $x^{k+1} = x^{k} - \gamma \nabla f(x^k; \xi^k),$ where $\xi^k$ are i.i.d. random samples, $\Delta \eqdef f(x^0) - f^*,$ $x^0 \in \R^d$ is a starting point, and $\gamma = \Theta\left(\min\{\nicefrac{1}{L},\nicefrac{\varepsilon}{L \sigma^2}\}\right)$ is a step size. In the convex setting 
(Assumption~\ref{ass:convex}), 
the optimal oracle complexity is
$\textstyle \Theta\left(\nicefrac{\sqrt{L} R}{\sqrt{\varepsilon}} + \nicefrac{\sigma^2 R^2}{\varepsilon^2}\right)$ \citep{lan2020first,nemirovskij1983problem} to find $\bar{x} \in \R^d$ such that ${\mathbb{E}}[f(\bar{x})] - f(x_*) \leq \varepsilon,$ where $R \eqdef \norm{x^{0} - x_*}.$ In the $\mu$--strongly convex setting, the optimal complexity $\textstyle \widetilde{\Theta}\left(\nicefrac{\sqrt{L}}{\sqrt{\mu}} + \nicefrac{\sigma^2}{\mu^2 \varepsilon}\right)$ is to find $\bar{x} \in \R^d$ such that $\mathbb E [\norm{\bar{x} - x_*}^2] \leq \varepsilon$ (up to logarithmic factors).

\textbf{Oracle complexity with many workers.} Many works discovered oracle complexities with multiple workers.
\citet{arjevani2015communication,scaman2017optimal} analyze the heterogeneous convex setting and provide lower bounds when the workers are synchronized. \citet{lu2021optimal} consider the similar setup but in the nonconvex setting. \citet{arjevani2020tight} analyze settings where methods receive delayed stochastic gradients. \citet{woodworth2018graph} provide lower bounds for parallel setups with intermittent communications and delayed updates. The primary limitation of these results is the assumption that all workers have consistent computational performance, without accounting for individual delays, random lags, or variations in performance over time.

\textbf{Time complexity.} To address the problem of analyzing methods with workers having different computation capabilities and performances, \citet{mishchenko2022asynchronous} proposed to consider the \emph{fixed computation model}. In this model, it is assumed that
\begin{theorembox}
\centering
worker $i$ requires at most $\tau_i$ seconds to calculate one stochastic gradient. 
\end{theorembox}
Without loss of generality, we assume that the times are sorted: $\tau_1 \leq \dots \leq \tau_n.$ One of the most popular methods is \algname{Asynchronous SGD} \citep{lian2015asynchronous,zhang2015staleness,feyzmahdavian2016asynchronous,sra2016adadelay,dutta2018slow,stich2020error,wu2022delay,islamov2024asgrad,maranjyan2025ringmaster,maranjyan2026ringleader}. In the \emph{homogeneous setting}, \citet{mishchenko2022asynchronous,koloskova2022sharper,cohen2021asynchronous} showed that \mbox{\algname{Asynchronous SGD}} and \algname{Picky SGD} can provably improve the performance of the synchronized \algname{Minibatch SGD} method that does the steps 
$x^{k+1} = x^{k} - \nicefrac{\gamma }{n} \sum_{i=1}^n \nabla f(x^k; \xi^k_i),$ where $\gamma$ is a stepsize, $\xi^k_i$ are i.i.d. samples, and $\nabla f(x^k; \xi^k_i)$ are calculated in parallel in $n$ workers. \algname{Minibatch SGD} requires $\cO\left(\nicefrac{L \Delta}{\varepsilon} + \nicefrac{\sigma^2 L \Delta}{n \varepsilon^2}\right)$ iterations \citep{cotter2011better, goyal2017accurate, gower2019sgd} in the nonconvex setting. Moreover, \algname{Minibatch SGD} converges after
$
    \textstyle \cO\left(\max_{i \in [n]} \tau_i \times \left(\nicefrac{L \Delta}{\varepsilon} + \nicefrac{\sigma^2 L \Delta}{n \varepsilon^2}\right)\right)
$ seconds because it waits for the slowest worker with $\max_{i \in [n]} \tau_i$ in every iteration. \mbox{\algname{Asynchronous SGD}}, methods with the step $x^{k+1} = x^{k} - \nicefrac{\gamma^k}{n} \sum_{i=1}^n \nabla f(x^{k - \delta_k}; \xi^{k-\delta_k}_i)$ and $\delta_k$--delayed stochastic gradients, improve this time complexity to $\textstyle \cO(\left(\nicefrac{1}{n} \sum_{i=1}^n \nicefrac{1}{\tau_i}\right)^{-1}\left(\nicefrac{L \Delta}{\varepsilon} + \nicefrac{\sigma^2 L \Delta}{n \varepsilon^2}\right)).$ 

\textbf{Optimal time complexities in the heterogeneous and homogeneous settings.} Surprisingly, the time complexity can be further improved. In the nonconvex setup (under Assumptions~\ref{ass:global_lipschitz_constant}, and \ref{ass:stochastic_variance_bounded}), \citet{tyurin2023optimal} formalized the notion of time complexities and showed that the \emph{optimal time complexity} is
\begin{align}
  \label{eq:comp_homog}
  \squeeze T_{\textnormal{homog}} \eqdef \Theta\left(\min\limits_{m \in [n]} \left[\left(\frac{1}{m} \sum\limits_{i=1}^{m} \frac{1}{\tau_{i}}\right)^{-1} \left(\frac{L \Delta}{\varepsilon} + \frac{\sigma^2 L \Delta}{m \varepsilon^2}\right)\right]\right)
\end{align}
seconds \emph{in the homogeneous setup} to find an $\varepsilon$--stationary point, achieved by the \algname{Rennala SGD} method, where, without loss of generality, the times are sorted: $\tau_{1} \leq \dots \leq \tau_{n}.$ \emph{In the heterogeneous setup}, the optimal time complexity is
\begin{align}
  \label{eq:comp_heter}
  \squeeze T_{\textnormal{heter}} \eqdef \Theta\left(\tau_n \frac{L \Delta}{\varepsilon} + \left(\frac{1}{n} \sum\limits_{i=1}^{n} \tau_{i}\right) \frac{\sigma^2 L \Delta}{n \varepsilon^2}\right),
\end{align}
achieved by the \algname{Malenia SGD} method.

\textbf{A unifying perspective on \algname{Rennala SGD} and \algname{Malenia SGD}.}
Let us look closer to the \algname{Rennala SGD} and \algname{Malenia SGD} methods (see Algorithm~\ref{alg:alg_server_heterog}) that achieve the optimal time complexities \eqref{eq:comp_homog} and \eqref{eq:comp_heter} in the homogeneous and heterogeneous setting, accordingly. 
We now recall how they work. In every iteration, \algname{Rennala SGD} and \algname{Malenia SGD} ask all workers to calculate stochastic gradients asynchronously at \emph{the same iterate} $x^k$. Assume that worker $i$ has calculated $B_i^k$ stochastic gradients for all $i \in [n]$ at the iteration $k.$ Then the methods do the steps
\begin{algorithm}[t]
\caption{\algname{Malenia SGD} or \mbox{\algname{Rennala SGD}} when $w_i^k = \nicefrac{1}{B_i^k}$ or $w_i^k = \nicefrac{n}{\sum_{i=1}^n B_i^k},$ respectively}
  \label{alg:alg_server_heterog}
  \begin{algorithmic}[1]
  \STATE \textbf{Input:} point $x^0$, stepsize $\gamma$, parameter $S,$  weights $\{w_i^k\}$
  \FOR{$k = 0, 1, \dots, K - 1$}
  \STATE Ask all workers to calculate stochastic gradients at $x^k$; init $g^k_i = 0$ and $B_i^k = 0$ $\forall i \in [n]$
  \WHILE{$\left(\frac{1}{n} \sum_{i=1}^{n} (w_i^k)^2 B_i^k\right)^{-1} \leq \frac{S}{n}$}
  \STATE Wait for the next worker $j$
  \STATE Update $B_j^k = B_j^k + 1$
  \STATE Receive stochastic gradient $\nabla f_j(x^k;\xi^k_{j,B_j^k})$ and update $g^k_j = g^k_j + \nabla f_j(x^k;\xi^k_{j,B_j^k})$
  \STATE Ask this worker to calculate a stochastic gradient at $x^k$
  \ENDWHILE
  \STATE $g^k_{w} \eqdef \frac{1}{n} \sum_{i=1}^{n} w_i^k g^k_i =  \frac{1}{n} \sum_{i=1}^{n} w_i^k \sum_{j=1}^{B_i^k} \nabla f_i(x^k;\xi^k_{ij})$
  \STATE $x^{k+1} = x^k - \gamma g^k_{w}$
  \STATE Stop all the workers' calculations (or ignore the unfinished calculations in the next iterations)
  \ENDFOR
  \end{algorithmic}
\end{algorithm}
\begin{equation}
\begin{aligned}
  & \squeeze x^{k+1} = x^{k} - \gamma g^k_{\textnormal{\algname{R}}}, \quad g^k_{\textnormal{\algname{R}}} \eqdef \frac{1}{\sum_{i=1}^n B_i^k} \sum\limits_{i=1}^{n} \sum\limits_{j=1}^{B_i^k} \nabla f_i(x^k;\xi^k_{ij}) 
\end{aligned}
\tag{\algname{Rennala SGD}}
\label{eq:rennala}
\end{equation}
and 
\begin{equation}
\begin{aligned}
  &\squeeze x^{k+1} = x^{k} - \gamma g^k_{\textnormal{\algname{M}}}, \quad g^k_{\textnormal{\algname{M}}} \eqdef \frac{1}{n} \sum\limits_{i=1}^{n} \frac{1}{B_i^k}  \sum\limits_{j=1}^{B_i^k} \nabla f_i(x^k;\xi^k_{ij}),
\end{aligned}
\tag{\algname{Malenia SGD}}
\label{eq:malenia}
\end{equation}
accordingly. \algname{Rennala SGD} and \algname{Malenia SGD} ask all workers calculating stochastic gradients until $\frac{1}{n}\sum_{i=1}^n B_i^k > \nicefrac{S}{n}$ and $\left(\frac{1}{n} \sum_{i=1}^n \nicefrac{1}{B_i^k}\right)^{-1} > \nicefrac{S}{n}$ correspondingly, where $S$ is a parameter. Hence, both methods asynchronously collect and aggregate stochastic gradients to compute $g^k_{\textnormal{\algname{R}}}$ and $g^k_{\textnormal{\algname{M}}},$ and then perform a descent step.
However, the way the methods aggregate is both different and important. It turns out the variance of the \algname{Rennala SGD}'s update is smaller. Indeed, one can easily show that
\begin{align*}
  \squeeze \Exp{\norm{g^k_{\textnormal{\algname{R}}} - \Exp{g^k_{\textnormal{\algname{R}}}}}^2} \leq 
  \frac{\sigma^2}{n} \left(\frac{1}{n} \sum\limits_{i=1}^n B_i^k\right)^{-1} \textnormal{ and } \Exp{\norm{g^k_{\textnormal{\algname{M}}} - \Exp{g^k_{\textnormal{\algname{M}}}}}^2} \leq \frac{\sigma^2}{n} \left(\frac{n}{\sum_{i=1}^n \frac{1}{B_i^k}}\right)^{-1}.
\end{align*}
Thus, the variance of \ref{eq:rennala} improves with the \emph{arithmetic mean} of $B_i^k,$ while the variance of \ref{eq:malenia} improves with the \emph{harmonic mean} of $B_i^k,$ which can be much smaller. Why wouldn't we use \ref{eq:rennala} in all scenarios if it is better? Because $g^k_{\textnormal{\algname{R}}}$ is biased if $\{f_i\}$ are non-homogeneous. In general, $\Exp{g^k_{\textnormal{\algname{R}}}} \neq \nabla f(x^k),$ while it is always true that $\Exp{g^k_{\textnormal{\algname{M}}}} = \nabla f(x^k).$
Both methods can be generalized into
  $\squeeze x^{k+1} = x^{k} - \gamma g^k_{w},$ $g^k_{w} \eqdef \frac{1}{n} \sum_{i=1}^{n} w_i^k \sum_{j=1}^{B_i^k} \nabla f_i(x^k;\xi^k_{ij}),$
where the weights $\{w_i^k\}$ are free parameters. If we take $w_i^k = \nicefrac{n}{\sum_{i=1}^n B_i^k}$ for all $i \in [n],$ we get \ref{eq:rennala} with small variance. If we take $w_i^k = \nicefrac{1}{B_i^k},$ we get \ref{eq:malenia} with high variance but with an unbiased estimator. The weights enable interpolation between the methods. 

\textbf{Difference between the two settings.} Using the inequality of arithmetic and harmonic means, one can easily show that $T_{\textnormal{homog}} \leq T_{\textnormal{heter}}$ (ignoring constant factors). At the same time, the gap between the complexities can be arbitrarily huge. Indeed, when the performance $\tau_1$ of the fastest worker tends to $0,$ one can easily show that $T_{\textnormal{homog}} \to 0$ and $T_{\textnormal{heter}} \to \Theta\left(\nicefrac{\tau_n L \Delta}{\varepsilon} + \left(\frac{1}{n} \sum_{{\color{red} i = 2}}^{n} \tau_{i}\right) \nicefrac{\sigma^2 L \Delta}{n \varepsilon^2}\right),$ and $T_{\textnormal{heter}}$ improves by at most $\sum_{i=1}^{n} \tau_{i} / \sum_{i=2}^{n} \tau_{i} \leq 2.$ While the improvement in the homogeneous setup is $\infty.$ Consider another example when the performance $\tau_n$ of the slowest worker (straggler) tends to $\infty.$ Then $T_{\textnormal{heter}} \to \infty$ and $T_{\textnormal{homog}} \to \Theta(\min_{m \in [{\color{red} n - 1}]} [\left(\nicefrac{1}{m} \sum_{i=1}^{m} \nicefrac{1}{\tau_{i}}\right)^{-1} \left(\nicefrac{L \Delta}{\varepsilon} + \nicefrac{\sigma^2 L \Delta}{m \varepsilon^2}\right)]),$ so the complexity $T_{\textnormal{homog}}$ is robust to stragglers unlike $T_{\textnormal{heter}}.$

\textbf{Arbitrarily computation dynamics.} The previous discussion explain that a significant gap appears between homogeneous and heterogeneous problems under the fixed computation model. This ``arithmetic mean vs harmonic mean gap'' was also observed in \citep{tyurin2024tight}, where the author generalizes the fixed computation model to the \emph{universal computation model}, 
accounting for potential disruptions caused by hardware or network delays, and any variations in computation speeds. 
For simplicity, in this work, we will continue working with the fixed computation model, but we also show how our final results translate to the universal computation model in Section~\ref{sec:arbitrarily}.

\textbf{Convex world.} When we want to find a point $\bar{x}$ such that $\Exp{f(\bar{x})} - f^* \leq \varepsilon$ in the convex setup, the gap is similar. The optimal time complexity \emph{in the homogeneous setup} is 
\begin{align}
  \label{eq:comp_homog_convex}
  \squeeze \Theta\left(\min\limits_{m \in [n]} \left[\left(\frac{1}{m} \sum\limits_{i=1}^{m} \frac{1}{\tau_{i}}\right)^{-1} \left(\frac{\sqrt{L} R}{\sqrt{\varepsilon}} + \frac{\sigma^2 R^2}{m \varepsilon^2}\right)\right]\right)
\end{align}
seconds \citep{tyurin2023optimal}. While the optimal time complexity \emph{in the heterogeneous setup} is
\begin{align}
  \label{eq:comp_heter_convex}
  \squeeze \Theta\left(\tau_n \frac{\sqrt{L} R}{\sqrt{\varepsilon}} + \left(\frac{1}{n} \sum\limits_{i=1}^{n} \tau_{i}\right) \frac{\sigma^2 R^2}{n \varepsilon^2}\right)
\end{align}
seconds under Assumptions~\ref{ass:global_lipschitz_constant}, \ref{ass:convex}, and \ref{ass:stochastic_variance_bounded} (\textbf{our new contribution}, Theorem~\ref{thm:first_lower_bound}; the final puzzle piece needed to reveal the systematic gap between the two settings). Both complexities are achieved by the accelerated versions of \algname{Rennala SGD} and \algname{Malenia SGD} accordingly.

\textbf{Strongly convex world.} Assume additionally that the function $f$ is $\mu$--strongly convex.
Using reduction \citep{woodworth2016tight}, up to logarithmic factors, we can obtain the optimal time complexity 
\begin{align}
  \label{eq:comp_homog_convex_strongly}
  \squeeze \widetilde{\Theta}\left(\min\limits_{m \in [n]} \left[\left(\frac{1}{m} \sum\limits_{i=1}^{m} \frac{1}{\tau_{i}}\right)^{-1} \left(\sqrt{\frac{L}{\mu}} + \frac{\sigma^2}{m \varepsilon \mu}\right)\right]\right)
\end{align}
in the homogeneous setting and the optimal time complexity
\begin{align}
  \label{eq:comp_heter_convex_strongly}
  \squeeze \widetilde{\Theta}\left(\tau_n \sqrt{\frac{L}{\mu}} + \left(\frac{1}{n} \sum\limits_{i=1}^{n} \tau_{i}\right) \frac{\sigma^2}{n \varepsilon \mu}\right)
\end{align}
in the heterogeneous setting when we want to find a point $\bar{x}$ such that $\Exp{f(\bar{x})} - f^* \leq \varepsilon.$ 
Here we also observe a large gap between the settings. Note that the complexities \eqref{eq:comp_heter}, \eqref{eq:comp_heter_convex}, and \eqref{eq:comp_heter_convex_strongly} can only be improved under additional assumptions because they are optimal.
\begin{theorembox}
\begin{quote}
    \textbf{Main question:} Having the systematic gap between the homogeneous and heterogeneous setups, the goal of this work is to identify theoretical assumptions that are as weak as possible to improve the results of asynchronous methods in heterogeneous scenarios. Under which assumptions can we improve the dependence on the arithmetic mean of $\{\tau_i\}$ (see \eqref{eq:comp_heter}, \eqref{eq:comp_heter_convex}, and \eqref{eq:comp_heter_convex_strongly}) to the dependence on the harmonic mean of $\{\tau_i\}$ (see \eqref{eq:comp_homog}, \eqref{eq:comp_homog_convex}, and \eqref{eq:comp_homog_convex_strongly})?
    Right now, the only possible way is to assume that the functions $\{f_i\}$ are equal---an assumption we clearly want to avoid in the heterogeneous setting. Is there any chance to relax this assumption?
\end{quote}
\end{theorembox}

\subsection{Contributions}
To the best of our knowledge, this is the first work to address the main question in any setting; our analysis considers the convex setting under standard Assumptions~\ref{ass:global_lipschitz_constant}, \ref{ass:lipschitz_constant}, \ref{ass:convex}, \ref{ass:stochastic_variance_bounded}, and \ref{ass:pl_global}.

\textbf{Analysis of first- and second-order similarity.} First, we consider the celebrated \emph{first- and second-order similarity} and, surprisingly, prove that even under these assumptions—no matter how close the functions $\{f_i\}$ are—\emph{any randomized algorithm} cannot converge before $\Omega\Big(\Big(\frac1n\sum\limits_{i=1}^n\tau_i\Big) \frac{\sigma^2}{n\mu^2\varepsilon}\Big)$ seconds for small $\varepsilon$ (Theorem~\ref{thm:divergence_first}). Thus, it is infeasible to break the dependence on the arithmetic mean of $\{\tau_i\}$ under these assumptions.

\textbf{Investigation of the interpolation assumption.} Inspired by Theorem~\ref{thm:divergence_first}, which provides a construction with local functions having different minimizers, we decided to go in another direction and consider the \emph{interpolation} assumption. Thus, we introduce two additional assumptions, strong interpolation and the local Polyak-\L ojasiewicz condition, and prove that it is impossible to drop either of these assumptions for improvement (Theorems~\ref{thm:divergence_2} and \ref{thm:divergence_3}).

\textbf{Bridging the gap.} By identifying this minimal set of assumptions, we derive a new time complexity result that matches the dependence on worker computation times in the best-known bound in the \emph{homogeneous} setting (Theorem~\ref{thm:main_theorem}), but without requiring the functions $f_i$ to be identical. Our theoretical results are validated numerically in Section~\ref{sec:exp}.

\emph{To bridge the gap in Section~\ref{sec:main}, we need to introduce Assumptions~\ref{ass:inter_strong} and \ref{ass:pl_condition}. However, our primary goal was to illustrate and prove that these assumptions are indeed necessary. Merely stating the assumptions might not be convincing; this is why the central part of our paper investigates different assumptions and shows that most of them do not allow us to bridge the gap. 
While previous work noted the existence of the gap, our contribution goes further by systematically investigating which assumptions are sufficient and which are insufficient to eliminate it.}
\section{First-Order and Second-Order Similarity Don't Help}
\label{sec:first_second}
The main problem with the \emph{arithmetic mean dependence} in the heterogeneous setting is that this setting considers a worst-case scenario with arbitrarily heterogeneous functions.
Due to the fact that \algname{Malenia SGD} is optimal, we have to introduce \emph{assumptions} to obtain faster convergence. One of the most popular assumptions in the literature is \emph{first-order and second-order similarity of the functions} \citep{arjevani2015communication,szlendak2021permutation,mishchenko2022asynchronous}:
\begin{assumption}[First-Order Similarity]
    \label{ass:first}
    The functions $f_i$ satisfy 
        $\max_{i,j \in [n]}\norm{\nabla f_i(x) - \nabla f_j(x)}^2 \leq \delta_1$
    for all $x \in \R^d$ for some $\delta_1 \geq 0$. It implies $\frac{1}{n} \sum_{i=1}^{n}\norm{\nabla f_i(x) - \nabla f(x)}^2 \leq \delta_1$ for all $x \in \R^d.$
\end{assumption}
\begin{assumption}[Second-Order Similarity]
    \label{ass:second}
    The functions $f_i$ satisfy 
        $\max_{i,j \in [n]} \norm{\nabla^2 f_i(x) - \nabla^2 f_j(x)}^2 \leq \delta_2$
    for all $x \in \R^d$ for some $\delta_2 \geq 0$. It implies $\frac{1}{n} \sum_{i=1}^{n}\norm{\nabla^2 f_i(x) - \nabla^2 f(x)}^2 \leq \delta_2$ for all $x \in \R^d.$
\end{assumption}
One might expect that when both $\delta_1$ and $\delta_2$ are small, it would be possible to exploit the similarity and design a method with smaller variance and better dependence on $\{\tau_i\}.$
Surprisingly, this is not the case: for any $\delta_1 > 0$ and $\delta_2 \geq 0$, one can construct a problem for which the convergence speed of \algname{Malenia SGD} (Theorem~\ref{thm:malenia}) cannot be improved, up to logarithmic factors, in small-$\varepsilon$ regimes:
\begin{restatable}[Lower Bound]{theorem}{FIRSTORDER}
  \label{thm:divergence_first}
  Consider stochastic gradients $\nabla f_i(x;\xi_i) = \nabla f_i(x) + \xi_i e_i$ with $\xi_i \sim \mathcal{N}(0,\sigma^2)$ for all $i \in [n]$, and $x \in \R^n.$ Consider any randomized algorithm that has access only to the stochastic gradients (randomized method), which starts at $x^0 = 0$, under the fixed computation model and any $R, \mu,\beta,\sigma, \varepsilon > 0$ such that $0 < \varepsilon \leq \frac{c\beta^2}{\mu^2n^2}$ and $R^2 \geq \frac{\beta^2}{\mu^2 n},$ where $c > 0$ is a universal constant. For any time budget
    $\textstyle t \leq c_0 \left(\frac1n\sum\limits_{i=1}^n\tau_i\right)
        \frac{\sigma^2}
             {n\mu^2\varepsilon},$
  where $c_0$ is a universal constant, there exist $f_i(x) \,:\, \R^n \to \R$ such that
  $f_i(x) = \frac{\mu}{2} \norm{x}^2 - \beta \varphi_i \inp{x}{e_i}$ and  
  $\varphi_i \in [-1, 1].$ Assumptions~\ref{ass:global_lipschitz_constant}, \ref{ass:lipschitz_constant}, \ref{ass:convex}, \ref{ass:stochastic_variance_bounded}, and \ref{ass:pl_global} hold. Moreover, Assumption~\ref{ass:first} (the first-order similarity) is satisfied with $\delta_1 = 2 \beta^2,$ and Assumption~\ref{ass:second} (the second-order similarity) is satisfied with $\delta_2 = 0,$ $\norm{x^0 - x^*}^2 \leq R^2,$ and the method cannot produce a point $\bar{x}$ such that $\Exp{\norm{\bar{x}-x^*}^2}\leq\varepsilon$ within $t$ seconds, where $x^*$ is the minimizer of $f.$
\end{restatable}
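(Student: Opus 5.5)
The argument reduces the problem to $n$ independent one-dimensional Gaussian mean-estimation problems, one per coordinate. With $f_i(x) = \frac{\mu}{2}\norm{x}^2 - \beta\varphi_i\inp{x}{e_i}$ we have $f(x) = \frac{\mu}{2}\norm{x}^2 - \frac{\beta}{n}\inp{x}{\varphi}$, so the minimizer is $x^* = \frac{\beta}{\mu n}\varphi$, i.e.\ $x^*_i = \frac{\beta\varphi_i}{\mu n}$.

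\textbf{Step 1: the assumptions.} These are routine. Each $f_i$ is $\mu$-smooth and $\mu$-strongly convex, which gives Assumptions~\ref{ass:global_lipschitz_constant}, \ref{ass:lipschitz_constant}, \ref{ass:convex} and \ref{ass:pl_global}. The noise $\xi_i e_i$ has variance $\sigma^2$, so Assumption~\ref{ass:stochastic_variance_bounded} holds. All Hessians equal $\mu\mI$, hence $\delta_2 = 0$. Finally $\norm{\nabla f_i - \nabla f_j}^2 = \beta^2(\varphi_i^2 + \varphi_j^2) \le 2\beta^2$, and $\norm{x^*}^2 \le \frac{\beta^2}{\mu^2 n} \le R^2$.

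\textbf{Step 2: information structure.} Worker $i$ returns $\mu x - \beta\varphi_i e_i + \xi_i e_i$. Since $x$ is known to the algorithm, each call reveals exactly one sample $y = -\beta\varphi_i + \xi_i \sim \mathcal{N}(-\beta\varphi_i,\sigma^2)$ about $\varphi_i$, and nothing about any $\varphi_j$ with $j \ne i$. Under the fixed computation model, worker $i$ completes at most $\lfloor t/\tau_i\rfloor$ calls by time $t$. So within $t$ seconds the algorithm holds at most $N_i = \lfloor t/\tau_i\rfloor$ independent Gaussian samples of $\beta\varphi_i$, and its output $\bar x_i$ is a (randomized) estimator of $\frac{\beta\varphi_i}{\mu n}$ from these samples. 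Any extra randomness or adaptivity only reduces the available samples to at most $N_i$, and more samples cannot hurt a Bayes risk, so it suffices to bound the Bayes risk with exactly $N_i$ samples.

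\textbf{Step 3: Bayesian lower bound.} I would put a prior on $\varphi_i$: either uniform on $\{\pm\alpha\}$ with Le Cam's two-point bound, or, more cleanly, a prior on $[-1,1]$ with a Van Trees inequality. The latter gives, per coordinate,
\[
\mathbb{E}\big[(\bar x_i - x^*_i)^2\big] \;\gtrsim\; \frac{1}{\mu^2 n^2}\min\Big\{1,\ \frac{\sigma^2}{N_i}\Big\}\cdot\beta^2\cdot\frac{1}{\beta^2}
\;=\; \min\Big\{\frac{\beta^2}{\mu^2 n^2},\ \frac{\sigma^2\tau_i}{\mu^2 n^2 t}\Big\},
\]
using that the Fisher information of $N_i$ samples for $\beta\varphi_i$ is $N_i/\sigma^2$. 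Summing over $i$ and using the hypothesis $\varepsilon \le c\beta^2/(\mu^2 n^2)$, I split into two cases. If some coordinate is saturated, its term alone is $\ge \beta^2/(\mu^2 n^2) > \varepsilon$. Otherwise the total is $\ge \frac{\sigma^2}{\mu^2 n^2 t}\sum_i \tau_i$, which exceeds $\varepsilon$ whenever $t \le c_0\big(\frac1n\sum_i\tau_i\big)\frac{\sigma^2}{n\mu^2\varepsilon}$. Since the Bayes risk exceeds $\varepsilon$, some fixed $\varphi\in[-1,1]^n$ does as well, which yields the existence claim.

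\textbf{Main obstacle.} The delicate step is making Step 2 rigorous. Because the algorithm is adaptive and randomized, the query points and the time allocation may depend on earlier outputs. I would handle this by simulating all workers' sample streams in advance, as independent sequences, and arguing that the algorithm's output is a measurable function of the first $N_i$ samples of each stream together with independent randomness. The per-coordinate Van Trees bound then applies to the joint posterior, because the priors and the data factorize across coordinates. A further detail is the constant in the prior (for instance the cosine-squared density on $[-1,1]$) so that the $\min\{\cdot,\cdot\}$ form holds with universal constants, which fixes $c$ and $c_0$.
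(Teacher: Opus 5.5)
Your proposal is correct and is essentially the paper's proof. Both reduce to $n$ independent Gaussian mean-estimation problems with at most $\lfloor t/\tau_i\rfloor$ samples for coordinate $i$, apply a per-coordinate minimax bound of order $\frac{\beta^2}{\mu^2 n^2}\min\{1,\frac{\sigma^2\tau_i}{\beta^2 t}\}$, and sum over $i$. The differences are only technical: the paper cites a classical Gaussian minimax bound instead of using van Trees or Le Cam, and it uses $\varepsilon\le c\beta^2/(\mu^2n^2)$ to make every term unsaturated rather than your case split; your stream-coupling treatment of adaptivity makes explicit a step the paper leaves implicit, and your displayed intermediate expression should read $\frac{\beta^2}{\mu^2n^2}\min\{1,\frac{\sigma^2}{\beta^2 N_i}\}$.
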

Hence, for any small $\delta_1 > 0$ and $\delta_2 \geq 0$, the convergence speed cannot be improved over that of \algname{Malenia SGD} up to logarithmic factors in small-$\varepsilon$ regimes (compare to Theorem~\ref{thm:malenia}). Due to the construction in Theorem~\ref{thm:divergence_first}, we can choose any $\beta > 0$, and hence any $\delta_1 > 0$. No matter how close the functions are to each other, the lower bound does not allow us to break the pessimistic time complexity. In view of this, additional assumptions about the first- and second-order similarity will not help to improve the time complexity of \algname{Malenia SGD}.
\begin{remark}
    For the construction in Theorem~\ref{thm:divergence_first}, we can also show that $\norm{\nabla f_i(x)}^2 \leq 2 \norm{\nabla f(x)}^2 + 2 \beta^2 d$ for all $i \in [n],$ which corresponds to the $\rho$--\emph{strong growth} condition when $\beta = 0$ and $\rho = 2$ \citep{schmidt2013fast}. Since Theorem~\ref{thm:divergence_first} holds for all $\beta > 0,$ we have proved the result for a ``slightly'' broader class of problems and have ``almost'' established that, even under the \emph{strong growth} condition, the convergence speed of \algname{Malenia SGD} cannot be improved up to logarithmic factors. Whether a similar result holds for the class of problems satisfying $\max_{i \in [n]}\norm{\nabla f_i(x)}^2 \leq 2 \norm{\nabla f(x)}^2$ for all $x \in \R^d$ remains an important open research question.
\end{remark}
\begin{takeawaybox}
   \label{ta:first_sceond}
   Even with first-order and second-order similarity, for any randomized algorithm, there is still no hope of improving upon the convergence speed of \algname{Malenia SGD}, up to logarithmic factors, in small-$\varepsilon$ regimes.
\end{takeawaybox}

\section{Understanding the Gap via Interpolation Assumptions}
\label{sec:challenges}
Looking at Takeaway~\ref{ta:first_sceond}, we see that a different similarity assumption is required to close the gap between the heterogeneous and homogeneous results. 
\begin{table*}[!t]
    \caption{The summary of our results and the time complexities (up to logarithmic factors) to get a point $\bar{x}$ such that $\mathbb{E}[\norm{\bar{x} - x_*}^2] \leq \varepsilon$ under the \emph{fixed computation model} (worker $i$ requires at most $\tau_i$ seconds to calculate one stochastic gradient; $\tau_1 \leq \dots \leq \tau_n$) and Assumptions~\ref{ass:global_lipschitz_constant}, \ref{ass:lipschitz_constant}, \ref{ass:convex}, \ref{ass:stochastic_variance_bounded}, and \ref{ass:pl_global}, where $\bar{x}_*$ is the closest solution to $\bar{x}.$ The table compares methods in the fully heterogeneous setting and lists the extra assumptions the methods require to work.}
    \label{table:complexities}
    \centering 
    \begin{adjustbox}{width=1.0\columnwidth,center}  
    \begin{threeparttable}  
      \begin{tabular}[t]{cccccc}
  \toprule
     \bf  Method & \bf Time Complexity Guarantees & Additional Assumptions \\
       \midrule
       \makecell{\algname{Minibatch SGD}} & $\tau_n \left(\frac{L}{\mu} + \frac{\sigma^2}{n \varepsilon \mu^2}\right)$ & --- \\
       \midrule
       \makecell{\algname{Asynchronous SGD} \\ \citep{mishchenko2022asynchronous}} & $\left(\frac{1}{n} \sum\limits_{i=1}^{n} \frac{1}{\tau_{i}}\right)^{-1} \left(\frac{L}{\mu} + \frac{\sigma^2}{n \varepsilon \mu^2}\right)$ & \makecell{$\{f_i\}$ are equal \\ $\mu$--strong convexity} \\
       \midrule
       \makecell{\algname{Malenia SGD} \\ \citep{tyurin2023optimal} \\ (Theorem~\ref{thm:malenia})} & $\tau_n \frac{L}{\mu} + \left(\frac{1}{n} \sum\limits_{i=1}^{n} \tau_{i}\right) \frac{\sigma^2}{n \varepsilon \mu^2}$ & --- \\
       \midrule
       \makecell{\algname{Rennala SGD} \\ \citep{tyurin2023optimal} \\ (Theorem~\ref{thm:rennala})} & $\min\limits_{m \in [n]} \left[\left(\frac{1}{m} \sum\limits_{i=1}^{m} \frac{1}{\tau_{i}}\right)^{-1} \left(\frac{L}{\mu} + \frac{\sigma^2}{m \varepsilon \mu^2}\right)\right]$ & $\{f_i\}$ are equal \\
   \toprule
    \multicolumn{3}{c}{\makecell{\bf Lower Bounds (new results)}} \\
      \toprule
       \multicolumn{3}{c}{\makecell{Under the first-order and second-order similarity, the following results state that \\ it is infeasible to improve \algname{Malenia SGD} in small-$\varepsilon$ regimes:}} \\
       \midrule
       \makecell{Any randomized method \\ (Theorem~\ref{thm:divergence_first})} & $\geq \left(\frac{1}{n} \sum\limits_{i=1}^{n} \tau_{i}\right) \frac{\sigma^2}{n \varepsilon \mu^2}$ & \makecell{Assumptions~\ref{ass:first} and \ref{ass:second} \\ (first-order and second-order similarity don't help)} \\
      \toprule
       \multicolumn{3}{c}{\makecell{Under weak interpolation, the following result states that it is infeasible to improve \algname{Malenia SGD} in small-$\varepsilon$ regimes:}} \\
       \midrule
       \makecell{Any randomized method \\ (Theorem~\ref{thm:allneeded})} & $\geq \left(\frac{1}{n} \sum\limits_{i=1}^{n} \tau_{i}\right) \frac{\sigma^2}{n \varepsilon \mu^2}$ & \makecell{Assumption~\ref{ass:inter}} \\
      \toprule
       \multicolumn{3}{c}{\makecell{The following results state that any randomized method can not improve \algname{Malenia SGD} for small $\varepsilon$ if we discard Assumption~\ref{ass:inter_strong} or \ref{ass:pl_condition}:}} \\
       \midrule
       \makecell{Any randomized method \\ (Theorem~\ref{thm:divergence_2})} & $\geq \left(\frac{1}{n} \sum\limits_{i=1}^{n} \tau_{i}\right) \frac{\sigma^2}{n \varepsilon \mu^2}$ & \makecell{Assumptions~\ref{ass:inter} and \ref{ass:pl_condition} \\ (weak interpolation is not enough)} \\
       \midrule
       \makecell{Any randomized method \\ (Theorem~\ref{thm:divergence_3})} & $\geq \left(\frac{1}{n} \sum\limits_{i=1}^{n} \tau_{i}\right) \frac{\sigma^2}{n \varepsilon \mu^2}$ & \makecell{Assumption~\ref{ass:inter_strong}} \\
      \toprule
       \multicolumn{3}{c}{\bf Upper Bound (new result)} \\
       \toprule
       \multicolumn{3}{c}{\makecell{The following results state that under Assumption~\ref{ass:inter_strong} and \ref{ass:pl_condition} it is possible to improve \algname{Malenia SGD}:}} \\
       \midrule
       \makecell{\algname{Rennala SGD} \\ (Theorem~\ref{thm:main_theorem})} & $\min\limits_{m \in [n]} \left[\left(\frac{1}{m} \sum\limits_{i=1}^{m} \frac{1}{\tau_{i}}\right)^{-1} \left(\frac{L_{\max}}{\mu} + \frac{\sigma^2}{m \varepsilon \mu^2}\right)\right]$ & \makecell{Assumptions~\ref{ass:inter_strong} and \ref{ass:pl_condition} \\ (weaker than the equality of functions $\{f_i\}$)} \\
      \bottomrule
      \end{tabular}
    \end{threeparttable}
    \end{adjustbox}
  \end{table*}
Recall Theorem~\ref{thm:divergence_first}. The local minima of the functions ${f_i}$ are not the same. This motivates us to explore an alternative assumption known as the \emph{interpolation} assumption \citep{vaswani2019painless}. This assumption provides another way to capture the similarity among the functions $f_i$ by requiring that they share the same set of minimizers as the function $f$.
\begin{assumption}[Weak Interpolation]
    \label{ass:inter}
    If $x^*$ is a minimizer of $f$, that is, $\nabla f(x^*) = 0$, then $x^*$ is also a minimizer of each $f_i$ for all $i \in [n].$
\end{assumption}
Interpolation is a property of the solutions of ${f_i},$ whereas the heterogeneity assumptions, Assumptions~\ref{ass:first} and~\ref{ass:second}, concern the gradients and Hessians. These are different characteristics of ${f_i}$ (see Remark~\ref{rmk:new}).
Assumption~\ref{ass:inter} is considered practical in modern optimization literature, as there is evidence that it holds for large deep learning models \citep{zou2019improved,zhang2021understanding}. However, as we show next, this assumption alone is not sufficient to achieve improved time complexity, leading to yet another pessimistic result:
\begin{restatable}[Lower Bound]{theorem}{ALLNEEDED}
  \label{thm:allneeded}
  Consider any randomized method under the fixed computation model and assume that $n \geq 2.$ Let us fix any $\varepsilon, L_{\max}, R, \mu, \sigma^2 > 0$ such that $\mu < L_{\max} / (2 n),$ $\varepsilon < 0.01,$ and $R > 10.$ For any time budget $\textstyle t \leq c_0 \left(\frac{1}{n}\sum\limits_{i=1}^n \tau_i\right) \frac{\sigma^2}{\varepsilon n \mu^2},$
  where $c_0$ is a universal constant, there exist functions $\{f_i\}$ and stochastic gradients $\{\nabla f_i(\cdot;\cdot)\}$ such that $\{f_i\}$ satisfy Assumptions~\ref{ass:lipschitz_constant}, \ref{ass:convex}, and \ref{ass:inter}, $f$ satisfies Assumptions~\ref{ass:global_lipschitz_constant} and \ref{ass:pl_global} with $L = L_{\max}$, $\{\nabla f_i(\cdot;\cdot)\}$ satisfy Assumption~\ref{ass:stochastic_variance_bounded} such that the method cannot find $\varepsilon$--solution in terms of distances to the solution set after $t$ seconds, when the method starts at a point in a distance less or equal to $R$ to the closest solution.
\end{restatable}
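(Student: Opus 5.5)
The plan is to use a separable construction in which each coordinate of the solution is hidden inside a single worker's local function. Weak interpolation will hold even though the local functions are only weakly curved, so the one-dimensional estimation problems decouple across workers. Specifically, I take $d = n$, a hidden vector $a \in \R^n$, and
\begin{align*}
  f_i(x) = \frac{L_{\max}}{2}\left(x_i - a_i\right)^2, \qquad \nabla f_i(x;\xi_i) = \nabla f_i(x) + \xi_i e_i, \quad \xi_i \sim \mathcal{N}(0,\sigma^2).
\end{align*}
The routine checks come first.
\begin{itemize}
  \item Each $f_i$ is convex and $L_{\max}$-smooth, and the noise is unbiased with variance $\sigma^2$.
  \item $f(x) = \frac{L_{\max}}{2n}\norm{x-a}^2$, so $f$ is $L_{\max}$-smooth (indeed $L_{\max}/n$-smooth) and satisfies the P\L{} condition with constant $L_{\max}/n \geq \mu$. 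Since P\L{} with a constant implies it with any smaller one, the given $\mu < L_{\max}/(2n)$ can be used.
  \item The unique minimizer of $f$ is $a$. Each $f_i$ is minimized on the whole hyperplane $\{x_i = a_i\}$, which contains $a$, so Assumption~\ref{ass:inter} holds.
\end{itemize}
Local P\L{} clearly fails for each $f_i$, and this failure is what the construction exploits. The parameter $\mu$ enters only through $L_{\max}$. I will choose $L_{\max}$ (up to the constant slack in $\mu < L_{\max}/(2n)$) so that $L_{\max}^2 \asymp n^2\mu^2$; this matches the target rate. If one insists that $L_{\max}$ is arbitrary, I would instead scale each $f_i$ to curvature $\Theta(n\mu)$ and use $L_{\max}$ only as an upper bound.

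The key observation is that coordinate $a_i$ can be learned only from worker $i$. Every answer of worker $i$, at any adaptively chosen query $x$, equals $L_{\max}(x_i - a_i) + \xi_i$ in coordinate $i$ and is known deterministically in all other coordinates. Each oracle call of worker $i$ is therefore an observation of $a_i$ with Gaussian noise of variance $\sigma^2/L_{\max}^2$, and the adaptivity of the query does not change its information content. Under the fixed computation model (taking each worker's time to be exactly $\tau_i$), within $t$ seconds worker $i$ returns at most $B_i \le \lfloor t/\tau_i \rfloor$ gradients.

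To lower-bound the error, I put an independent prior on the $a_i$. I would take a smooth prior supported on $[-r, r]$, such as a cosine-squared density, with $r$ chosen so that $n r^2 \le R^2$ and the start point $x^0 = 0$ lies within distance $R$ of the solution. The conditions $R>10$ and $\varepsilon<0.01$ are there to make the prior's Fisher information $O(1/r^2)$ negligible compared with the target accuracy scale; I would fix $r$ of order one and verify this. Then I apply the van Trees inequality coordinatewise, for any randomized adaptive method. This gives
\begin{align*}
\Exp{\norm{\bar x - a}^2} \;\ge\; \sum_{i=1}^n \frac{1}{L_{\max}^2 \Exp{B_i}/\sigma^2 + \mathcal{I}_{\text{prior}}} \;\gtrsim\; \sum_{i=1}^n \min\left\{ r^2, \frac{\sigma^2 \tau_i}{L_{\max}^2 t}\right\}.
\end{align*}
For the bound to survive adaptivity, I must check that the Fisher information about $a_i$ in the whole transcript is at most $\Exp{B_i}\,L_{\max}^2/\sigma^2$. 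This follows from the chain rule for Fisher information, since queries are measurable with respect to the past and the algorithm's independent randomness.

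The last step is optimization over the budget. If $t \le c_0 \left(\frac1n\sum_i \tau_i\right)\frac{\sigma^2}{\varepsilon n\mu^2}$ and $L_{\max}^2 \asymp n^2\mu^2$, the second term in the minimum sums to $\sum_i \sigma^2\tau_i/(L_{\max}^2 t) \gtrsim \varepsilon/c_0 > \varepsilon$ for small $c_0$. The first term is handled by $\varepsilon < 0.01 \lesssim r^2$, since capping any coordinate at $r^2$ already exceeds $\varepsilon$.

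I expect the main obstacle to be reconciling the parameters, more than the information-theoretic step. The worker curvature must be tied to $\mu$ ($L_{\max}\approx 2n\mu$) even though the statement lets $L_{\max}$ be fixed with only $\mu < L_{\max}/(2n)$. If this tuning is not allowed, I would replace the $f_i$ by $\frac{n\mu}{2}(x_i-a_i)^2$, which keeps smoothness $\le L_{\max}$ and the P\L{} constant $\mu$. I would also need to confirm that the distance to the solution set is computed correctly; here the solution set is the single point $a$.
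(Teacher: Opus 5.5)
\textbf{The genuine gap is the prior.} You give every coordinate the same support $[-r,r]$. The start point must be within distance $R$ of every solution $a$ in the family, which forces $nr^2\le R^2$. ``Fixing $r$ of order one'' is then impossible once $n>R^2$, and only $R>10$ is assumed. With $r\le R/\sqrt n$, your coordinatewise bound only gives $\sum_i\min\{r^2,b_i\}\ge\min\{R^2/n,\sum_i b_i\}$, where $b_i=\sigma^2\tau_i/(n^2\mu^2t)$. So the step ``capping any coordinate at $r^2$ already exceeds $\varepsilon$'' fails whenever $R^2/n$ is of order $\varepsilon$ or smaller.

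This is not slack in the analysis; your hard family is genuinely too easy. Take $\tau_n=1$ and $\tau_i=\delta\to0$ for $i<n$. An algorithm that outputs $0$ in coordinate $n$ (error at most $R^2/n$) and averages the fast workers' gradients in the other coordinates (total error $\to0$) solves every instance of your family within the budget. For example, $R=11$, $\varepsilon=0.005$, $n=10^5$ gives $R^2/n\approx 0.0012<\varepsilon$.

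The missing idea is to scale the prior radii with the computation times: $r_i^2=R^2\tau_i/S_\tau$ with $S_\tau=\sum_j\tau_j$, which is the paper's $a_i=R\sqrt{\tau_i/S_\tau}$. Then $\sum_i r_i^2=R^2$, and because $r_i^2$ and $b_i$ are both proportional to $\tau_i$,
\[
\min\{r_i^2,b_i\}=\frac{\tau_i}{S_\tau}\min\Bigl\{R^2,\frac{\sigma^2 S_\tau}{n^2\mu^2 t}\Bigr\}.
\]
The sum therefore equals $\min\{R^2,\sigma^2S_\tau/(n^2\mu^2t)\}$. Now $R^2>100$ handles the first branch for every $n$, and the budget condition handles the second. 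With cosine-squared priors on $[-r_i,r_i]$ (prior information $\pi^2/r_i^2$), your van Trees argument then goes through.

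Apart from this, once you use your fallback $f_i(x)=\frac{n\mu}{2}(x_i-a_i)^2$, your construction is exactly the paper's. The paper derives this theorem as a corollary of Theorem~\ref{thm:divergence_2}, which uses this family. Your coordinatewise van Trees inequality with the Fisher-information chain rule is a sound, arguably more careful, replacement for the paper's appeal to a Gaussian minimax bound, since it handles adaptive queries explicitly.

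The fallback is mandatory, not optional. $L_{\max}$ and $\mu$ are fixed in the statement subject only to $\mu<L_{\max}/(2n)$. With curvature $L_{\max}$, your bound becomes $\sigma^2S_\tau/(L_{\max}^2t)$, smaller than the target by the uncontrolled factor $n^2\mu^2/L_{\max}^2$.

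Your remark that local P\L{} fails is also wrong. We have $\norm{\nabla f_i(x)}^2=2n\mu\,(f_i(x)-f_i^*)$, so Assumption~\ref{ass:pl_condition} holds with constant $n\mu\ge\mu$. What the construction exploits is the failure of strong interpolation ($\arg\min f_i$ is a hyperplane), which is why the same family yields the stronger Theorem~\ref{thm:divergence_2}.
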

Thus, even under Assumption~\ref{ass:inter}, we can not improve the arithmetic mean dependence on $\{\tau_i\}.$ 
\begin{takeawaybox}
   \label{ta:interp}
   Using the weak interpolation assumption, which captures the similarity of the functions in a different way compared to first-order and second-order similarity, it is still infeasible to improve the pessimistic dependence on $\{\tau_i\}$ achieved by \algname{Malenia SGD} using any randomized algorithm.
\end{takeawaybox}
\subsection{Strong interpolation and local P\L\,condition are both required}
\label{sec:important_assumptions}
Once again, we need to go deeper and introduce additional assumptions to break the lower bound from Theorem~\ref{thm:allneeded}. To further investigate the problem, we now turn to two related assumptions.
\begin{assumption}[Strong Interpolation]
    \label{ass:inter_strong}
    For all $i \in [n],$ a point $x^*$ is a minimizer of $f$, that is, $\nabla f(x^*) = 0$, \emph{if and only if} it is also a minimizer of $f_i.$
\end{assumption}
This assumption is clearly stronger than the weak interpolation assumption since it requires all the functions to share the set of minimizers (see Remark~\ref{rmk:new2}).
\begin{assumption}[Local Polyak-\L ojasiewicz condition]
  \label{ass:pl_condition}
  There exists $\mu$ such that $\norm{\nabla f_i(x)}^2 \geq 2 \mu \left(f_i(x) - f_i^*\right)$ for all $x \in \R^d$ and for all $i \in [n],$ where $f_i^*$ is the finite optimal function value of $f_i.$
\end{assumption}
This assumption, unlike Assumption~\ref{ass:pl_global}, requires each function to satisfy P\L\,condition. 
\begin{remark}
\label{rmk:new}
The similarity and interpolation assumptions are neither disjoint nor does one imply the other. For example, consider $f_i(x)=\frac{1}{2}x^2+c_i(1-\cos x),$ $0\leq c_i<1.$ These functions have the same unique minimizer $x^\star=0$ and are strongly convex and smooth, and thus satisfy strong interpolation and the local PŁ condition. At the same time,
$
|f_i'(x)-f_j'(x)|=|c_i-c_j||\sin x|,$
$|f_i''(x)-f_j''(x)|=|c_i-c_j||\cos x|,
$
so the first- and second-order similarity assumptions also hold. Conversely, the construction in Theorem~\ref{thm:divergence_first} satisfies the similarity assumptions while interpolation fails. In the other direction, the functions $f_i(x)= a_i x^2 / 2,$$a_i>0,$
have the same minimizer and satisfy the local PŁ condition, whereas for $a_i\neq a_j$ their gradient difference $|(a_i-a_j)x|$ is unbounded. Thus, interpolation-type and similarity assumptions describe different, overlapping forms of heterogeneity.
\end{remark}
It turns out again that if we do not assume \emph{both} Assumption~\ref{ass:inter_strong} and Assumption~\ref{ass:pl_condition}, then it is infeasible for any randomized method to get a time complexity faster than in \algname{Malenia SGD} (Theorem~\ref{thm:malenia}) for $\varepsilon$ small enough. This statement is formalized in the following two theorems.

\begin{restatable}[Lower Bound]{theorem}{WEAKNEEDED}
  \label{thm:divergence_2}
  Consider any randomized method under the fixed computation model and assume that $n \geq 2.$ Let us fix any $\varepsilon, L_{\max}, R, \mu, \sigma^2 > 0$ such that $\mu < L_{\max} / (2 n),$ $\varepsilon < 0.01,$ and $R > 10.$ For any time budget $\textstyle t \leq c_0 \left(\frac{1}{n}\sum\limits_{i=1}^n \tau_i\right) \frac{\sigma^2}{\varepsilon n \mu^2},$
  where $c_0$ is a universal constant, there exist functions $\{f_i\}$ and stochastic gradients $\{\nabla f_i(\cdot;\cdot)\}$ such that $\{f_i\}$ satisfy Assumptions~\ref{ass:lipschitz_constant}, \ref{ass:convex}, \ref{ass:inter}, and \ref{ass:pl_condition} \textbf{(Assumption~\ref{ass:inter_strong} is not imposed and may or may not hold)}, $f$ satisfies Assumptions~\ref{ass:global_lipschitz_constant} and \ref{ass:pl_global} with $L = L_{\max}$, $\{\nabla f_i(\cdot;\cdot)\}$ satisfy Assumption~\ref{ass:stochastic_variance_bounded} such that the method cannot find $\varepsilon$--solution in terms of distances to the solution set after $t$ seconds, when the method starts at a point in a distance less or equal to $R$ to the closest solution.
\end{restatable}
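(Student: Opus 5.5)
We reduce to the same mechanism as Theorem~\ref{thm:divergence_first}: a family of hard instances indexed by a hidden sign per worker, where only worker $i$'s oracle carries information about its own sign, and the error forces the algorithm to have collected $\Omega(\sigma^2/(n\mu^2\varepsilon))$ samples from \emph{every} worker. Collecting $B$ samples from every worker takes at least $\max_i \tau_i B \ge (\frac{1}{n}\sum_i \tau_i) B$ seconds. Equivalently, collecting $B_i \gtrsim \sigma^2/(n\mu^2\varepsilon)$ samples from a constant fraction of workers costs $\Omega(\frac1n\sum\tau_i)\cdot\sigma^2/(n\mu^2\varepsilon)$ seconds, since the slow workers must participate.

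The difference from Theorem~\ref{thm:divergence_first} is that weak interpolation must hold, so the local minimizers cannot simply be shifted by $\varphi_i$. We use the freedom that weak interpolation allows $f_i$ to have minimizer sets strictly larger than that of $f$. Concretely, in $\R^n$ (or $\R^{n+1}$), set
\[
f_i(x)=\frac{\mu n}{2}\,\bigl(\langle x,e_i\rangle-\beta\varphi_i\bigr)^2,
\]
a convex quadratic depending on one coordinate only. Each $f_i$ satisfies the local P\L{} condition, because a one-dimensional quadratic is P\L{} in $\R^d$ with constant $\mu n$, and it is $L_i$-smooth with $L_i=\mu n<L_{\max}/2$. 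Its minimizer set is the hyperplane $\{x_i=\beta\varphi_i\}$. The average is $f(x)=\frac{\mu}{2}\|x-\beta\varphi\|^2$, which is $\mu$-strongly convex with unique minimizer $x^*=\beta\varphi$. That point lies on every hyperplane, so weak interpolation holds. Strong interpolation fails, which is allowed. Global smoothness holds with $L=\mu\le L_{\max}$, and we may inflate the constants to get $L=L_{\max}$ via an extra coordinate $L_{\max}x_{n+1}^2/2$ shared by all $f_i$. The oracle is $\nabla f_i(x)+\xi_i e_i$ with $\xi_i\sim\mathcal N(0,\sigma^2)$, so Assumption~\ref{ass:stochastic_variance_bounded} holds. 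The constraints $\varepsilon<0.01$ and $R>10$ are used to choose $\beta$ with $\beta^2 n\le R^2$ and $\beta^2\gtrsim \varepsilon$, so that $\|x^0-x^*\|\le R$ and the sign ambiguity matters at scale $\varepsilon$.

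The information argument runs as follows. Coordinate $i$ of any query response from worker $i$ equals $\mu n(x_i-\beta\varphi_i)+\xi_i$, while the other workers reveal nothing about $\varphi_i$. Estimating $x^*_i=\beta\varphi_i$ to accuracy $\varepsilon/n$ per coordinate is needed for total error $\varepsilon$ on a constant fraction of coordinates. Each sample from worker $i$ is a Gaussian observation of $\mu n\beta\varphi_i$ with variance $\sigma^2$, i.e.\ of $\beta\varphi_i$ with variance $\sigma^2/(\mu n)^2$. Hence $B_i$ samples give estimation error $\gtrsim \sigma^2/(\mu^2n^2B_i)$, and the query points cannot help because they only add known shifts. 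Place $\varphi_i$ uniformly in $\{\pm\theta\}$ with $\beta\theta$ chosen at the scale $\sqrt{\varepsilon/n}$, or use a Gaussian prior, and apply Le Cam or Bayesian MSE for Gaussian location. This gives
\[
\Exp{\|\bar x-x^*\|^2}\gtrsim \sum_i \min\Bigl\{\tfrac{\varepsilon}{n},\ \tfrac{\sigma^2}{\mu^2n^2\,\Exp{B_i}}\Bigr\}.
\]
Making this at most $\varepsilon$ forces $B_i\gtrsim \sigma^2/(n\mu^2\varepsilon)$ for at least $n/2$ workers. Within time $t$, worker $i$ produces at most $\lfloor t/\tau_i\rfloor$ samples. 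Requiring the $n/2$ slowest workers to satisfy this gives $t\ge \tau_{\lceil n/2\rceil}\,\sigma^2/(n\mu^2\varepsilon)\ge c_0\bigl(\frac{1}{n}\sum_i\tau_i\bigr)\sigma^2/(n\mu^2\varepsilon)$ up to constants, using $\frac1n\sum_i\tau_i\le \frac12\tau_{n/2}+\frac12\tau_n$. Here we must take the $n/2$ slowest workers, not an arbitrary half, so that the bound captures the large $\tau_i$.

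The main obstacle is this last step. The arithmetic mean is not bounded by the median $\tau_{n/2}$: a single straggler can dominate $\frac1n\sum_i\tau_i$. So the error weights must be rebalanced so that each coordinate, including the slowest worker's, contributes. We would try per-worker scales $\theta_i$ chosen so that coordinate $i$ carries error budget proportional to $\tau_i/\sum_j\tau_j$. The requirement then becomes $B_i\gtrsim (\tau_i/\sum_j\tau_j)^{-1}$-weighted, which after the time constraint $B_i\le t/\tau_i$ yields exactly $t\gtrsim(\sum_j\tau_j)\sigma^2/(n^2\mu^2\varepsilon)$. This rebalancing is the same device used in Theorem~\ref{thm:divergence_first}, and we would reuse that argument verbatim, changing only the construction above.
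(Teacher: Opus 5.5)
Your final argument is essentially the paper's proof. It has three parts. The first is the construction $f_i(x)=\frac{\mu n}{2}(\langle x,e_i\rangle-\beta\varphi_i)^2$, which satisfies weak but not strong interpolation. The second is the reduction of each of worker $i$'s responses to a Gaussian observation of $x^*_i$ with variance $\sigma^2/(n\mu)^2$. The third is per-coordinate boxes with $\theta_i^2\propto\tau_i/\sum_j\tau_j$, so that $\sum_i\min\{\theta_i^2,\sigma^2\tau_i/(n^2\mu^2t)\}$ factors as $\min\{\sum_i\theta_i^2,\sigma^2\sum_j\tau_j/(n^2\mu^2t)\}$; the paper does exactly this with $a_i^2=R^2\tau_i/\sum_j\tau_j$.

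You should drop the uniform-scale/median route entirely. Also, do not cite Theorem~\ref{thm:divergence_first} for the rebalancing: the paper's proof of that theorem uses a uniform box and relies on its extra smallness condition on $\varepsilon$. That condition is unavailable here, so the rebalanced box must be written out explicitly, with total mass either a sufficiently large constant times $\varepsilon$ or simply $R^2$.
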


\begin{restatable}[Lower Bound]{theorem}{PLNEEDED}
  \label{thm:divergence_3}
  Consider any randomized method under the fixed computation model and assume that $n \geq 2.$ Let us fix any $\varepsilon, L_{\max}, R, \mu, \sigma^2 > 0$ such that $\mu < L_{\max} / (2 n),$ $\varepsilon < 0.01,$ and $R > 10.$ For any time budget $\textstyle t \leq c_0 \left(\frac{1}{n}\sum\limits_{i=1}^n \tau_i\right) \frac{\sigma^2}{\varepsilon n \mu^2},$
  where $c_0$ is a universal constant, there exist functions $\{f_i\}$ and stochastic gradients $\{\nabla f_i(\cdot;\cdot)\}$ such that $\{f_i\}$ satisfy Assumptions~\ref{ass:lipschitz_constant}, \ref{ass:convex}, \ref{ass:inter}, and \ref{ass:inter_strong} \textbf{(Assumption~\ref{ass:pl_condition} is not imposed and may or may not hold with parameter $\mu$)}, $f$ satisfy Assumptions~\ref{ass:global_lipschitz_constant} and \ref{ass:pl_global} with $L = L_{\max}$, $\{\nabla f_i(\cdot;\cdot)\}$ satisfy Assumption~\ref{ass:stochastic_variance_bounded},
  such that the method cannot find $\varepsilon$--solution in terms of distances to the solution set after $t$ seconds, when the method starts at a point in a distance less or equal to $R$ to the closest solution.
\end{restatable}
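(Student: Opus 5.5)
The plan is to reduce Theorem~\ref{thm:divergence_3} to the mechanism behind Theorem~\ref{thm:divergence_first}. The hard instance has two features: information about coordinate $i$ of the solution is available essentially only from worker $i$, and it arrives through Gaussian-noised gradients. I would keep that structure and repair the defect of the construction in Theorem~\ref{thm:divergence_first}, namely that its local minimizers differ. The repair is a \emph{degenerate}, non-P\L{} convex term that pins every $f_i$ to the same unique minimizer while carrying almost no information. Concretely, I would take $x\in\R^n$ and a hidden vector $a\in\R^n$ with $a_i = \beta\varphi_i$, $\varphi_i\in\{-1,+1\}$, and set
\[
\textstyle f_i(x) = \frac{\mu n}{2}(x_i-a_i)^2 + \lambda \sum_{j\neq i} h(x_j-a_j),
\]
where $h$ is a convex, even, smoothed quartic with a unique minimizer at $0$. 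For example, $h(s)=s^4$ for $|s|\le 1$, continued quadratically (a $C^1$ convex extension) for $|s|>1$, so that $h''\le 12$. The stochastic gradients are $\nabla f_i(x;\xi_i)=\nabla f_i(x)+\xi_i$ with $\xi_i\sim\mathcal{N}(0,\sigma^2 \mI)$ (or noise on coordinate $i$ plus noise on the quartic part).

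The first step is to verify the assumptions. Each $f_i$ is a sum of convex terms that are each minimized only at $a$, so its argmin is exactly $\{a\}$. Hence Assumptions~\ref{ass:inter} and~\ref{ass:inter_strong} hold, and $f$ shares the unique minimizer $x^*=a$. Near $a$ we have $f_i(x)-f_i^*\sim\lambda|x_j-a_j|^4$ along a direction $j\neq i$, while $\|\nabla f_i\|^2\sim\lambda^2|x_j-a_j|^6$. Thus local P\L{} fails for $f_i$ with any parameter, which is allowed by the statement. The function $f=\frac{\mu}{2}\|x-a\|^2+\frac{\lambda(n-1)}{n}\sum_j h(x_j-a_j)$ is $\mu$-strongly convex, so Assumption~\ref{ass:pl_global} holds. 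Smoothness holds with $L_i\le \mu n+12\lambda$, and this is at most $L_{\max}$ by $\mu<L_{\max}/(2n)$ and small $\lambda$. Assumption~\ref{ass:stochastic_variance_bounded} holds because the noise is Gaussian, after rescaling $\sigma$ by a constant. Starting at $x^0=0$ gives $\|x^0-x^*\|^2=n\beta^2\le R^2$ once $\beta$ is chosen as below.

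The second step is the information argument. Fix the time budget $t$; worker $i$ produces at most $\lfloor t/\tau_i\rfloor$ gradients. A gradient from worker $j\neq i$ reveals $a_i$ only through $\lambda h'(x_i-a_i)$ observed in $\mathcal{N}(0,\sigma^2)$ noise. Its KL contribution between the hypotheses $\varphi_i=\pm1$ is at most $\mathcal{O}(\lambda^2 C_\beta/\sigma^2)$, where $C_\beta$ bounds $|h'|^2$ on the relevant range (bounded because $h'$ grows at most linearly). Since $t$ is fixed, taking $\lambda\to0$ makes the total cross-worker information below any threshold. The information from worker $i$'s own samples about $\varphi_i$ is at most $\frac{t}{\tau_i}\cdot\frac{(\mu n\beta)^2}{\sigma^2}$. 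Here the gradient difference is $2\mu n\beta$, and the algorithm's adaptive choice of query points does not matter because the mean shift is query-independent. By Le~Cam/Fano applied coordinatewise, as in Theorem~\ref{thm:divergence_first}, any estimator then has error $\Exp{(\bar{x}_i-a_i)^2}\ge c\beta^2$ on every coordinate with $t\,\mu^2n^2\beta^2/(\tau_i\sigma^2)\le c'$. I would choose $\beta^2\asymp \varepsilon/n$, feasible by $\varepsilon<0.01$ and $R>10$. Failure then requires that a constant fraction of the coordinates be uninformative, which happens whenever $\sum_i \tau_i^{-1}$ is controlled suitably. Rather than fix $\beta$ uniformly, I would pick per-coordinate scales $\beta_i$ so that $\sum_i\beta_i^2\gtrsim\varepsilon$ while each stays uninformative under budget $t$. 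With $\beta_i^2\propto \tau_i\sigma^2/(t\mu^2n^2)$, the sum is $\sum_i \tau_i\sigma^2/(t\mu^2 n^2)$, which exceeds $\varepsilon$ exactly when $t\lesssim(\frac1n\sum_i\tau_i)\frac{\sigma^2}{n\mu^2\varepsilon}$. This produces the arithmetic mean.

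I expect the main obstacle to be making the information leakage through the degenerate terms rigorous and uniform over adaptive randomized algorithms. Two issues arise: the quartic gradient signal depends on where the algorithm queries, and $h$ must stay globally smooth with $L\le L_{\max}$. I would handle this with the chain rule for KL over the adaptive transcript, bounding each step's divergence by $\sup_s|\lambda(h'(s-\beta_i)-h'(s+\beta_i))|^2/\sigma^2\le(4\lambda\cdot 12\beta_i)^2/\sigma^2$, using $|h''|\le 12$. I would then pick $\lambda$ small relative to $t$, the $\tau_i$, and $\sigma$. This choice is permitted, since the theorem only requires the existence of suitable functions for each fixed budget.
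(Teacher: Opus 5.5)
Your argument is correct in substance, but it follows a genuinely different route from the paper. The paper works in one dimension with $f_i(x)=\frac{a_i}{2}(x-\varphi)^2$, $a_i=n\mu\tau_i/S_\tau$, $S_\tau=\sum_i\tau_i$, so that $\frac1n\sum_i a_i=\mu$. Strong interpolation is then automatic, because every $f_i$ has the unique minimizer $\varphi$. Local P\L{} fails only quantitatively: fast workers have curvature $a_i<\mu$. Worker $i$'s information rate $a_i^2/(\sigma^2\tau_i)$ is proportional to $\tau_i$, so the total KL by time $t$ is $2t\varphi^2n^2\mu^2/(\sigma^2S_\tau)$, and a single two-point Le Cam bound with $\varphi=\pm4\sqrt{\varepsilon}$ finishes the proof. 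There is no coupling parameter, no leakage to control, and no hypercube.

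You instead reuse the construction of Theorem~\ref{thm:divergence_2}: each worker is informative only about its own coordinate, with scales $\beta_i^2\propto\tau_i$. You then restore a unique common minimizer with an arbitrarily small non-P\L{} quartic penalty. This forces an Assouad-type argument over $\{\pm1\}^n$, using the KL chain rule along the adaptive transcript and a limit $\lambda\to0$ at fixed $t$. Your uniform bound $\lambda\abs{h'(s-\beta_i)-h'(s+\beta_i)}\le 24\lambda\beta_i$, which follows from $h''\le 12$, is exactly what makes the leakage query-independent.

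What your route buys is the observation that strong interpolation has no quantitative content by itself: any weak-interpolation hard instance becomes strongly interpolating after an infinitesimal perturbation. The paper's route is shorter and shows something complementary. Even quadratics with a common minimizer that \emph{do} satisfy local P\L{} (with constant $\min_i a_i$, just smaller than $\mu$) retain the arithmetic-mean barrier, so what matters is the size of the local P\L{} constant.

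Two details need correcting.

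\textbf{Noise scaling.} Isotropic noise $\mathcal{N}(0,\sigma^2\mI)$ has variance $n\sigma^2$, and rescaling it to meet Assumption~\ref{ass:stochastic_variance_bounded} costs a factor $n$, not a constant. The variance on the informative coordinate drops to $\sigma^2/n$, the per-sample KL grows by a factor $n$, and you only obtain $t\lesssim(\frac1n\sum_i\tau_i)\frac{\sigma^2}{n^2\mu^2\varepsilon}$. Use your parenthetical option instead: put variance $\sigma^2/2$ on coordinate $i$ and spread the remaining $\sigma^2/2$ over the other coordinates. Some noise there is necessary, since $h'$ is strictly increasing and a noiseless $\lambda h'(x_j-a_j)$ would reveal $a_j$ exactly. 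The smaller cross-noise is absorbed by choosing a smaller $\lambda$.

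\textbf{Choice of scales.} The $t$-dependent scales $\beta_i^2\propto\tau_i\sigma^2/(t\mu^2n^2)$ make $\sum_i\beta_i^2$ blow up as $t\to0$, which violates $\norm{x^0-x^*}\le R$. Take $\beta_i^2=C\varepsilon\tau_i/S_\tau$ instead. Then $\sum_i\beta_i^2=C\varepsilon<R^2$, and the own-coordinate KL of worker $i$ is at most $\frac{t}{\tau_i}\cdot\frac{4\mu^2n^2\beta_i^2}{\sigma^2}\le 4Cc_0$ for every admissible $t$. With this choice, your uniform-$\beta$ paragraph can be dropped.
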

\begin{takeawaybox}
   \label{ta:interp_new}
   Even when the weak interpolation assumption is combined with only one of Assumptions~\ref{ass:inter_strong} and \ref{ass:pl_condition}, we still obtain only the arithmetic mean dependence on $\{\tau_i\}.$
\end{takeawaybox}
Once we drop either Assumption~\ref{ass:inter_strong} or Assumption~\ref{ass:pl_condition}, it becomes possible to construct a ``bad'' function (see the proof of theorems) that provides no room for any randomized method to improve. 
\subsection{Finally bridging the gap}
\label{sec:main}
However, if assume that both Assumption~\ref{ass:inter_strong} and Assumption~\ref{ass:pl_condition} hold, then, finally, we can proof the convergence with harmonic-like dependence on $\{\tau_i\}:$ 
\begin{restatable}[Upper Bound]{theorem}{MAINTHEOREM}
  \label{thm:main_theorem}
  Let Assumptions~\ref{ass:lipschitz_constant}, \ref{ass:convex}, \ref{ass:stochastic_variance_bounded}, \ref{ass:inter_strong}, \ref{ass:pl_condition} hold\footnote{It is well-know that Assumption~\ref{ass:lipschitz_constant} implies Assumption~\ref{ass:global_lipschitz_constant}. In Section~\ref{sec:aux_imply}, we prove that Assumptions~\ref{ass:convex}, \ref{ass:inter_strong} and Assumption~\ref{ass:pl_condition} with constant $\mu$ imply Assumption~\ref{ass:pl_global} with constant $\mu / 4$.}. 
  We choose $w_i^k = \nicefrac{n}{\sum_{i=1}^n B_i^k}$ for all $k \geq 0, i \in [n]$ in Algorithm~\ref{alg:alg_server_heterog} (reduces to \mbox{\algname{Rennala~SGD}}). 
  We take $\gamma = \nicefrac{1}{L_{\max}},$ $S = \nicefrac{4 \sigma^2}{\mu L_{\max} \varepsilon},$ and run \mbox{\algname{Rennala SGD}} for $k \geq \Omega\left(\frac{L_{\max}}{\mu} \log \frac{R^2}{\varepsilon}\right)$ iterations, then $\Exp{\norm{x^{k+1} - x_*^{k+1}}^2} \leq \varepsilon,$ where $x_*^{k+1}$ is the closest solution to $x^{k+1}.$ Moreover, under the fixed computation model, the method requires 
    $\squeeze \cO\left(\min\limits_{m \in [n]} \left[\left(\frac{1}{m} \sum\limits_{i=1}^{m} \frac{1}{\tau_{i}}\right)^{-1} \left(\frac{L_{\max}}{\mu} + \frac{\sigma^2}{m \varepsilon \mu^2}\right)\right]\log \frac{R^2}{\varepsilon}\right)$
  seconds.
\end{restatable}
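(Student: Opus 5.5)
The plan is a one-step contraction for $\norm{x^k - \Pi(x^k)}^2$, where $\Pi$ is the projection onto the common solution set $\mathcal{X}_*$, followed by unrolling and a time-per-iteration count borrowed from the \algname{Rennala SGD} analysis.

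\textbf{Structure of the solution set.} By Assumption~\ref{ass:inter_strong}, $\mathcal{X}_* = \arg\min f = \arg\min f_i$ for every $i$. By Assumption~\ref{ass:convex}, this set is closed and convex, so $\Pi$ is well defined. The key consequence is that $x^k_* \eqdef \Pi(x^k)$ minimizes \emph{every} $f_i$, so $\nabla f_i(x^k_*) = 0$ and $f_i(x^k_*) = f_i^*$ for all $i$.

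\textbf{Deterministic part of the step.} Condition on $x^k$ and on the counts $\{B_i^k\}$. These counts are determined by the computation times, not by the noise. Write $p_i \eqdef B_i^k/\sum_j B_j^k$, so that $\Exp{g^k_{\textnormal{\algname{R}}}} = \sum_i p_i \nabla f_i(x^k) \eqdef h^k$, which is a biased estimate of $\nabla f$. Then
\begin{align*}
\Exp{\norm{x^{k+1}-x^{k+1}_*}^2} \leq \Exp{\norm{x^{k+1}-x^k_*}^2} = \norm{x^k-x^k_*}^2 - 2\gamma\inp{h^k}{x^k-x^k_*} + \gamma^2\norm{h^k}^2 + \gamma^2 \mathrm{Var},
\end{align*}
where $\mathrm{Var} \leq \sigma^2/\sum_i B_i^k \leq \sigma^2/S$ by the stopping rule.

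The bias is handled function by function. For each $i$, convexity and $L_i$-smoothness, with $\nabla f_i(x^k_*)=0$, give
\begin{align*}
\inp{\nabla f_i(x^k)}{x^k-x^k_*} \geq f_i(x^k)-f_i^* \quad\text{and}\quad \norm{\nabla f_i(x^k)}^2 \leq 2L_{\max}(f_i(x^k)-f_i^*).
\end{align*}
By Jensen, $\norm{h^k}^2 \leq \sum_i p_i\norm{\nabla f_i(x^k)}^2$. With $\gamma = 1/L_{\max}$, the cross and quadratic terms together contribute at most $-\gamma\sum_i p_i (f_i(x^k)-f_i^*)$.

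Next I need a lower bound on $f_i(x^k)-f_i^*$ in terms of the distance. Assumption~\ref{ass:pl_condition} implies the quadratic growth $f_i(x)-f_i^* \geq \frac{\mu}{2}\,\mathrm{dist}(x,\arg\min f_i)^2$ (Karimi et al.). Since $\arg\min f_i = \mathcal{X}_*$, this distance equals $\norm{x^k-x^k_*}$ for every $i$. This is exactly where both assumptions are needed: the PL constant and the common minimizer set hold for each $f_i$ individually, so an arbitrary convex combination with weights $p_i$ still contracts. Hence
\begin{align*}
\Exp{\norm{x^{k+1}-x^{k+1}_*}^2} \leq \left(1-\frac{\mu}{2L_{\max}}\right)\norm{x^k-x^k_*}^2 + \frac{\sigma^2}{L_{\max}^2 S}.
\end{align*}

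\textbf{Unrolling.} Unrolling gives $(1-\mu/(2L_{\max}))^k R^2 + 2\sigma^2/(\mu L_{\max} S)$. With $S = 4\sigma^2/(\mu L_{\max}\varepsilon)$, the second term is at most $\varepsilon/2$. Taking $k = \Omega(\frac{L_{\max}}{\mu}\log\frac{R^2}{\varepsilon})$ makes the first term at most $\varepsilon/2$.

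\textbf{Time per iteration.} Under the fixed computation model, the time to collect $S$ gradients in total is $\cO(\min_{m}[(\frac1m\sum_{i\le m}\frac1{\tau_i})^{-1}(1+S/m)])$. This is the standard \algname{Rennala SGD} bound (Theorem~\ref{thm:rennala} / \citet{tyurin2023optimal}), obtained by letting only the $m$ fastest workers count. Multiplying by the number of iterations, the terms combine as
\begin{align*}
\frac{L_{\max}}{\mu}\cdot\frac{S}{m} = \frac{4\sigma^2}{m\varepsilon\mu^2},
\end{align*}
which yields the stated total time.

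\textbf{Main obstacle.} The delicate step is the bias: $h^k \neq \nabla f(x^k)$, and the weights $p_i$ are arbitrary. The fix is to avoid comparing to $\nabla f$ at all and to run a per-$i$ analysis around the common projection point, using quadratic growth for each $f_i$. I also need to verify that the PL-to-quadratic-growth implication holds without strong convexity. It does, for $L$-smooth functions with nonempty minimizer set.
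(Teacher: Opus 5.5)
Your overall structure matches the paper's proof: projection onto the common solution set, Jensen over the weights $p_i$, a per-$f_i$ argument at the common point $x^k_*$, unrolling, and the \algname{Rennala SGD} per-iteration time. However, the pivotal step does not follow from what you wrote. Take the two inequalities you cite, $\inp{\nabla f_i(x^k)}{x^k-x^k_*} \geq f_i(x^k)-f_i^*$ and $\norm{\nabla f_i(x^k)}^2 \leq 2L_{\max}(f_i(x^k)-f_i^*)$. With $\gamma = 1/L_{\max}$ they only give
\begin{align*}
-2\gamma\inp{h^k}{x^k-x^k_*} + \gamma^2\norm{h^k}^2 \leq \left(2L_{\max}\gamma^2 - 2\gamma\right)\sum_i p_i\left(f_i(x^k)-f_i^*\right) = 0,
\end{align*}
because the two bounds cancel exactly. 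Your claimed $-\gamma\sum_i p_i(f_i(x^k)-f_i^*)$ would need either $\gamma \leq 1/(2L_{\max})$ or the false bound $\norm{\nabla f_i}^2 \leq L_{\max}(f_i - f_i^*)$. As written, you therefore get only $\Exp{\norm{x^{k+1}-x^{k+1}_*}^2} \leq \norm{x^k-x^k_*}^2 + \gamma^2\sigma^2/S$. That is no contraction at the stepsize the theorem prescribes, so the linear rate and the iteration count do not follow.

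The missing ingredient is an inequality that couples smoothness and convexity rather than using them separately. The paper uses co-coercivity (Lemma~\ref{lemma:lipt_func}): since $\nabla f_i(x^k_*) = 0$, we have $\norm{\nabla f_i(x^k)}^2 \leq L_i \inp{\nabla f_i(x^k)}{x^k-x^k_*}$. This controls the quadratic term by the same inner product as the cross term, so their sum is at most $-(2\gamma - L_{\max}\gamma^2)\sum_i p_i\inp{\nabla f_i(x^k)}{x^k-x^k_*} = -\gamma\sum_i p_i\inp{\nabla f_i(x^k)}{x^k-x^k_*}$. Next apply convexity plus quadratic growth of each $f_i$ around $\operatorname*{arg\,min} f_i = \mathcal{X}_*$. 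This is your correct observation about where strong interpolation enters, and it is exactly Lemma~\ref{lemma:star_strongly_convex} applied to each $f_i$. It gives the lower bound $\frac{\mu}{2}\norm{x^k-x^k_*}^2$, after which your recursion, unrolling, and time count go through as in the paper. Alternatively, the inequality $\inp{\nabla f_i(x)}{x-x_*} \geq f_i(x)-f_i^* + \frac{1}{2L_i}\norm{\nabla f_i(x)}^2$ repairs your function-value route directly, and even yields the contraction factor $1-\gamma\mu$.
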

Under weaker assumptions, without requiring the equality of the functions $\{f_i\},$ this theorem yields time complexity guarantees with a ``harmonic''-like dependence on the times $\{\tau_i\}$ for the \algname{Rennala SGD} method, improving upon the previous theoretical results in Theorem~\ref{thm:rennala} and \citep{tyurin2023optimal}.
Notice that the method in Theorem~\ref{thm:main_theorem} is still biased because 
 $\Exp{\sum_{i=1}^{n} \sum_{j=1}^{B_i^k} \nabla f_i(x;\xi^k_{ij}) / \sum_{i=1}^n B_i^k} \neq \nabla f(x)$
in general. That said, we can successfully prove the theorem under this constraint. One of the primary reasons for this is the right choice of \emph{convergence metric}. Initially, we aimed to analyze the biased gradient estimator in terms of function values and gradient norms, trying to prove that the method returns a point $\bar{x}$ such that $\mathbb{E}[f(\bar{x})] - f^* \leq \varepsilon$ or $\mathbb{E}[\norm{\nabla f(\bar{x})}^2] \leq \varepsilon$. However, the more appropriate approach is to show $\mathbb{E}[\norm{\bar{x} - x_*}^2] \leq \varepsilon$. Using this convergence metric allows us to analyze the biased gradient estimator. This observation can be important on its own. Note that we can get convergence in terms of $\mathbb{E}[f(\bar{x})] - f^* \leq \varepsilon$ using $L$--smoothness, but the result would be loose. One interesting observation is that we do not observe a regime where any other method or strategy improves upon both \algname{Malenia SGD} and \algname{Rennala SGD}.
\begin{takeawaybox}
   \label{ta:final}
   Improving the pessimistic dependence in \algname{Malenia SGD} is possible with \algname{Rennala SGD} and the additional assumptions, Assumption~\ref{ass:inter_strong} and Assumption~\ref{ass:pl_condition}, in convex optimization.
\end{takeawaybox}
\begin{remark}
\label{rmk:new2}
Theorem~\ref{thm:main_theorem} is proved under the strong interpolation assumption. This assumption is essential for our result: even relaxing strong interpolation to weak interpolation is insufficient to improve the pessimistic time complexity achieved by \algname{Malenia SGD}. At the same time, strong interpolation does not require the local functions ${f_i}$ to be identical. For example, consider the least-squares problems $f_i(x)=\frac{1}{2}\|\mA_i(x-x^\star)\|^2,$ where the matrices $\mA_i$ can be different but have a common kernel $N$. Then $\operatorname{argmin} f_i=x^\star+N$
for every worker $i$, so the strong interpolation assumption holds. Thus, this setting provides a concrete machine-learning example with heterogeneous, non-identical local functions 
covered by Theorem~\ref{thm:main_theorem}.
\end{remark}
\subsection{Extension to nonconvex and general convex optimization}
Although in this paper we focus on the convergence metric $\mathbb{E}[\|x^k - x_*\|^2]$ under Assumptions~\ref{ass:global_lipschitz_constant}, \ref{ass:lipschitz_constant}, \ref{ass:convex}, \ref{ass:stochastic_variance_bounded}, and \ref{ass:pl_global}, we briefly sketch how the constructions behind Theorems~\ref{thm:divergence_first}, \ref{thm:allneeded}, \ref{thm:divergence_2}, and \ref{thm:divergence_3} may extend to general convex optimization in terms of function values and to nonconvex optimization in terms of gradient norms. Indeed, in the general convex optimization setting, we typically consider Assumptions~\ref{ass:global_lipschitz_constant}, \ref{ass:convex}, and \ref{ass:stochastic_variance_bounded}. The same constructions can be considered with $f(x)-f^*=\frac{\mu}{2}\norm{x-x^*}^2,$ where $\mu = L.$ For sufficiently small $\varepsilon,$ applying the construction in the proof of Theorem~\ref{thm:divergence_3} with squared-distance accuracy $2\varepsilon/\mu$ suggests a lower bound of order $\Omega\big(\left(\frac{1}{n}\sum_{i=1}^n\tau_i\right)\frac{\sigma^2}{\varepsilon n L}\big)$ seconds for finding $\bar{x}$ such that $\Exp{f(\bar{x})}-f^*\leq\varepsilon.$ Similarly, applying the construction with squared-distance accuracy $\varepsilon/L^2$ suggests a lower bound of order $\Omega\big(\left(\frac{1}{n}\sum_{i=1}^n\tau_i\right)\frac{\sigma^2}{\varepsilon n}\big)$ seconds for finding $\bar{x}$ such that $\mathbb E [\norm{\nabla f(\bar{x})}^2]\leq\varepsilon.$ In nonconvex optimization, we consider Assumptions~\ref{ass:global_lipschitz_constant} and \ref{ass:stochastic_variance_bounded}, and may use the same (convex) construction to prove the lower bounds. These observations suggest that the importance of Assumptions~\ref{ass:inter_strong} and~\ref{ass:pl_condition} for improving \emph{the arithmetic-mean dependence} may extend to other optimization settings. However, obtaining tight dependence on other parameters such as $L$ and $\varepsilon$ would require a different construction and is left for future work.
\section{Conclusions}
\label{sec:conclusions}
In this work, we investigated various assumptions and setups in an effort to break the pessimistic dependence on $\{\tau_i\}$ achieved by \algname{Malenia SGD}. We considered the first- and second-order similarity, strong growth, and interpolation assumptions. We proved that under the first- and second-order similarity assumptions, it is infeasible to improve the dependence on the arithmetic mean of $\{\tau_i\}$ with any randomized algorithm. We also showed that under weak interpolation (Assumption~\ref{ass:inter}), it is likewise not possible for any randomized algorithm to improve upon the result of \algname{Malenia SGD}. Subsequently, we presented new theoretical results that provide improved time complexity guarantees in the heterogeneous setting, without assuming that the functions ${f_i}$ are identical (Theorem~\ref{thm:main_theorem}). These results are obtained under the standard assumptions of convex optimization, together with Assumptions~\ref{ass:inter_strong} and~\ref{ass:pl_condition}. 
Importantly, we have not merely introduced these assumptions to close the gap, but have shown that neither Assumption~\ref{ass:inter_strong} nor Assumption~\ref{ass:pl_condition} can be dropped in general within our setting, highlighting the fundamental limits of heterogeneous stochastic optimization. At the same time, we acknowledge that strong interpolation is a restrictive assumption in heterogeneous settings, as it requires the local functions to share the same set of minimizers.
Identifying this fundamental difficulty is one of the main goals of our work.

There are many unexplored directions that can build on our initial results and observations. While we focused on the most common assumptions in federated and distributed learning, our findings may inspire the development of new assumptions and settings where it is possible to improve upon \algname{Malenia SGD}. 
Moreover, our upper bounds and lower bounds were investigated in terms of $\mathbb{E}[\|x^k - x_*\|^2]$ convergence, and the lower bounds are only tight up to logarithmic factors and in small-$\varepsilon$ regimes. 
It would be interesting to see whether tight lower bounds can be obtained in terms of $\mathbb{E}[\|\nabla f(x^k)\|^2]$ in the non-convex setting, and in terms of $\mathbb{E}[f(x^k)] - f^*$ for convex functions.

\ifarxiv
\else
\subsection*{AI use statement}
In this work, we used generative AI tools to check for and correct grammatical mistakes. We have reviewed all AI-assisted work. We take responsibility for the final content of this work, including text, claims, or artifacts produced with the aid of generative AI.
\fi

\bibliography{iclr2027_conference}

@book{goodfellow2016deep,
title={Deep learning},
author={Goodfellow, Ian and Bengio, Yoshua and Courville, Aaron and Bengio, Yoshua},
volume={1},
year={2016},
publisher={MIT Press}
}

@inproceedings{mcmahan2017communication,
  title={Communication-efficient learning of deep networks from decentralized data},
  author={McMahan, Brendan and Moore, Eider and Ramage, Daniel and Hampson, Seth and y Arcas, Blaise Aguera},
  booktitle={Artificial Intelligence and Statistics},
  pages={1273--1282},
  year={2017},
  organization={PMLR}
}

@book{nesterov2018lectures,
	title={Lectures on convex optimization},
	author={Nesterov, Yurii},
	volume={137},
	year={2018},
	publisher={Springer}
}

@inproceedings{he2016deep,
	title={Deep residual learning for image recognition},
	author={He, Kaiming and Zhang, Xiangyu and Ren, Shaoqing and Sun, Jian},
	booktitle={Proceedings of the IEEE Conference on Computer Vision and Pattern Recognition (CVPR)},
	pages={770--778},
	year={2016}
}

@techreport{krizhevsky2009learning,
	title={Learning multiple layers of features from tiny images},
	author={Krizhevsky, Alex and Hinton, Geoffrey and others},
	year={2009},
	jnumber = {Technical Report TR-2009},
	institution = {University of Toronto,  Toronto}
}

@article{konevcny2016federated,
  title={Federated learning: Strategies for improving communication efficiency},
  author={Kone{\v{c}}n{\'y}, Jakub and McMahan, H Brendan and Yu, Felix X and Richt{\'a}rik, Peter and Suresh, Ananda Theertha and Bacon, Dave},
  journal={arXiv preprint arXiv:1610.05492},
  year={2016}
}

@article{carmon2020lower,
  title={Lower bounds for finding stationary points I},
  author={Carmon, Yair and Duchi, John C and Hinder, Oliver and Sidford, Aaron},
  journal={Mathematical Programming},
  volume={184},
  number={1},
  pages={71--120},
  year={2020},
  publisher={Springer}
}

@article{huang2022lower,
  title={Lower Bounds and Nearly Optimal Algorithms in Distributed Learning with Communication Compression},
  author={Huang, Xinmeng and Chen, Yiming and Yin, Wotao and Yuan, Kun},
  journal={Advances in Neural Information Processing Systems (NeurIPS)},
  year={2022}
}

@article{schmidt2013fast,
  title={Fast convergence of stochastic gradient descent under a strong growth condition},
  author={Schmidt, Mark and Roux, Nicolas Le},
  journal={arXiv preprint arXiv:1308.6370},
  year={2013}
}

@article{goyal2017accurate,
  title={Accurate, large minibatch {SGD}: Training imagenet in 1 hour},
  author={Goyal, Priya and Doll{\'a}r, Piotr and Girshick, Ross and Noordhuis, Pieter and Wesolowski, Lukasz and Kyrola, Aapo and Tulloch, Andrew and Jia, Yangqing and He, Kaiming},
  journal={arXiv preprint arXiv:1706.02677},
  year={2017}
}

@article{koloskova2022sharper,
  title={Sharper Convergence Guarantees for Asynchronous {SGD} for Distributed and Federated Learning},
  author={Koloskova, Anastasia and Stich, Sebastian U and Jaggi, Martin},
  journal={Advances in Neural Information Processing Systems (NeurIPS)},
  year={2022}
}

@inproceedings{lu2021optimal,
  title={Optimal complexity in decentralized training},
  author={Lu, Yucheng and De Sa, Christopher},
  booktitle={International Conference on Machine Learning},
  pages={7111--7123},
  year={2021},
  organization={PMLR}
}

@book{lan2020first,
  title={First-order and stochastic optimization methods for machine learning},
  author={Lan, Guanghui},
  year={2020},
  publisher={Springer}
}

@article{arjevani2022lower,
  title={Lower bounds for non-convex stochastic optimization},
  author={Arjevani, Yossi and Carmon, Yair and Duchi, John C and Foster, Dylan J and Srebro, Nathan and Woodworth, Blake},
  journal={Mathematical Programming},
  pages={1--50},
  year={2022},
  publisher={Springer}
}

@article{nemirovskij1983problem,
  title={Problem complexity and method efficiency in optimization},
  author={Nemirovskij, Arkadij Semenovi{\v{c}} and Yudin, David Borisovich},
  year={1983},
  publisher={Wiley-Interscience}
}

@article{cotter2011better,
  title={Better mini-batch algorithms via accelerated gradient methods},
  author={Cotter, Andrew and Shamir, Ohad and Srebro, Nati and Sridharan, Karthik},
  journal={Advances in Neural Information Processing Systems},
  volume={24},
  year={2011}
}

@inproceedings{gower2019sgd,
  title={{SGD}: General analysis and improved rates},
  author={Gower, Robert Mansel and Loizou, Nicolas and Qian, Xun and Sailanbayev, Alibek and Shulgin, Egor and Richt{\'a}rik, Peter},
  booktitle={International Conference on Machine Learning},
  pages={5200--5209},
  year={2019},
  organization={PMLR}
}

@inproceedings{dutta2018slow,
  title={Slow and stale gradients can win the race: Error-runtime trade-offs in distributed {SGD}},
  author={Dutta, Sanghamitra and Joshi, Gauri and Ghosh, Soumyadip and Dube, Parijat and Nagpurkar, Priya},
  booktitle={International Conference on Artificial Intelligence and Statistics},
  pages={803--812},
  year={2018},
  organization={PMLR}
}

@inproceedings{arjevani2020tight,
  title={A tight convergence analysis for stochastic gradient descent with delayed updates},
  author={Arjevani, Yossi and Shamir, Ohad and Srebro, Nathan},
  booktitle={Algorithmic Learning Theory},
  pages={111--132},
  year={2020},
  organization={PMLR}
}

@article{feyzmahdavian2016asynchronous,
  title={An asynchronous mini-batch algorithm for regularized stochastic optimization},
  author={Feyzmahdavian, Hamid Reza and Aytekin, Arda and Johansson, Mikael},
  journal={IEEE Transactions on Automatic Control},
  volume={61},
  number={12},
  pages={3740--3754},
  year={2016},
  publisher={IEEE}
}

@article{mishchenko2022asynchronous,
  title={Asynchronous {SGD} beats minibatch {SGD} under arbitrary delays},
  author={Mishchenko, Konstantin and Bach, Francis and Even, Mathieu and Woodworth, Blake},
  journal={Advances in Neural Information Processing Systems (NeurIPS)},
  year={2022}
}

@article{wu2022delay,
  title={Delay-adaptive step-sizes for asynchronous learning},
  author={Wu, Xuyang and Magnusson, Sindri and Feyzmahdavian, Hamid Reza and Johansson, Mikael},
  journal={arXiv preprint arXiv:2202.08550},
  year={2022}
}

@article{woodworth2018graph,
  title={Graph oracle models, lower bounds, and gaps for parallel stochastic optimization},
  author={Woodworth, Blake E and Wang, Jialei and Smith, Adam and McMahan, Brendan and Srebro, Nati},
  journal={Advances in Neural Information Processing Systems},
  volume={31},
  year={2018}
}

@article{cohen2021asynchronous,
  title={Asynchronous Stochastic Optimization Robust to Arbitrary Delays},
  author={Cohen, Alon and Daniely, Amit and Drori, Yoel and Koren, Tomer and Schain, Mariano},
  journal={Advances in Neural Information Processing Systems},
  volume={34},
  pages={9024--9035},
  year={2021}
}

@article{tyurin2023optimal,
  title={Optimal Time Complexities of Parallel Stochastic Optimization Methods Under a Fixed Computation Model},
  author={Tyurin, Alexander and Richt{\'a}rik, Peter},
  journal = {Advances in Neural Information Processing Systems (NeurIPS)},
  year = {2023},
}

@inproceedings{szlendak2021permutation,
  title={Permutation Compressors for Provably Faster Distributed Nonconvex Optimization},
  author={Szlendak, Rafa{\l} and Tyurin, Alexander and Richt{\'a}rik, Peter},
  booktitle={International Conference on Learning Representations},
  year={2021}
}

@inproceedings{scaman2017optimal,
  title={Optimal algorithms for smooth and strongly convex distributed optimization in networks},
  author={Scaman, Kevin and Bach, Francis and Bubeck, S{\'e}bastien and Lee, Yin Tat and Massouli{\'e}, Laurent},
  booktitle={International Conference on Machine Learning},
  pages={3027--3036},
  year={2017},
  organization={PMLR}
}

@article{woodworth2016tight,
  title={Tight complexity bounds for optimizing composite objectives},
  author={Woodworth, Blake E and Srebro, Nati},
  journal={Advances in Neural Information Processing Systems},
  volume={29},
  year={2016}
}

@article{arjevani2015communication,
  title={Communication complexity of distributed convex learning and optimization},
  author={Arjevani, Yossi and Shamir, Ohad},
  journal={Advances in Neural Information Processing Systems},
  volume={28},
  year={2015}
}

@inproceedings{karimi2016linear,
  title={Linear convergence of gradient and proximal-gradient methods under the polyak-{\l}ojasiewicz condition},
  author={Karimi, Hamed and Nutini, Julie and Schmidt, Mark},
  booktitle={Machine Learning and Knowledge Discovery in Databases: European Conference, ECML PKDD 2016, Riva del Garda, Italy, September 19-23, 2016, Proceedings, Part I 16},
  pages={795--811},
  year={2016},
  organization={Springer}
}

@article{feyzmahdavian2023asynchronous,
  title={Asynchronous iterations in optimization: New sequence results and sharper algorithmic guarantees},
  author={Feyzmahdavian, Hamid Reza and Johansson, Mikael},
  journal={Journal of Machine Learning Research},
  volume={24},
  number={158},
  pages={1--75},
  year={2023}
}

@inproceedings{tyurin2024tight,
  title={Tight Time Complexities in Parallel Stochastic Optimization with Arbitrary Computation Dynamics}, 
  author={Tyurin, Alexander},
  booktitle={International Conference on Learning Representations (ICLR)},
  year={2025},
}

@article{zou2019improved,
  title={An improved analysis of training over-parameterized deep neural networks},
  author={Zou, Difan and Gu, Quanquan},
  journal={Advances in Neural Information Processing Systems},
  volume={32},
  year={2019}
}

@article{zhang2021understanding,
  title={Understanding deep learning (still) requires rethinking generalization},
  author={Zhang, Chiyuan and Bengio, Samy and Hardt, Moritz and Recht, Benjamin and Vinyals, Oriol},
  journal={Communications of the ACM},
  volume={64},
  number={3},
  pages={107--115},
  year={2021},
  publisher={ACM New York, NY, USA}
}

@article{tyurin2024freya,
  title={Freya {PAGE}: First Optimal Time Complexity for Large-Scale Nonconvex Finite-Sum Optimization with Heterogeneous Asynchronous Computations},
  author={Tyurin, Alexander and Gruntkowska, Kaja and Richt{\'a}rik, Peter},
  journal = {Advances in Neural Information Processing Systems (NeurIPS)},
  year = {2024},
}

@article{vaswani2019painless,
  title={Painless stochastic gradient: Interpolation, line-search, and convergence rates},
  author={Vaswani, Sharan and Mishkin, Aaron and Laradji, Issam and Schmidt, Mark and Gidel, Gauthier and Lacoste-Julien, Simon},
  journal={Advances in neural information processing systems},
  volume={32},
  year={2019}
}

@inproceedings{maranjyan2025ringmaster,
  title={Ringmaster {ASGD}: The First Asynchronous {SGD} with Optimal Time Complexity},
  author={Maranjyan, Artavazd and Tyurin, Alexander and Richt{\'a}rik, Peter},
  booktitle={International Conference on Machine Learning},
  year={2025}
}

@article{lian2015asynchronous,
  title={Asynchronous parallel stochastic gradient for nonconvex optimization},
  author={Lian, Xiangru and Huang, Yijun and Li, Yuncheng and Liu, Ji},
  journal={Advances in Neural Information Processing Systems},
  volume={28},
  year={2015}
}

@article{stich2020error,
  title={The error-feedback framework: {SGD} with delayed gradients},
  author={Stich, Sebastian U and Karimireddy, Sai Praneeth},
  journal={Journal of Machine Learning Research},
  volume={21},
  number={237},
  pages={1--36},
  year={2020}
}

@inproceedings{sra2016adadelay,
  title={Adadelay: Delay adaptive distributed stochastic optimization},
  author={Sra, Suvrit and Yu, Adams Wei and Li, Mu and Smola, Alex},
  booktitle={Artificial Intelligence and Statistics},
  pages={957--965},
  year={2016},
  organization={PMLR}
}

@inproceedings{islamov2024asgrad,
  title={{AsGrad}: A sharp unified analysis of asynchronous-{SGD} algorithms},
  author={Islamov, Rustem and Safaryan, Mher and Alistarh, Dan},
  booktitle={International Conference on Artificial Intelligence and Statistics},
  pages={649--657},
  year={2024},
  organization={PMLR}
}

@article{zhang2015staleness,
  title={Staleness-aware async-sgd for distributed deep learning},
  author={Zhang, Wei and Gupta, Suyog and Lian, Xiangru and Liu, Ji},
  journal={arXiv preprint arXiv:1511.05950},
  year={2015}
}

@book{wainwright2019high,
  title={High-dimensional statistics: A non-asymptotic viewpoint},
  author={Wainwright, Martin J},
  volume={48},
  year={2019},
  publisher={Cambridge university press}
}

@inproceedings{maranjyan2026ringleader,
  title={Ringleader ASGD: The first asynchronous SGD with optimal time complexity under data heterogeneity},
  author={Maranjyan, Artavazd and Richt{\'a}rik, Peter},
  booktitle={International Conference on Learning Representations},
  volume={2026},
  pages={4738--4768},
  year={2026}
}
\bibliographystyle{iclr2027_conference}

\appendix

\newpage

\tableofcontents

\newpage

\newpage

\section{Arbitrarily computation dynamics}
\label{sec:arbitrarily}
Our new result can be readily extended to \emph{the universal computation model}. To encompass virtually all computation scenarios, assume that each worker $i$ performs computations based on a \emph{computation power} function $v_i \,:\, \R_{+} \rightarrow \R_{+}$. Then the number of stochastic gradients that worker $i$ can calculate from a time $t_0$ to a time $t_1$ is an integral of the computation power $v_i$ followed by the floor operation:
\begin{align}
  \label{eq:required_time}
  \textnormal{``\# of stoch. grad. in $[t_0, t_1]$''} = \flr{\int_{t_0}^{t_1} v_i(\tau) d \tau}.
\end{align}
For instance, if worker $i$ is inactive for the first $t$ seconds and then active again, it would mean $v_i(\tau) = 0$ for all $\tau \leq t$ and $v_i(\tau) > 0$ for all $\tau > t.$ Using the universal computation model, we can prove the theorem:
\begin{restatable}{theorem}{MAINTHEOREMARBITRARILY}
  Consider the assumptions, algorithm, and parameters from Theorem~\ref{thm:main_theorem}. Then, \algname{Rennala SGD} converges after at most $\bar{t}_{\ceil{c \times \frac{L_{\max}}{\mu} \log \frac{R^2}{\varepsilon}}}$ seconds, where the sequence $\{\bar{t}_k\}$ is defined recursively as $\bar{t}_k \eqdef $
  \begin{align}
    \label{eq:homog_compl}
    \squeeze \min\left\{t \geq 0 : \sum\limits_{i=1}^n \flr{{\displaystyle \int_{\bar{t}_{k-1}}^{t}} v_i(\tau) d \tau} \geq \max\left\{\ceil{2S}, 1\right\}\right\}
  \end{align}
  for all $k \geq 1$ $(\bar{t}_0 \equiv 0),$ and $c$ is a universal constant.
\end{restatable}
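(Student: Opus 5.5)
The proof splits into two independent parts. The first is the iteration-count guarantee, which Theorem~\ref{thm:main_theorem} already supplies. The second converts iteration counts into wall-clock time under the universal computation model. Since the algorithm, the parameters $\gamma = \nicefrac{1}{L_{\max}}$ and $S = \nicefrac{4\sigma^2}{\mu L_{\max}\varepsilon}$, and the assumptions are all unchanged, Theorem~\ref{thm:main_theorem} guarantees $\Exp{\norm{x^{K} - x_*^{K}}^2} \leq \varepsilon$ once $K = \ceil{c \frac{L_{\max}}{\mu}\log\frac{R^2}{\varepsilon}}$ for a suitable universal constant $c$. This bound does not depend on how the batch sizes $B_i^k$ came about. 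It only requires that in every iteration the aggregated batch satisfies $\sum_{i=1}^n B_i^k \geq S$, which is exactly what the while-loop condition with $w_i^k = \nicefrac{n}{\sum_i B_i^k}$ enforces. Indeed, $\left(\frac1n\sum_i (w_i^k)^2 B_i^k\right)^{-1} = \frac{\sum_i B_i^k}{n}$, so the loop stops as soon as $\sum_i B_i^k > S$. What remains is to bound the time at which iteration $K$ ends.

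We argue by induction on $k$ that iteration $k$ of \algname{Rennala SGD} finishes by time $\bar{t}_k$. The base case is $\bar{t}_0 = 0$. For the inductive step, suppose iteration $k-1$ finished at time $t' \leq \bar{t}_{k-1}$. At that moment the server broadcasts $x^{k}$ and every worker restarts, discarding unfinished work (line 12 of Algorithm~\ref{alg:alg_server_heterog}). We may assume that all workers are idle from $t'$ to $\bar{t}_{k-1}$ and use only their computation from $\bar{t}_{k-1}$ onward; this only makes the count smaller. By \eqref{eq:required_time}, worker $i$ then delivers at least $\flr{\int_{\bar{t}_{k-1}}^{t} v_i(\tau)\,d\tau}$ stochastic gradients at $x^k$ by time $t$. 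At $t = \bar{t}_k$ the total is therefore at least $\max\{\ceil{2S},1\} > S$, so the loop condition has already been violated and iteration $k$ has ended. The maximum with $1$ handles $S < 1$ (including $\sigma = 0$), where at least one gradient is still needed to take a step.

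The one step needing care is the claim that worker $i$ computes at least $\flr{\int_{a}^{b} v_i}$ gradients in any window $[a,b]$ even when it restarts at an arbitrary time. Work done before a restart is wasted, so a restart at $t'$ could destroy progress. We handle this by starting the count at the later time $\bar{t}_{k-1} \geq t'$: from there on no restart occurs until iteration $k$ ends, because the worker keeps computing at the same point $x^k$. By monotonicity of the integral, the count over $[t', t]$ is at least the count over $[\bar{t}_{k-1}, t]$. Combining the two parts, after $\bar{t}_K$ seconds with $K = \ceil{c\frac{L_{\max}}{\mu}\log\frac{R^2}{\varepsilon}}$ the method outputs the required point.
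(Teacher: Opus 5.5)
Your proposal is correct and follows essentially the same route as the paper. The paper also takes the iteration count $\cO\bigl(\frac{L_{\max}}{\mu}\log\frac{R^2}{\varepsilon}\bigr)$ from Theorem~\ref{thm:main_theorem} and argues recursively that iteration $k$ ends by $\bar{t}_k$: each worker restarts no later than $\bar{t}_{k-1}$, so it delivers at least $\flr{\int_{\bar{t}_{k-1}}^{t} v_i(\tau)\,d\tau}$ fresh gradients by time $t$, and $\max\{\ceil{2S},1\} > S$ forces the loop to exit. Your monotonicity remark for a restart at $t' \leq \bar{t}_{k-1}$, under the ``stop all workers'' reading of the algorithm, only makes explicit what the paper states in one line.
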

A similar result was obtained in \citep{tyurin2024tight}. However, \citet{tyurin2024tight} requires the equality of the functions $\{f_i\}.$

\section{Proof of the Main Results}
\label{sec:proofs}

\MAINTHEOREM*

\begin{proof}
  Let us define $x_*^{k}$ as an euclidean projection of the point $x^{k+1}$ on to the solution set of the main problem \eqref{eq:main_task}, and take the condition expectation $\ExpSub{k}{\cdot}$ w.r.t. the randomness from the iteration $k$ only.
  Then we have 
  \begin{align*}
    \ExpSub{k}{\norm{x^{k+1} - x_*^{k+1}}^2} \leq \ExpSub{k}{\norm{x^{k+1} - x_*^{k}}^2}
  \end{align*}
  due to the projection's properties. Then
  \begin{align*}
    &\ExpSub{k}{\norm{x^{k+1} - x_*^{k+1}}^2} \\
    &\leq \ExpSub{k}{\norm{x^{k} - \gamma \frac{1}{n} \sum_{i=1}^{n} w_i^k \sum_{j=1}^{B_i^k} \nabla f_i(x^k;\xi^k_{ij}) - x_*^{k}}^2} \\
    &= \norm{x^{k} - x_*^{k}}^2 - 2 \gamma \ExpSub{k}{\inp{x^{k} - x_*^{k}}{\frac{1}{n} \sum_{i=1}^{n} w_i^k \sum_{j=1}^{B_i^k} \nabla f_i(x^k;\xi^k_{ij})}} + \gamma^2 \ExpSub{k}{\norm{\frac{1}{n} \sum_{i=1}^{n} w_i^k \sum_{j=1}^{B_i^k} \nabla f_i(x^k;\xi^k_{ij})}^2}.
  \end{align*}
  Using unbiasedness (Assumption~\ref{ass:stochastic_variance_bounded}) and the variance decomposition equality, we get
  \begin{align*}
    &\ExpSub{k}{\norm{x^{k+1} - x_*^{k+1}}^2} \\
    &\leq \norm{x^{k} - x_*^{k}}^2 - 2 \gamma \inp{x^{k} - x_*^{k}}{\frac{1}{n} \sum_{i=1}^{n} w_i^k B_i^k \nabla f_i(x^k)} \\
    &\quad + \gamma^2 \norm{\frac{1}{n} \sum_{i=1}^{n} w_i^k B_i^k \nabla f_i(x^k)}^2 + \gamma^2 \ExpSub{k}{\norm{\frac{1}{n} \sum_{i=1}^{n} w_i^k \sum_{j=1}^{B_i^k} \left(\nabla f_i(x^k;\xi^k_{ij}) - \nabla f_i(x^k)\right)}^2}.
  \end{align*}
  Consider the last term, due to the independence of stochastic gradients and Assumption~\ref{ass:stochastic_variance_bounded}, we ensure that
  \begin{align*}
    &\ExpSub{k}{\norm{\frac{1}{n} \sum_{i=1}^{n} w_i^k \sum_{j=1}^{B_i^k} \left(\nabla f_i(x^k;\xi^k_{ij}) - \nabla f_i(x^k)\right)}^2} \\
    &=\frac{1}{n^2} \sum_{i=1}^{n} (w_i^k)^2 \sum_{j=1}^{B_i^k} \ExpSub{k}{\norm{\nabla f_i(x^k;\xi^k_{ij}) - \nabla f_i(x^k)}^2} \leq \frac{1}{n^2} \sum_{i=1}^{n} (w_i^k)^2 B_i^k \sigma^2.
  \end{align*}
  Thus
  \begin{align}
    &\ExpSub{k}{\norm{x^{k+1} - x_*^{k+1}}^2} \label{eq:iHCtnSUNn}\\
    &\leq \norm{x^{k} - x_*^{k}}^2 - \frac{2 \gamma}{n} \sum_{i=1}^{n} w_i^k B_i^k \inp{x^{k} - x_*^{k}}{\nabla f_i(x^k)} + \gamma^2 \norm{\frac{1}{n} \sum_{i=1}^{n} w_i^k B_i^k \nabla f_i(x^k)}^2 + \frac{\gamma^2}{n^2} \sum_{i=1}^{n} (w_i^k)^2 B_i^k \sigma^2. \nonumber
  \end{align}
  We now consider the second and the third term. Since $\frac{1}{n} \sum_{i=1}^{n} w_i^k B_i^k = 1,$ using Jensen's inequality, we get
  \begin{align*}
    &- 2 \gamma \inp{x^{k} - x_*^{k}}{\frac{1}{n} \sum_{i=1}^{n} w_i^k B_i^k \nabla f_i(x^k)} + \gamma^2 \norm{\frac{1}{n} \sum_{i=1}^{n} w_i^k B_i^k \nabla f_i(x^k)}^2 \\
    &\leq - 2 \gamma \inp{x^{k} - x_*^{k}}{\frac{1}{n} \sum_{i=1}^{n} w_i^k B_i^k \nabla f_i(x^k)} + \gamma^2 \frac{1}{n} \sum_{i=1}^{n} w_i^k B_i^k \norm{\nabla f_i(x^k)}^2.
  \end{align*}
  Due to Assumption~\ref{ass:inter_strong}, we get
  \begin{align*}
    &\frac{1}{n} \sum_{i=1}^{n} w_i^k B_i^k \norm{\nabla f_i(x^k)}^2 = \frac{1}{n} \sum_{i=1}^{n} w_i^k B_i^k \norm{\nabla f_i(x^k) - \nabla f_i(x_*^{k})}^2 \\
    &\leq \frac{1}{n} \sum_{i=1}^{n} w_i^k B_i^k L_i \inp{x^k - x_*^{k}}{\nabla f_i(x^k) - \nabla f_i(x_*^{k})} \\
    &\leq L_{\max} \frac{1}{n} \sum_{i=1}^{n} w_i^k B_i^k \inp{x^k - x_*^{k}}{\nabla f_i(x^k) - \nabla f_i(x_*^{k})} \\
    &= L_{\max} \frac{1}{n} \sum_{i=1}^{n} w_i^k B_i^k \inp{x^k - x_*^{k}}{\nabla f_i(x^k)}.
  \end{align*}
  In the first inequality, we use Lemma~\ref{lemma:lipt_func} under Assumption~\ref{ass:lipschitz_constant} and convexity (Assumption~\ref{ass:convex}). In the second inequality, we use the bound $L_i \leq L_{\max}$ for all $i \in [n].$ Taking $\gamma \leq \nicefrac{1}{L_{\max}}$ and substituting the last inequality to \eqref{eq:iHCtnSUNn}, we obtain
  \begin{align*}
    &\ExpSub{k}{\norm{x^{k+1} - x_*^{k+1}}^2} \\
    &\leq \norm{x^{k} - x_*^{k}}^2 - (2 \gamma - L_{\max} \gamma^2)\frac{1}{n} \sum_{i=1}^{n} w_i^k B_i^k \inp{x^{k} - x_*^{k}}{\nabla f_i(x^k)} + \frac
    {\gamma^2}{n^2} \sum_{i=1}^{n} (w_i^k)^2 B_i^k \sigma^2 \\
    &\leq \norm{x^{k} - x_*^{k}}^2 - \gamma \frac{1}{n} \sum_{i=1}^{n} w_i^k B_i^k \inp{x^{k} - x_*^{k}}{\nabla f_i(x^k)} + \frac
    {\gamma^2}{n^2} \sum_{i=1}^{n} (w_i^k)^2 B_i^k \sigma^2.
  \end{align*}
  Using the convexity, Assumption~\ref{ass:pl_condition}, and Lemma~\ref{lemma:star_strongly_convex}, we get
  \begin{align*}
    &\ExpSub{k}{\norm{x^{k+1} - x_*^{k+1}}^2} \\
    &\leq \norm{x^{k} - x_*^{k}}^2 - \frac{\gamma \mu}{2} \frac{1}{n} \sum_{i=1}^{n} w_i^k B_i^k \norm{x^{k} - x_*^{k}}^2 + \frac
    {\gamma^2}{n^2} \sum_{i=1}^{n} (w_i^k)^2 B_i^k \sigma^2.
  \end{align*}
  We take $w_i^k = \nicefrac{n}{\sum_{i=1}^n B_i^k}$ in the theorem for all $i \in [n].$ Thus
  \begin{align*}
    \ExpSub{k}{\norm{x^{k+1} - x_*^{k+1}}^2} \leq \left(1 - \frac{\gamma \mu}{2}\right)\norm{x^{k} - x_*^{k}}^2 + \frac
    {\gamma^2 \sigma^2}{\sum_{i=1}^n B_i^k}.
  \end{align*}
  In Algorithm~\ref{alg:alg_server_heterog}, with the chosen weights $\{w_i^k\}$, we wait for the moment when $\sum_{i=1}^n B_i^k > S.$ Thus
  \begin{align*}
    \ExpSub{k}{\norm{x^{k+1} - x_*^{k+1}}^2} \leq \left(1 - \frac{\gamma \mu}{2}\right)\norm{x^{k} - x_*^{k}}^2 + \frac
    {\gamma^2 \sigma^2}{S}.
  \end{align*}
  Unrolling the recursion and taking the full expectation, we obtain
  \begin{align*}
    \Exp{\norm{x^{k+1} - x_*^{k+1}}^2} 
    &\leq \left(1 - \frac{\gamma \mu}{2}\right)^{k + 1}\norm{x^{0} - x_*^{0}}^2 + \sum_{j=0}^{k} \left(1 - \frac{\gamma \mu}{2}\right)^{j} \frac{\gamma^2 \sigma^2}{S} \\
    &\leq \left(1 - \frac{\gamma \mu}{2}\right)^{k + 1}\norm{x^{0} - x_*^{0}}^2 + \frac{2 \gamma \sigma^2}{\mu S}.
  \end{align*}
  Due the choice of $\gamma,$ $S,$ and the condition on $k,$ we have $\Exp{\norm{x^{k+1} - x_*^{k+1}}^2} \leq \varepsilon.$

  It is sufficient to run the method for
  \begin{align*}
    \cO\left(\frac{L_{\max}}{\mu} \log \frac{R^2}{\varepsilon}\right)
  \end{align*}
  iterations. In each iteration, the method has to ensure that $\sum_{i=1}^n B_i^k > S.$ A sufficient time for that is 
  \begin{align*}
    2 \min\limits_{m \in [n]} \left[\left(\frac{1}{m} \sum\limits_{i=1}^{m} \frac{1}{\tau_{i}}\right)^{-1} \left(1 + \frac{4 \sigma^2}{m L_{\max} \varepsilon \mu}\right)\right].
  \end{align*}
  under the fixed computation model (see Theorem~11 in \citep{tyurin2024freya}).
\end{proof}

\MAINTHEOREMARBITRARILY*

\begin{proof}
  From the proof of Theorem~\ref{thm:main_theorem}, we know that it is sufficient to run the method for
  \begin{align*}
    c \times \frac{L_{\max}}{\mu} \log \frac{R^2}{\varepsilon}
  \end{align*}
  iterations, where $c$ is a universal constant. The method waits the moment when $\sum_{i=1}^n B_i^k > S$ in each iteration. The workers work in parallel, and for all $i \in [n],$ will calculate 
  \begin{align*}
    \flr{\int_{0}^{t} v_i(\tau) d \tau}
  \end{align*}
  stochastic gradients after $t$ seconds. In total, all workers will calculate $\sum_{i=1}^n \flr{\int_{0}^{t} v_i(\tau) d \tau}$ stochastic gradients. Hence, the first iteration will end by
  \begin{align*}
    \bar{t}_1 \eqdef \min\left\{t \geq 0 \, : \, \sum_{i=1}^n \flr{\int_{0}^{t} v_i(\tau) d \tau} \geq \max\left\{\ceil{2S},1\right\}\right\},
  \end{align*}
  seconds. 
  After that, the second iteration starts before time $\bar{t}_1$ and ends no later than time
  \begin{align*}
    \bar{t}_2 \eqdef \min\left\{t \geq 0 \, : \, \sum_{i=1}^n \flr{\int_{\bar{t}_1}^{t} v_i(\tau) d \tau} \geq \max\left\{\ceil{2S},1\right\}\right\},
  \end{align*}
  because worker $i$ can calculate at least 
  \begin{align*}
    \flr{\int_{\bar{t}_1}^{t} v_i(\tau) d \tau}
  \end{align*}
  stochastic gradients between the end of the first iteration and a time $t.$ Using the same reasoning, we can recursively define 
  $$\bar{t}_3, \dots, \bar{t}_{\ceil{c \times \frac{L_{\max}}{\mu} \log \frac{R^2}{\varepsilon}}}.$$ The algorithm will converge by $\bar{t}_{\ceil{c \times \frac{L_{\max}}{\mu} \log \frac{R^2}{\varepsilon}}}$ seconds due to the discussion at the beginning of the theorem.
\end{proof}

\section{Proof of Lower Bounds}

\FIRSTORDER*

\begin{proof}
We assume that $\varphi_i \in [-1, 1]$ for all $i \in [n]$ and define them later. The first-order similarity of these functions is 
\begin{align*}
    \max_{x \in \R^n} \max_{i,j \in [n]} \norm{\nabla f_i(x) - \nabla f_j(x)}^2 \leq 2 \beta^2.
\end{align*}
Thus, the parameter $\beta$ from the construction controls this similarity. Taking $\beta$ small, we increase similarity between the functions. Notice that the second-order similarity between the functions is zero since $\nabla^2 f_i(x) = \mu \mI$ for all $i \in [n].$ The optimal point is $x^*$ such that $x^*_i = \frac{\beta \varphi_i}{\mu n}$ and $\norm{x^0 - x^*}^2 \leq \frac{\beta^2}{\mu^2 n} \leq R^2.$

Let $N_i(t) \eqdef \left\lfloor\frac{t}{\tau_i}\right\rfloor$ denote an upper bound on the number of stochastic gradients returned by
worker $i$ by time $t$.
For every stochastic gradient returned by worker $i$, the method gets
\begin{align}
  \nabla f_i(\bar{x};\xi) = \mu \bar{x} - \beta \varphi_i e_i + \xi e_i = \mu \bar{x} - \beta \left(\varphi_i - \frac{\xi}{\beta}\right) e_i
\end{align}
where $\bar{x}$ is a query point and $\xi \sim \mathcal N(0,\sigma^2).$
Therefore, all the information obtained from worker $i$ about
$\varphi_i$ consists of at most $N_i(t)$ independent observations
from $\mathcal N(\varphi_i,\frac{\sigma^2}{\beta^2})$.

Let $\bar{x}_t$ be the point returned by the algorithm at time $t$ for the objective parametrized by $\varphi.$ Then,
\begin{align*}
  \norm{\bar{x}_t - x^*}^2 = \sum_{i=1}^n \left((\bar{x}_t)_i - \frac{\beta \varphi_i}{\mu n}\right)^2 = \frac{\beta^2}{\mu^2 n^2} \sum_{i=1}^n \left(\frac{\mu n}{\beta}(\bar{x}_t)_i - \varphi_i\right)^2.
\end{align*}
Thus, we reduced the problem to estimating $\varphi_i$ using the observations $\mathcal N(\varphi_i,\frac{\sigma^2}{\beta^2}).$ Using the classical statistical result (e.g., Example 15.4 from \citep{wainwright2019high}), for a universal constant $c_2>0,$
\begin{align*}
  \sup_{\varphi \in [-1, 1]^n} \Exp{\norm{\bar{x}_t - x^*}^2} \geq c_2 \frac{\beta^2}{\mu^2 n^2} \sum_{i=1}^n \min\left\{1, \frac{\sigma^2}{\beta^2 N_i(t)}\right\},
\end{align*}
where $\bar{x}_t$ depends on $\varphi.$ Thus,
\begin{align*}
  \sup_{\varphi \in [-1, 1]^n} \Exp{\norm{\bar{x}_t - x^*}^2} \geq c_2 \frac{\beta^2}{\mu^2 n^2} \sum_{i=1}^n \min\left\{1, \frac{\sigma^2\tau_i}{\beta^2 t}\right\}.
\end{align*}
We take
\begin{align*}
  T=c_1 \frac{\sigma^2}{\mu^2n^2\varepsilon}\sum_{i=1}^n\tau_i,
\end{align*}
where $c_1$ is a universal constant. 
Then,
\begin{align*}
  \sup_{\varphi \in [-1, 1]^n} \Exp{\norm{\bar{x}_t - x^*}^2} \geq c_2 \frac{\beta^2}{\mu^2 n^2} \sum_{i=1}^n \min\left\{1, \frac{\sigma^2\tau_i}{\beta^2 T}\right\}
\end{align*}
due to the bound on $t$. The condition on $\varepsilon$ ensures that $\frac{\sigma^2\tau_i}{\beta^2 T}\leq 1$ for all $i \in [n].$ Therefore, 
\begin{align}
\label{eq:TSJqq}
\sup_{\varphi \in [-1, 1]^n} \Exp{\norm{\bar{x}_t - x^*}^2} \geq 2\varepsilon.
\end{align}
Finally, $\Exp{\norm{\bar{x}_t - x^*}^2} > \varepsilon$ after $t$ seconds for some $\varphi$ due to \eqref{eq:TSJqq}. The adversary can choose this $\varphi.$
\end{proof}

\WEAKNEEDED*

\begin{proof}
  Without loss of generality, assume that the method starts at $0.$
  Let \(S_\tau = \sum_{i=1}^n \tau_i\) and \(a_i = R\sqrt{\tau_i/S_\tau}\). For all \(i \in [n]\), \(\varphi_i \in [-a_i,a_i]\) is defined later, and
  \begin{align*}
    f_i(x) = \frac{n\mu}{2}\inp{x-\varphi}{e_i}^2,
    \qquad
    \nabla f_i(x;\xi_i) = n\mu\inp{x-\varphi}{e_i}e_i + \xi_i e_i,
  \end{align*}
  where \(\xi_i \sim \mathcal N(0,\sigma^2)\). The average function is
  \begin{align*}
    f(x) = \frac{\mu}{2}\norm{x-\varphi}^2,
  \end{align*}
  and its unique minimizer is \(x^*=\varphi\). Moreover, \(\norm{\varphi}\leq R\).

  Each \(f_i\) is convex and smooth with parameter $n\mu \leq L_{\max}$, and
  \begin{align*}
    \norm{\nabla f_i(x)}^2 = 2n\mu f_i(x) \geq 2\mu f_i(x).
  \end{align*}
  Hence Assumptions~\ref{ass:convex}, \ref{ass:lipschitz_constant}, and \ref{ass:pl_condition} hold. The function \(f\) is smooth with parameter $\mu \leq L_{\max}$ and satisfies the global P\L\,condition with constant \(\mu\). Assumption~\ref{ass:inter} holds because \(\varphi\) minimizes every \(f_i\), while Assumption~\ref{ass:inter_strong} does not hold in general because
  \begin{align*}
    \operatorname*{arg\,min} f_i = \{x \in \R^n : x_i=\varphi_i\} \neq \{\varphi\} = \operatorname*{arg\,min} f.
  \end{align*}
  Finally, the stochastic gradients are unbiased and have variance \(\sigma^2\).

  The rest of the proof is the same as the proof of Theorem~\ref{thm:divergence_first}. Every stochastic gradient returned by worker $i$ gives an observation from $\mathcal N(\varphi_i,\sigma^2/(n^2\mu^2)).$ Let $N_i(t) \eqdef \lfloor t/\tau_i\rfloor$ be an upper bound on the number of stochastic gradients calculated by worker $i$ by time $t.$ The same Gaussian mean-estimation lower bound gives
  \begin{align*}
    \sup_{\substack{\varphi_i\in[-a_i,a_i]\\i\in[n]}}
    \Exp{\norm{\bar{x}_t-\varphi}^2}
    &\geq c_1\sum_{i=1}^n
    \min\left\{a_i^2,\frac{\sigma^2}{n^2\mu^2N_i(t)}\right\} \\
    &\geq c_1\min\left\{R^2,\frac{\sigma^2S_\tau}{n^2\mu^2t}\right\} \geq 2 \varepsilon
  \end{align*}
  for a universal constant $c_1>0,$ where $\bar{x}_t$ any possible query point by time $t$ for the objective with $\varphi.$ The second inequality follows from $N_i(t)\leq t/\tau_i$ and $a_i^2=R^2\tau_i/S_\tau,$ which imply
  \begin{align*}
    \min\left\{a_i^2,\frac{\sigma^2}{n^2\mu^2N_i(t)}\right\}
    \geq \frac{\tau_i}{S_\tau}\min\left\{R^2,\frac{\sigma^2S_\tau}{n^2\mu^2t}\right\},
  \end{align*}
  where remains to sum over $i$ and use $\sum_{i=1}^n\tau_i/S_\tau=1.$
  The third inequality follows from $\varepsilon<0.01,$ $R>10,$ and the condition on $t.$ Finally,
  \begin{align*}
    \Exp{\norm{\bar{x}_t-\varphi}^2} > \varepsilon
  \end{align*}
  for some $\varphi_i\in[-a_i,a_i],$ which the adversary can choose.
\end{proof}

  \ALLNEEDED*

  \begin{proof}
    The theorem is a simple corollary of Theorem~\ref{thm:divergence_2} because Theorem~\ref{thm:divergence_2} is stated under more strict assumptions on the class of the functions and stochastic gradients. The result of Theorem~\ref{thm:divergence_2} holds even under additional Assumption~\ref{ass:pl_condition}.
  \end{proof}

  \PLNEEDED*

  \begin{proof}
    Without loss of generalization, assume that the method starts at $0.$ Let $S_\tau=\sum_{i=1}^n\tau_i,$ take
    \begin{align*}
      a_i=\frac{n\mu\tau_i}{S_\tau},
      \qquad
      f_i(x)=\frac{a_i}{2}(x-\varphi)^2,
      \qquad
      \nabla f_i(x;\xi_i)=a_i(x-\varphi)+\xi_i,
    \end{align*}
    where $\xi_i\sim\mathcal N(0,\sigma^2)$ and $\varphi \in [-R,R]$ is defined later. Since $\frac1n\sum_{i=1}^na_i=\mu,$
    \begin{align*}
      f(x)=\frac{\mu}{2}(x-\varphi)^2.
    \end{align*}
    Thus, $f$ satisfies the global P\L\,condition with constant $\mu,$ and every $f_i$ is convex and $a_i$--smooth with $a_i\leq n\mu\leq L_{\max}.$ Moreover, all the functions have the same unique minimizer $\varphi,$ so Assumptions~\ref{ass:inter} and \ref{ass:inter_strong} hold. Notice that $\norm{\nabla f_i(x)}^2=2a_i(f_i(x)-f_i^*).$ 
    Finally, the stochastic gradients are unbiased and have variance $\sigma^2.$

    Every stochastic gradient returned by worker $i$ gives an observation from $\mathcal N(\varphi,\sigma^2/a_i^2).$ Let $N_i(t) \eqdef \lfloor t/\tau_i\rfloor$ be an upper bound on the number of stochastic gradients computed by worker $i$ by time $t.$ Let $P_\varphi^t$ be the joint distribution of these observations by time $t.$ 
    For Gaussians with the same variance, the KL divergence $D_{\mathrm{KL}}(\mathcal N(m_1,v)\,\|\,\mathcal N(m_0,v))=(m_1-m_0)^2/(2v).$ 
  By the chain rule for KL divergence,
  \begin{align*}
  D_{\mathrm{KL}}\!\left(P_\varphi^t\,\middle\|\,P_{-\varphi}^t\right)
  &\leq \sum_{i=1}^{n} \frac{2 \varphi^2 a_i^2}{\sigma^2} \times \frac{t}{\tau_i} = \frac{2 t \varphi^2 n^2 \mu^2}{\sigma^2 S_{\tau}}
  \end{align*}
  because $N_i(t) \leq \frac{t}{\tau_i}$.
  Since
    $t \leq \frac{1}{100} \left(\frac1n\sum_{i=1}^n\tau_i\right) \frac{\sigma^2}{\varepsilon n\mu^2},$
  we get
  \begin{align*}
    D_{\mathrm{KL}}\!\left(P_\varphi^t\,\middle\|\,P_{-\varphi}^t\right) \leq \frac{\varphi^2}{32 \varepsilon}.
  \end{align*}
  Choosing any
  \begin{align*}
    \varphi \in \{-4 \sqrt{\varepsilon}, 4 \sqrt{\varepsilon}\},
  \end{align*} we ensure that $-\varphi,\varphi\in[-R,R]$ (since $\varepsilon<0.01$ and $R>10$) and $D_{\mathrm{KL}}\!\left(P_\varphi^t\,\middle\|\,P_{-\varphi}^t\right) \leq \frac{1}{2}.$
  Pinsker's inequality gives $\norm{P_\varphi^t-P_{-\varphi}^t}_{\mathrm{TV}}\leq 1/2.$ Hence, the two-point Le Cam bound \citep{wainwright2019high} gives
    \begin{align*}
      \max_{\varphi \in \{-4 \sqrt{\varepsilon},4 \sqrt{\varepsilon}\}}
      \Exp{\abs{\bar{x}_t - \varphi}^2}
      \geq 4 \varepsilon,
    \end{align*}
    where $\bar{x}_t$ is the random variable that the random algorithm can produce based on all available information up to time $t$ for the objective with parameter $\varphi.$ Thus, the adversary can take $\varphi = \arg\max_{\varphi \in \{-4 \sqrt{\varepsilon},4 \sqrt{\varepsilon}\}} \Exp{\abs{\bar{x}_t - \varphi}^2}$ to get $\Exp{\abs{\bar{x}_t - \varphi}^2} > \varepsilon.$
  \end{proof}

\section{Auxiliary Results}
In this section, we present well-known results from optimization.

\begin{lemma}[\cite{nesterov2018lectures}]
  \label{lemma:lipt_func}
  Let $f:\R^d\to \R$ be a function, which $L$--smooth and convex. Then for all $x, y\in\R^d$ we have:
  \begin{align}
    \norm{\nabla f(x) - \nabla f(y)}^2 \leq L \inp{\nabla f(x) - \nabla f(y)}{x - y}.
\end{align}
\end{lemma}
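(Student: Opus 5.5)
This is the co-coercivity inequality for convex $L$--smooth functions. The plan is to reduce it to a one-point statement about an auxiliary convex function, following the standard argument in \citep{nesterov2018lectures}.

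Fix $y$ and define
\begin{align*}
\phi_y(x) \eqdef f(x) - \inp{\nabla f(y)}{x}.
\end{align*}
This function is convex and $L$--smooth, since it differs from $f$ by a linear term. Its gradient $\nabla f(x) - \nabla f(y)$ vanishes at $x=y$, so by convexity $y$ is a global minimizer of $\phi_y$.

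The key intermediate inequality is that, for any convex $L$--smooth $\phi$ with minimizer $x_\phi$,
\begin{align*}
\phi(x_\phi) \leq \phi(x) - \frac{1}{2L}\norm{\nabla \phi(x)}^2 .
\end{align*}
To prove it, apply the descent lemma $\phi(z) \leq \phi(x) + \inp{\nabla\phi(x)}{z-x} + \frac{L}{2}\norm{z-x}^2$, which follows from $L$--smoothness by integrating the gradient along the segment. Take $z = x - \frac1L\nabla\phi(x)$ and use $\phi(x_\phi)\leq\phi(z)$.

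Applying this to $\phi_y$ gives
\begin{align*}
f(y) - \inp{\nabla f(y)}{y} \leq f(x) - \inp{\nabla f(y)}{x} - \frac{1}{2L}\norm{\nabla f(x)-\nabla f(y)}^2,
\end{align*}
that is,
\begin{align*}
f(x) \geq f(y) + \inp{\nabla f(y)}{x-y} + \frac{1}{2L}\norm{\nabla f(x)-\nabla f(y)}^2 .
\end{align*}
Writing the same inequality with $x$ and $y$ swapped and adding the two, the function values cancel. This leaves
\begin{align*}
\inp{\nabla f(x)-\nabla f(y)}{x-y} \geq \frac1L\norm{\nabla f(x)-\nabla f(y)}^2,
\end{align*}
and multiplying by $L$ gives the claim.

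No step is a real obstacle. The only place where care is needed is the minimizer argument: one must use that $\phi_y$ is convex, so that a stationary point is a global minimum, which justifies $\phi_y(y)\leq\phi_y(z)$ for the chosen point $z$. This is exactly where convexity enters; smoothness alone would not suffice.
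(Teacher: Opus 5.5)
Your proof is correct and is the standard argument from \citep{nesterov2018lectures}, which the paper cites without giving a proof of its own: shift $f$ by the linear term $\inp{\nabla f(y)}{x}$, use convexity to make $y$ a global minimizer, apply $\phi(x_\phi) \leq \phi(x) - \frac{1}{2L}\norm{\nabla \phi(x)}^2$ via the descent lemma, and symmetrize. The only case you leave uncovered is $L = 0$, where the step $z = x - \frac{1}{L}\nabla\phi(x)$ is undefined, but there $\nabla f$ is constant and both sides are zero.
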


\begin{lemma}[\cite{karimi2016linear}]
  \label{lemma:star_strongly_convex}
  Let $f:\R^d\to \R$ be a convex function, which satisfies P\L\,condition with a parameter $\mu$ (Assumption~\ref{ass:pl_global}). Then, for all $x \in \R^d,$ we have
  \begin{align}
      \inp{\nabla f(x)}{x - \bar{x}_*} \geq \frac{\mu}{2} \norm{\bar{x}_* - x}^2
  \end{align}
  where $\bar{x}_*$ is the projection of $x$ onto the solution set of $\min\limits_{x \in \R^d} f(x).$
\end{lemma}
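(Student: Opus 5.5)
The plan is to split the inequality into two steps. First, the P\L\ condition gives a \emph{quadratic growth} bound:
\begin{align*}
f(x) - f^* \geq \frac{\mu}{2}\norm{x - \bar{x}_*}^2 .
\end{align*}
Second, convexity converts this function-value gap into the inner product. That second step is immediate. Since $\bar{x}_*$ is a minimizer, $f(\bar{x}_*) = f^*$, and the first-order characterization of convexity gives $f^* = f(\bar{x}_*) \geq f(x) + \inp{\nabla f(x)}{\bar{x}_* - x}$. Rearranging,
\begin{align*}
\inp{\nabla f(x)}{x - \bar{x}_*} \geq f(x) - f^* .
\end{align*}
Combining this with quadratic growth yields the claim. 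The solution set is nonempty, closed and convex (the sublevel set of a continuous convex function at its minimum value), so the projection $\bar{x}_*$ is well defined.

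The main work is the implication P\L\ $\Rightarrow$ quadratic growth. I would follow the gradient-flow argument of \citet{karimi2016linear}. Fix $x$ with $f(x) > f^*$; the case $f(x) = f^*$ is trivial because then $x = \bar{x}_*$. Consider the flow $\dot{y}(t) = -\nabla f(y(t))$ with $y(0) = x$. It is well posed because $\nabla f$ is Lipschitz in the settings where the lemma is used (Assumptions~\ref{ass:global_lipschitz_constant} and \ref{ass:lipschitz_constant}). Along the flow,
\begin{align*}
\frac{d}{dt}\bigl(f(y(t)) - f^*\bigr) = -\norm{\nabla f(y(t))}^2 \leq -2\mu\bigl(f(y(t)) - f^*\bigr),
\end{align*}
so $f(y(t)) \to f^*$ exponentially fast.

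Next set $g(y) = \sqrt{f(y) - f^*}$ wherever $f(y) > f^*$. Then $\nabla g = \nabla f / (2g)$, and the P\L\ condition gives $\norm{\nabla g} \geq \sqrt{\mu/2}$. Since $\dot{y}$ is antiparallel to $\nabla g$,
\begin{align*}
\frac{d}{dt} g(y(t)) = -\norm{\nabla g(y(t))}\,\norm{\dot{y}(t)} \leq -\sqrt{\mu/2}\,\norm{\dot{y}(t)}.
\end{align*}
Integrating, the total path length satisfies $\int_0^\infty \norm{\dot{y}}\,dt \leq \sqrt{2/\mu}\, g(x)$. If the flow reaches $f^*$ in finite time, we stop it there; the bound is unchanged.

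Finite length implies that $y(t)$ converges to some point $y_\infty$. By continuity $f(y_\infty) = f^*$, so $y_\infty$ lies in the solution set. Hence
\begin{align*}
\norm{x - \bar{x}_*} \leq \norm{x - y_\infty} \leq \sqrt{2/\mu}\,\sqrt{f(x) - f^*},
\end{align*}
which is exactly quadratic growth with constant $\mu/2$.

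I expect the only delicate step to be this flow argument: well-posedness, handling the moment $g$ reaches $0$, and passing to the limit. If one prefers to avoid ODEs, the fallback is to cite Theorem~2 of \citet{karimi2016linear} (PL $\Rightarrow$ EB $\Rightarrow$ QG) directly and then apply the convexity step above.
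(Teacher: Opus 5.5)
Your proof is correct and follows the standard route behind the paper's citation to \citet{karimi2016linear} (the paper gives no proof of its own): P\L\ $\Rightarrow$ quadratic growth via the finite-length gradient flow for $\sqrt{f-f^*}$, then the convexity inequality $\inp{\nabla f(x)}{x-\bar{x}_*} \geq f(x)-f^*$, which is the same two-step pattern the paper itself uses in Section~\ref{sec:aux_imply}. One small refinement: you do not need Lipschitz gradients for the flow, because a differentiable convex function on $\R^d$ is automatically $C^1$, so Peano's theorem gives a local solution and your own path-length bound rules out blow-up, which proves the lemma in the generality in which it is stated.
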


\section{Lower Bound in the Heterogeneous Convex Setting}

This section complements the results from \citep{tyurin2023optimal}, where the authors only prove the optimal time complexities in the \emph{homogeneous nonconvex}, \emph{heterogeneous nonconvex}, and \emph{homogeneous convex} settings. Here, we resolve the last piece, the \emph{heterogeneous convex} setting. Following \citet{tyurin2023optimal}, we have to formalize and introduce the following protocol and classes.

We investigate the optimization problem \eqref{eq:main_task} when the function $f$ is convex. For the convex case, using Protocol~\ref{alg:time_multiple_oracle_protocol}, we use the complexity measure
\begin{align}\label{eq:lower_compl_time_convex}
    \begin{split}
    \squeeze
    &\mathfrak{m}_{\textnormal{time}}\left(\cA, \cF\right) \eqdef \inf_{A \in \cA} \inf\left\{t \geq 0\,\middle|\, \sup_{f \in \cF} \sup_{(O, \mathcal{D}) \in \cO(f)} \left(\Exp{f(x^{k(t)})} - \inf_{x \in Q} f(x)\right) \leq \varepsilon \right\},
\end{split}
\end{align}
where $x^k$ is generated by Protocol~\ref{alg:time_multiple_oracle_protocol}, $k(t)$ is the largest index such that $t^{k(t)} \leq t,$ and $Q$ is a convex set. Let us take any set $Q,$ and consider the following class of convex functions. 
\begin{protocol}[t]
  \caption{Time Multiple Oracles Protocol}
  \label{alg:time_multiple_oracle_protocol}
  \begin{algorithmic}[1]
  \STATE \textbf{Input: }function(s) $f \in \cF,$ oracles and distributions $((O_1, ..., O_n), (\mathcal{D}_1, ..., \mathcal{D}_n)) \in \cO(f),$ algorithm $A~\in~\cA$
  \STATE $s^0_i = 0$ for all $i \in [n]$
  \FOR{$k = 0, \dots, \infty$}
\STATE $({t^{k+1}}, i^{k+1}, x^k) = A^k(g^1, \dots, g^{k}),$ \hfill $\rhd \,{t^{k+1} \geq t^{k}}$
  \STATE $(s^{k+1}_{{i^{k+1}}}, g^{k+1}) = O_{i^{k+1}}({t^{k+1}}, x^k, s^{k}_{{i^{k+1}}}, \xi^{k+1}), \quad \xi^{k+1} \sim \mathcal{D}_{{i^{k+1}}}$ \hfill $\rhd\,s^{k+1}_j = s^{k}_j \quad \forall j \neq i^{k+1}$
  \ENDFOR
  \end{algorithmic}
\end{protocol}
\begin{definition}[Function Class $\cF^{\textnormal{conv}}_{Q, L}$]\ \\
    \leavevmode
    We assume that a function $f \,:\,\R^d \rightarrow \R$ is convex, differentiable, $L$-smooth on the set $Q$, i.e.,
    \begin{align*}
        \norm{\nabla f(x) - \nabla f(y)} \leq L \norm{x - y} \quad \forall x, y \in Q.
    \end{align*}
    A set of all functions with such properties we define as $\cF^{\textnormal{conv}}_{Q, L}.$
    \label{def:func_class_convex}
\end{definition}

\begin{definition}[Algorithm Class $\cA_{\textnormal{zr}}$]\ \\
  \label{def:time_algorithm}
  An algorithm $A = \{A^k\}_{k=0}^{\infty}$ is a sequence such that
  \begin{align*}
      \squeeze
      \markchanges
      &A^k\,:\, \underbrace{\R^d \times \dots \times \R^d}_{k \textnormal{ times}} \rightarrow {\R_{\geq 0}} \times \R^d \quad \forall k \geq 1, A^0 \in {\R_{\geq 0}} \times \R^d,
  \end{align*}
  and, for all $k \geq 1$ and $g^1, \dots, g^{k} \in \R^d,$
  $
  t^{k+1} \geq t^{k},
  $
  where $t^{k+1}$ and $t^{k}$ are defined as $(t^{k+1}, \cdot) = A^{k}(g^1, \dots, g^{k})$ and $(t^{k}, \cdot) = A^{k-1}(g^1, \dots, g^{k-1}).$ 
  Moreover, $x^k \in Q$ for all $k \geq 0.$
\end{definition}

The following oracle helps to formalize the fixed computation model.

\begin{align*}
  &O_{\tau}^{{{\scriptscriptstyle \nabla}f}}\,:\, \underbrace{\R_{\geq 0}}_{\textnormal{time}} \times \underbrace{\R^d}_{\textnormal{point}} \times \underbrace{(\R_{\geq 0} \times \R^d \times \{0, 1\})}_{\textnormal{input state}} \times \mathbb{S}_{\xi} \rightarrow \underbrace{(\R_{\geq 0} \times \R^d \times \{0, 1\})}_{\textnormal{output state}} \times \R^d
\end{align*}

\begin{align}
\begin{split}
  \label{eq:oracle_stochastic_delay}
  \squeeze
  &\textnormal{such that } O_{\tau}^{{{\scriptscriptstyle \nabla}f}}(t, x, (s_t, s_x, s_q), \xi) = \left\{
\begin{aligned}
  &((t, x, 1), &0), \qquad & s_q = 0, \\
  &((s_t, s_x, 1), &0), \qquad & s_q = 1 \textnormal{ and } t < s_t + \tau,\\
  &((0, 0, 0), & \nabla f(s_x; \xi)), \qquad & s_q = 1 \textnormal{ and } t \geq s_t + \tau,
\end{aligned}
\right.
\end{split}
\end{align}
and $\nabla f(\cdot;\cdot)$ is a stochastic mapping.

\begin{definition}[Oracle Class $\cO_{\tau_1, \dots, \tau_n}^{\textnormal{conv}, \sigma^2}$]\ \\
    Let us consider an oracle class such that, for any $f \in \cF^{\textnormal{conv}}_{Q, L},$ it returns oracles $O_i = O_{\tau_i}^{{{\scriptscriptstyle \nabla}f_i}}$ and distributions $\mathcal{D}_i$ for all $i \in [n],$ where $\nabla f_i(\cdot;\cdot)$ is an unbiased $\sigma^2$-variance-bounded mapping on the set $Q$ of the gradient of the local function in worker $i$. The oracles $O_{\tau_i}^{{{\scriptscriptstyle \nabla}f_i}}$ are defined in \eqref{eq:oracle_stochastic_delay}. We define such oracle class as $\cO_{\tau_1, \dots, \tau_n}^{\textnormal{conv}, \sigma^2}.$ Without loss of generality, we assume that $0 < \tau_1 \leq \dots \leq \tau_n.$ \label{def:oracle_class_convex}
\end{definition}
Notice that this oracle class differs from the oracle class for convex functions in \citep{tyurin2023optimal} because we consider the heterogeneous setting where the oracles return unbiased stochastic gradients of the local functions $f_i,$ which can be different. We refer the reader to \citep{tyurin2023optimal} for additional details about the time complexities formalization. We are now ready to state the theorem.

\begin{theorem}[Informal theorem (see the formal Theorem~\ref{theorem:lower_bound_homog_convex})]
  \label{thm:first_lower_bound}
  Let Assumptions~\ref{ass:convex}, \ref{ass:global_lipschitz_constant}, and \ref{ass:stochastic_variance_bounded} hold. It is impossible to converge faster than 
  \begin{align*}
    \Theta\left(\tau_n \nicefrac{\sqrt{L} R}{\sqrt{\varepsilon}} + \left(\nicefrac{1}{n} \sum_{i=1}^{n} \tau_{i}\right) \nicefrac{\sigma^2 R^2}{n \varepsilon^2}\right)
   \end{align*}
  seconds under the fixed computation model.
\end{theorem}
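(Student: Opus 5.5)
The lower bound is a maximum of two terms, so I will prove each separately and combine them with $\max\{a,b\} \geq (a+b)/2$. For every fixed algorithm in $\cA_{\textnormal{zr}}$ I will exhibit a hard instance $(f_1,\dots,f_n)$ with stochastic oracles from $\cO_{\tau_1,\dots,\tau_n}^{\textnormal{conv},\sigma^2}$. On that instance, $\Exp{f(x^{k(t)})} - \inf f > \varepsilon$ whenever $t$ is below a constant times the corresponding term.

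\textbf{Deterministic term $\tau_n \sqrt{L}R/\sqrt{\varepsilon}$.} I take $\sigma = 0$ and put all of the function on the slowest worker. Concretely, $f_n = n F$ and $f_i = 0$ for $i < n$, where $F$ is the classical Nesterov worst-case convex quadratic (chain function) with smoothness constant scaled so that $f = F$ is $L$--smooth. Workers $i<n$ then return no information at all, since their oracles output zero gradients. Worker $n$ returns at most $\flr{t/\tau_n}$ gradients by time $t$. Each returned gradient can reveal at most one new coordinate (the zero-respecting progress argument of \citet{tyurin2023optimal}), so after $t$ seconds only $\flr{t/\tau_n}$ coordinates can be nonzero. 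The standard bound for $F$ then gives $f(x) - f^* > \varepsilon$ unless $t/\tau_n \gtrsim \sqrt{L}R/\sqrt{\varepsilon}$. One detail needs care: scaling $f_n$ by $n$ makes it $nL$--smooth. That is allowed here, because the theorem assumes only global smoothness (Assumption~\ref{ass:global_lipschitz_constant}).

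\textbf{Statistical term $(\frac1n\sum_i\tau_i)\sigma^2R^2/(n\varepsilon^2)$.} I reuse the Gaussian-mean idea from the proofs of Theorems~\ref{thm:divergence_first} and \ref{thm:divergence_2}, now with function-value accuracy. The plan is to use $d=n$ and $f_i(x) = n\,h(x_i - \varphi_i)$, so that $f(x) = \sum_i h(x_i-\varphi_i)$ is separable and each worker $i$ observes only its own coordinate. Each observation is $\nabla f_i(x;\xi) = \nabla f_i(x) + \xi e_i$ with $\xi \sim \mathcal N(0,\sigma^2)$. As the one-dimensional loss I take the linear-like convex function $h(z) = \lambda_i \abs{z}$, Huber-smoothed at scale $\varepsilon$-small so that it stays $L$--smooth, with $\abs{\varphi_i} \leq a_i$. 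The parameters are $a_i^2 \propto R^2\tau_i/S_\tau$, where $S_\tau = \sum_i \tau_i$, so that $\norm{\varphi}\leq R$, and the slopes $\lambda_i$ are tuned per coordinate. Estimating the sign of $\varphi_i$ from $N_i(t) \leq t/\tau_i$ Gaussian samples with mean of size $n\lambda_i$ and variance $\sigma^2$ then costs a Le Cam two-point error, as in the proof of Theorem~\ref{thm:divergence_3}. The resulting per-coordinate suboptimality is at least $c\,\lambda_i a_i$ unless $t/\tau_i \gtrsim \sigma^2/(n^2\lambda_i^2)$.

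\textbf{Balancing step.} Summing the per-coordinate bounds and choosing $\lambda_i = \lambda$ and $a_i = R/\sqrt n$, I expect
\begin{align*}
\sum_i \lambda a_i \gtrsim \varepsilon \quad\text{and}\quad t \lesssim \min_i \tau_i\,\sigma^2/(n^2\lambda^2).
\end{align*}
This equal-coordinate choice only yields $\min_i\tau_i$, which is too weak. So the $\tau_i$-weighted allocation $a_i^2 \propto \tau_i$ and $\lambda_i \propto 1/\sqrt{\tau_i}$ is needed, mimicking the proof of Theorem~\ref{thm:divergence_2}, so that the sum telescopes into $S_\tau$ and gives $t \gtrsim S_\tau \sigma^2R^2/(n^2\varepsilon^2)$.

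\textbf{Main obstacle.} Getting the $R^2/\varepsilon^2$ scaling correct in function value, rather than the $1/\varepsilon$ scaling of the distance results, is the hard part. It requires the linear (non-strongly convex) local losses together with a careful choice of $\lambda_i$ and $a_i$ under both constraints: $\norm{\varphi}\le R$, and $L$--smoothness of $f$ after Huber smoothing. If the separable construction fails to capture this, my fallback is to embed the homogeneous convex stochastic lower bound of \citet{tyurin2023optimal}, applied to a single scaled worker, into each coordinate block and sum.
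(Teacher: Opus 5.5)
\textbf{Deterministic term.} Your treatment matches the paper's: Nesterov's hard quadratic scaled by $n$ on worker $n$, zero functions elsewhere, noiseless oracles, and the remark that only global smoothness is needed.

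\textbf{The gap: the slope allocation is backwards.} In the statistical term, your choice $\lambda_i \propto 1/\sqrt{\tau_i}$ points the wrong way, and with it the argument does not give the claimed bound. Your own per-coordinate condition says coordinate $i$ stays unidentified while $t/\tau_i \le c\,\sigma^2/(n^2\lambda_i^2)$. With $\lambda_i = \kappa/\sqrt{\tau_i}$ this reads $t \le c\,\sigma^2\tau_i^2/(n^2\kappa^2)$, which is tightest at $i=1$. The suboptimality becomes $\sum_i \lambda_i a_i = n\kappa R/\sqrt{S_\tau}$, not something proportional to $S_\tau$. Solving yields only $t \le c\,\tau_1^2\sigma^2R^2/(S_\tau\varepsilon^2)$, which is weaker even than the equal-slope bound you discarded.

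\textbf{The fix.} The slopes must \emph{grow} with $\tau_i$, so that every worker gets the same information budget. Fix the target time $t^\star$ and set $\lambda_i = \frac{\sigma}{2n}\sqrt{\tau_i/t^\star}$. The Huber derivative is bounded by $\lambda_i$, so each sample from worker $i$ has KL at most $(2n\lambda_i)^2/(2\sigma^2)$ between $\varphi_i = \pm a_i$, whatever the adaptive query. Worker $i$'s total KL is then at most $\frac{t^\star}{\tau_i}\cdot\frac{2n^2\lambda_i^2}{\sigma^2} = \frac12$ for every $i$. With $a_i = R\sqrt{\tau_i/S_\tau}$ you get $\sum_i\lambda_i a_i = \sigma R\sqrt{S_\tau}/(2n\sqrt{t^\star})$. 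This exceeds a constant multiple of $\varepsilon$ exactly when $t^\star \le c\,\sigma^2R^2S_\tau/(n^2\varepsilon^2)$, which is the second term. So the $R^2/\varepsilon^2$ scaling is not an obstacle once the slopes scale like $\sqrt{\tau_i}$, and your fallback is unnecessary.

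\textbf{Details to make explicit.}
\begin{itemize}
\item The Huber scale must be $\delta_i = \lambda_i/L$ for $f$ to be $L$--smooth, and you need $\delta_i \le a_i/2$. At $t^\star$ this reduces to $\varepsilon \le c'\,LR^2$, which is exactly the hypothesis $\sqrt{L}R > c_1\sqrt{\varepsilon}$ of Theorem~\ref{theorem:lower_bound_homog_convex}.
\item The algorithm's estimate of $x_i$ may use other workers' samples. So combine the coordinates via Assouad's lemma (product prior on $\prod_i\{\pm a_i\}$; flipping $\varphi_i$ changes only worker $i$'s conditional laws), rather than adding separate two-point bounds.
\item A bound at $t^\star$ covers all $t \le t^\star$, since fewer samples only lower the KL.
\end{itemize}

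\textbf{Comparison with the paper.} With this correction your route is valid but differs from the paper's. The paper works on the box $Q=\{x : |x_i|\le R_i\}$ with $R_i = R\sqrt{\tau_i/S_\tau}$, the same allocation as your $a_i$. It takes $f_i(x) = g_i(x_i)$, where $g_i$ is the linear hard instance of Lemma~10 in \citet{woodworth2018graph} with $N = \lfloor \bar t/\tau_i\rfloor$, and then sums $\frac1n\sum_i \sigma R_i/(8\sqrt{N_i})$. The slope of that instance scales like $\sigma/\sqrt{N_i}\propto\sqrt{\tau_i}$, i.e., your corrected $\lambda_i$ in disguise. The paper needs no smoothing ($f$ is $0$--smooth) and no hand-made information computation. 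However, it relies on the constrained formalism (iterates in $Q$, suboptimality against $\inf_{x\in Q} f$), since a linear function has no minimizer on $\R^n$. Your construction is self-contained and unconstrained, with an interior minimizer, so Assumption~\ref{ass:convex} holds literally on $\R^n$. The cost is the smoothing and the regime condition $\varepsilon \le c'\,LR^2$.
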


\begin{restatable}{theorem}{THEOREMLOWERBOUNDCONVEX}
    \label{theorem:lower_bound_homog_convex}
    Let us consider the oracle class $\cO_{\tau_1, \dots, \tau_n}^{\textnormal{conv}, \sigma^2}$ for some $\sigma^2 >0$ and $0 < \tau_1 \leq \dots \leq \tau_n.$ We fix any $R, L, \varepsilon > 0$ such that $\sqrt{L} R > c_1 \sqrt{\varepsilon} > 0.$ For any 
   $$ t \leq c \times \left[\tau_n \frac{\sqrt{L} R}{\sqrt{\varepsilon}} + \left(\frac{1}{n} \sum\limits_{i=1}^{n} \tau_{i}\right) \frac{\sigma^2 R^2}{n \varepsilon^2}\right],$$ 
    in the view Protocol~\ref{alg:time_multiple_oracle_protocol}, for any algorithm $A \in \cA_{\textnormal{zr}},$ there exists a set $Q,$ a function $f \in \cF^{\textnormal{conv}}_{Q, L}$ and oracles and distributions $((O_1, \dots, O_n), (\mathcal{D}_1, \dots, \mathcal{D}_n)) \in \cO_{\tau_1, \dots, \tau_n}^{\textnormal{conv}, \sigma^2}(f)$
    such that
        $$\Exp{f(x^{k(t)})} - \inf_{x \in Q} f(x) > \varepsilon,$$
    where $k(t)$ is the largest index such that $t^{k(t)} \leq t,$
    and $R$ is the euclidean distance between $0$ (starting point) and the closest solution $x_* \in Q.$ 
    The quantities $c_1,$ and $c$ are {\markchanges universal} constants.
\end{restatable}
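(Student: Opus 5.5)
The lower bound is a maximum of two terms, so I would prove it as two separate lower bounds and combine them via $\max\{a,b\}\geq (a+b)/2$, adjusting the constant $c$. For each term I will build a hard instance $f\in\cF^{\textnormal{conv}}_{Q,L}$ together with local functions $f_i$ and oracles in $\cO_{\tau_1,\dots,\tau_n}^{\textnormal{conv},\sigma^2}$.

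\textbf{Deterministic term $\tau_n \sqrt{L}R/\sqrt{\varepsilon}$.} I would use heterogeneity so that only the slowest worker carries the informative part of the function. Take the standard Nesterov worst-case convex quadratic $\bar f$ of smoothness $L$ on a ball $Q$, whose chain structure forces any zero-respecting algorithm to need $\Omega(\sqrt{L}R/\sqrt{\varepsilon})$ gradient calls. Set $f_n = n\bar f$ with $\sigma=0$ noise on that part, and $f_i\equiv 0$ for $i<n$. To keep $f$ $L$-smooth I would scale $f_n$ appropriately; only the smoothness of $f$ is constrained in this class.

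In the model $\cA_{\textnormal{zr}}$, progress along the chain happens only through worker $n$'s gradients. Worker $n$ returns at most $\lfloor t/\tau_n\rfloor$ gradients by time $t$, so the number of discovered coordinates is at most $t/\tau_n$. Hence for $t\leq c\,\tau_n\sqrt{L}R/\sqrt{\varepsilon}$ the suboptimality stays above $\varepsilon$, using the classical bound and the condition $\sqrt{L}R>c_1\sqrt{\varepsilon}$.

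\textbf{Statistical term $(\frac1n\sum\tau_i)\sigma^2R^2/(n\varepsilon^2)$.} I would follow the Gaussian-estimation template of Theorem~\ref{thm:divergence_first}, adapted to the convex, non-strongly-convex function-value criterion. Let $f_i(x)=n\,g(x_i;\varphi_i)$ act on coordinate $i$ only, so that $f(x)=\sum_i g(x_i;\varphi_i)$. The stochastic gradient is $\nabla f_i(x)+\xi e_i$ with $\xi\sim\mathcal N(0,\sigma^2)$. Here $g$ is a one-dimensional convex function, for instance a linear function $\pm \lambda x_i$ restricted to an interval of length $a_i$. Then each query to worker $i$ yields a Gaussian observation about the sign or value of $\varphi_i$ with noise level $\sigma/n$. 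The radii $a_i$ are allocated with $\sum_i a_i^2=R^2$ and $a_i^2\propto\tau_i$, as in the proof of Theorem~\ref{thm:divergence_2}.

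On each coordinate I would apply Le Cam's two-point argument as in the proof of Theorem~\ref{thm:divergence_3}. With $N_i(t)\leq t/\tau_i$ samples, the suboptimality on coordinate $i$ is at least of order $a_i\lambda$ whenever $N_i\lambda^2n^2/\sigma^2\lesssim 1$. Summing over coordinates, I would choose $\lambda$ so that the total error is $\sum_i a_i\lambda>2\varepsilon$ while the KL budget is respected for every $i$ with $t\leq c(\frac1n\sum\tau_i)\sigma^2R^2/(n\varepsilon^2)$. Smoothness can be ensured by replacing the linear piece with a Huber-type smoothing of curvature at most $L$. The $Q$-restriction means the bounded set handles the linear functions.

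\textbf{Main obstacle.} The hard step is making the coordinate allocation balance exactly. The per-coordinate error scales like $a_i\sigma/(n\sqrt{N_i})$, and I need its sum to reproduce the arithmetic mean $\frac1n\sum\tau_i$ rather than some other mean. I would try a Cauchy--Schwarz optimization over $a_i$ under $\sum a_i^2=R^2$. The natural choice $a_i\propto\sqrt{\tau_i}$ gives error $\asymp \sigma R\sqrt{\sum\tau_i}/(n\sqrt t)$, and setting this equal to $\varepsilon$ gives $t\asymp \sigma^2R^2\sum_i\tau_i/(n^2\varepsilon^2)$, which is exactly the target. The remaining technical care is ensuring that the smoothing does not destroy the $\Omega(a_i\lambda)$ gap, which should follow from the condition $\sqrt{L}R\gtrsim\sqrt{\varepsilon}$.
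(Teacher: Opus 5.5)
Your proposal is essentially the paper's proof. The first term comes from Nesterov's hard quadratic placed (multiplied by $n$, with noiseless gradients) on the slowest worker, with $f_i\equiv 0$ otherwise. The second term comes from one-dimensional linear functions on a box $Q$ with radii $R_i = R\sqrt{\tau_i/\sum_j\tau_j}$. The only differences are that you rederive the per-coordinate $\sigma R_i/\sqrt{N_i}$ bound by a two-point argument (under a uniform prior on the signs, i.e., Assouad) instead of citing Lemma~10 of \citet{woodworth2018graph}, and that no Huber smoothing is needed, since linear functions on $Q$ are $0$-smooth.

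Do make explicit that the slopes must be coordinate-dependent, $\lambda_i \asymp \sigma\sqrt{\tau_i}/(n\sqrt{t})$, as your final computation already implicitly assumes. A single $\lambda$ that respects every worker's KL budget is pinned down by the fastest worker, and would yield at most $\tau_1$ in place of $\frac{1}{n}\sum_{i=1}^n \tau_i$.
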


\begin{proof}
\emph{First term.} It is easy to prove the dependence on the first term 
$\tau_n \frac{\sqrt{L} R}{\sqrt{\varepsilon}}$ 
using the same idea as in \citep{lu2021optimal,tyurin2023optimal,huang2022lower}. It is sufficient to put a ``hard'' convex function \citep{nesterov2018lectures,woodworth2018graph} to the slowest worker corresponding with the time $\tau_n = \max_{i \in [n]} \tau_i.$ In particular, we can consider the ``hard'' quadratic function $\bar{f}$ from \citep{nesterov2018lectures}[Section 2.1.2] and take the functions
  \begin{align*}
    f_i(x) = \begin{cases}
      n \times \bar{f}(x), & i = n \\
      0, & i < n.
    \end{cases}
  \end{align*}
  for all $x \in \R^d.$ The function $f = \frac{1}{n} \sum_{i=1}^{n} f_i = \bar{f}$ belongs to the class $\cF^{\textnormal{conv}}_{Q, L}.$ We take the stochastic gradients without noise, i.e., $\nabla f_i(x;\xi_i) = \nabla f_i(x)$ deterministically for all $x \in \R^d,$ $\xi_i \in \mathbb{S}_{\xi_i},$ and $i \in [n].$ It is clear that the only worker that can solve the problem is worker $n,$ and it takes $\tau_n$ seconds to find one gradient by the oracle construction. Thus, the required time complexity is $\Theta\left(\tau_n \frac{\sqrt{L} R}{\sqrt{\varepsilon}}\right)$ since the required oracle complexity is $\Theta\left(\frac{\sqrt{L} R}{\sqrt{\varepsilon}}\right)$ \citep{nesterov2018lectures}. 

  \emph{Second term.} The proof of the second term is slightly trickier and uses the construction from \citep{woodworth2018graph}. Let us fix any algorithm. We use the proof of Lemma~10 from \citep{woodworth2018graph} that has the following result. For any $\sigma^2,$ $B > 0$ and any algorithm, it is possible to construct a \emph{one dimensional linear} function $g \,:\, \R \to \R$ on the domain $\{x \in \R \,:\, \abs{x} \leq B\},$ a stochastic gradient mapping $\nabla g \,:\, \R \times \mathbb{S}_{\xi} \to \R,$ and a distribution $\mathcal{D}$ such that 
  \begin{align}
   \label{eq:TDQiLLesGeHinEQfcj}
    \Exp{g(x^N)} - \min_{\abs{x} \leq B} g(x) \geq \frac{\sigma B}{8 \sqrt{N}}
  \end{align}
  after $N$ queries of the oracle, where $\nabla g$ is unbiased and $\sigma^2$-variance-bounded. 
  
  The idea is to put a function $g_i$ to each worker but with different domain sizes. In particular, for all $i \in [n],$ we take the function $f_i \,:\, \R^n \to \R$ such that
  \begin{align}
    \label{eq:JXuOOHssn}
    f_i(x) = g_i(x_i),
  \end{align}
  where $g_i$ is the function from Lemma~10 of \citep{woodworth2018graph} applied independently with $B=R_i$ and $N=N_i(\bar{t}),$ and $x_i$ is the $i$\textsuperscript{th} coordinate of a vector $x.$ For all $i \in [n],$ we consider the function $f_i$ on the domain $\{x_i \in \R \,|\, \abs{x_i} \leq R_i\},$ where $R_i \eqdef R \times \frac{\sqrt{\tau_i}}{\sqrt{\sum_{i=1}^n \tau_i}}.$ One can see that $f$ is convex, $0$--smooth (because $g_i$ is linear).
  The distance between $0$ and the optimal point is less or equal to $R$ because
  \begin{align*}
    \sum_{i=1}^n R_i^2 = \sum_{i=1}^n R^2 \frac{\tau_i}{\sum_{i=1}^n \tau_i} = R^2
  \end{align*}
  and the optimal point for the problem $g_i(x_i) \rightarrow \min_{\abs{x_i} \leq R_i}$ is either $R_i$ or $-R_i.$ We take
  $$Q = \{x \in \R^n \,:\, \abs{x_i} \leq R_i \quad \forall i \in [n]\}.$$

  Let us define the time 
  \begin{align}
    \label{eq:TDQiLLesGeHinEQfcj3}
    \bar{t} \eqdef \frac{\sigma^2 R^2}{256 n \varepsilon^2} \left(\frac{1}{n} \sum_{i=1}^{n} \tau_i\right).
  \end{align}
  By the time $\bar{t},$ worker $i$ can calculate at most
  \begin{align}
    \label{eq:TDQiLLesGeHinEQfcj2}
    N_i(\bar{t}) \eqdef \flr{\frac{\bar{t}}{\tau_i}}
  \end{align}
  stochastic gradients. Therefore,
  \begin{align*}
    \Exp{f(\bar{x})} - \min_{x \in Q} f(x) 
    &= \frac{1}{n} \sum_{i=1}^{n} \left(\Exp{g_i(\bar{x}_i)} - \min_{\abs{x_i} \leq R_i} g_i(x_i)\right) \overset{\eqref{eq:TDQiLLesGeHinEQfcj}}{\geq} \frac{1}{n} \sum_{i=1}^{n} \frac{\sigma R_i}{8 \sqrt{N_i(\bar{t})}} \\
    &= \frac{1}{n} \sum_{i=1}^{n} \frac{\sigma R \sqrt{\tau_i}}{8 \sqrt{N_i(\bar{t})} \sqrt{\sum_{i=1}^n \tau_i}} \overset{\eqref{eq:TDQiLLesGeHinEQfcj3}, \eqref{eq:TDQiLLesGeHinEQfcj2}}{\geq} \sum_{i=1}^{n} \frac{2 \varepsilon \tau_i}{\sum_{i=1}^n \tau_i} = 2 \varepsilon.
  \end{align*}
  where $\bar{x}$ is any possible output of the algorithm before the time $\bar{t}.$

\end{proof}

\section{Proof of Theorems~\ref{thm:rennala} and \ref{thm:malenia}}

\begin{restatable}{theorem}{RENNALATHEOREM}
  \label{thm:rennala}
  Let Assumptions~\ref{ass:global_lipschitz_constant}, \ref{ass:convex}, \ref{ass:stochastic_variance_bounded}, and \ref{ass:pl_global} hold, \textbf{and the functions $\{f_i\}$ are equal}. Let us take $\gamma = \nicefrac{1}{L}$ and $S = \nicefrac{4 \sigma^2}{\mu L \varepsilon},$ then \algname{Rennala SGD} (Algorithm~\ref{alg:alg_server_heterog} with $w_i^k = \nicefrac{n}{\sum_{i=1}^n B_i^k}$) finds $x^{k+1}$ such that $\Exp{\norm{x^{k+1} - x^{k+1}_*}^2} \leq \varepsilon$ after
  \begin{align}
    \label{eq:rennala_old}
    \squeeze \cO\left(\min\limits_{m \in [n]} \left[\left(\frac{1}{m} \sum\limits_{i=1}^{m} \frac{1}{\tau_{i}}\right)^{-1} \left(\frac{L}{\mu} + \frac{\sigma^2}{m \varepsilon \mu^2}\right)\right] \log \frac{R^2}{\varepsilon}\right)
  \end{align}
  seconds, where $x^{k+1}_*$ is the closest solution to $x^{k+1}.$
\end{restatable}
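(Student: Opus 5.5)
The plan is to observe that Theorem~\ref{thm:rennala} is essentially a special case of Theorem~\ref{thm:main_theorem}, but with the global constants $L$ and $\mu$ in place of $L_{\max}$ and the local P\L\ constant. I would therefore redo the one-step recursion from the proof of Theorem~\ref{thm:main_theorem} directly for the common function $f_i \equiv f$.

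Let $x_*^k$ be the projection of $x^k$ onto the solution set. As in \eqref{eq:iHCtnSUNn}, conditioning on iteration $k$ and using the projection property gives
\[
\ExpSub{k}{\norm{x^{k+1}-x_*^{k+1}}^2}\le \norm{x^k-x_*^k}^2-2\gamma\inp{x^k-x_*^k}{g}+\gamma^2\norm{g}^2+\frac{\gamma^2\sigma^2}{\sum_{i}B_i^k},
\]
where $g=\frac1n\sum_i w_i^kB_i^k\nabla f(x^k)=\nabla f(x^k)$. Here the estimator is unbiased because all $f_i$ are equal and $\frac1n\sum_i w_i^kB_i^k=1$. The variance term uses independence of the samples and Assumption~\ref{ass:stochastic_variance_bounded} with $w_i^k=n/\sum_j B_j^k$.

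Next I control the two deterministic terms.
\begin{itemize}
\item Since $\nabla f(x_*^k)=0$ and $f$ is convex and $L$-smooth, Lemma~\ref{lemma:lipt_func} gives $\norm{\nabla f(x^k)}^2\le L\inp{x^k-x_*^k}{\nabla f(x^k)}$. With $\gamma=1/L$ the drift and quadratic terms combine to at most $-\gamma\inp{x^k-x_*^k}{\nabla f(x^k)}$.
\item Lemma~\ref{lemma:star_strongly_convex}, using Assumptions~\ref{ass:convex} and \ref{ass:pl_global}, then bounds this by $-\frac{\gamma\mu}{2}\norm{x^k-x_*^k}^2$.
\end{itemize}
The stopping rule ensures $\sum_i B_i^k> S$. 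Putting these together yields
\[
\ExpSub{k}{\norm{x^{k+1}-x_*^{k+1}}^2}\le \left(1-\frac{\mu}{2L}\right)\norm{x^k-x_*^k}^2+\frac{\sigma^2}{L^2S}.
\]
Unrolling gives the bound $(1-\frac{\mu}{2L})^{k+1}R^2+\frac{2\sigma^2}{\mu LS}$. With $S=4\sigma^2/(\mu L\varepsilon)$ the second term equals $\varepsilon/2$. It then suffices to take $k=\cO(\frac{L}{\mu}\log\frac{R^2}{\varepsilon})$ iterations so that the first term is at most $\varepsilon/2$.

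For the time bound, each iteration waits until $\sum_i B_i^k>S$ gradients have been collected at $x^k$. If only the fastest $m$ workers are used, worker $i$ produces about $t/\tau_i-1$ gradients in time $t$. Collecting $S$ gradients therefore takes at most
\[
2\min_{m\in[n]}\left(\frac1m\sum_{i\le m}\frac1{\tau_i}\right)^{-1}\left(1+\frac{S}{m}\right).
\]
This is the same bound as Theorem~11 of \citet{tyurin2024freya}, which was already invoked in Theorem~\ref{thm:main_theorem}. Multiplying this per-iteration time by the number of iterations and substituting $S$ gives \eqref{eq:rennala_old}.

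The main subtlety I expect is the co-coercivity step. The proof of Theorem~\ref{thm:main_theorem} uses strong interpolation to apply Lemma~\ref{lemma:lipt_func} per worker. Here it must instead be applied to $f$ itself, which requires the workers' functions to be identical so that the aggregated gradient is exactly $\nabla f(x^k)$. The per-iteration time bound is routine given the cited result.
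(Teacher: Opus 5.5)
Your proposal is correct and follows essentially the paper's route: the paper observes that with identical $f_i$ the \algname{Rennala SGD} estimator is unbiased with variance at most $\sigma^2/S$ and then invokes its generic SGD result (Theorem~\ref{thm:aux}), whose proof is exactly the projection, co-coercivity and Lemma~\ref{lemma:star_strongly_convex} recursion you inline, followed by the same per-iteration time bound from \citet{tyurin2024freya}. Your per-iteration time $2\min_{m}(\frac1m\sum_{i\le m}\frac1{\tau_i})^{-1}(1+\frac{S}{m})$ is the right one to use. The paper's proof of this theorem writes $(1+S)$ there, which is a valid but looser sufficient time and would not by itself produce the $\frac{\sigma^2}{m\varepsilon\mu^2}$ term in \eqref{eq:rennala_old}.
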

\begin{proof}
  Since the functions are equal, \algname{Rennala SGD} is equivalent to 
  \begin{equation*}
    \begin{aligned}
      & \squeeze x^{k+1} = x^{k} - \gamma g^k_{\textnormal{\algname{R}}}, \\ 
      & \squeeze g^k_{\textnormal{\algname{R}}} \eqdef \frac{1}{\sum_{i=1}^n B_i^k} \sum\limits_{i=1}^{n} \sum\limits_{j=1}^{B_i^k} \nabla f(x^k;\xi^k_{ij}).
    \end{aligned}
    \end{equation*}
    Clearly, $g^k_{\textnormal{\algname{R}}}$ is unbiased and 
    \begin{align*}
      \ExpSub{k}{\norm{g^k_{\textnormal{\algname{R}}} - \nabla f(x^k)}^2} = \left(\sum_{i=1}^n B_i^k\right)^{-2} \sum\limits_{i=1}^{n} \sum\limits_{j=1}^{B_i^k} \ExpSub{k}{\norm{\nabla f(x^k;\xi^k_{ij}) - \nabla f(x^k)}}^2 \leq \sigma^2 \left(\sum_{i=1}^n B_i^k\right)^{-1}.
    \end{align*}
    \algname{Rennala SGD} waits for the moment when $\sum_{i=1}^n B_i^k > S$ (see Alg.~\ref{alg:alg_server_heterog} with $w_i^k = \nicefrac{n}{\sum_{i=1}^n B_i^k}$). Thus
    \begin{align*}
      \ExpSub{k}{\norm{g^k_{\textnormal{\algname{R}}} - \nabla f(x^k)}^2} \leq \frac{\sigma^2}{S} \leq \frac{\mu L \varepsilon}{4}
    \end{align*}
    We can use Theorem~\ref{thm:aux} to get
    \begin{align*}
      \Exp{\norm{x^{k+1} - x^{k+1}_*}^2} \leq \left(1 - \frac{\gamma \mu}{2}\right)^{k+1} \norm{x^{0} - x^{k}_*}^2 + \frac{\gamma L \varepsilon}{2}.
    \end{align*}
    Since $\gamma = \frac{1}{L},$ we obtain
    \begin{align*}
      \Exp{\norm{x^{k+1} - x^{k+1}_*}^2} \leq \left(1 - \frac{\mu}{2 L}\right)^{k+1} \norm{x^{0} - x^{k}_*}^2 + \frac{\varepsilon}{2}.
    \end{align*}
    The last inequality ensure that the method finds an $\varepsilon$--solution after
    \begin{align*}
      \cO\left(\frac{L}{\mu} \log \frac{R^2}{\varepsilon}\right)
    \end{align*}
    iterations. In each iteration, the method has to ensure that $\sum_{i=1}^n B_i^k > S.$ A sufficient time for that is 
    \begin{align*}
      2 \min\limits_{m \in [n]} \left[\left(\frac{1}{m} \sum\limits_{i=1}^{m} \frac{1}{\tau_{i}}\right)^{-1} \left(1 + S\right)\right].
    \end{align*}
    under the fixed computation model (see Theorem~11 in \citep{tyurin2024freya}). It is left to multiply this time by $\cO\left(\frac{L}{\mu} \log \frac{R^2}{\varepsilon}\right).$
\end{proof}

\begin{restatable}{theorem}{MALENIATHEOREM}
  \label{thm:malenia}
  Let Assumptions~\ref{ass:global_lipschitz_constant}, \ref{ass:convex}, \ref{ass:stochastic_variance_bounded}, and \ref{ass:pl_global}  hold. Let us take $\gamma = \nicefrac{1}{L}$ and $S = \nicefrac{4 \sigma^2}{\mu L \varepsilon},$ then \algname{Malenia SGD} (Algorithm~\ref{alg:alg_server_heterog} with $w_i^k = \nicefrac{1}{B_i^k}$) finds $x^{k+1}$ such that $\Exp{\norm{x^{k+1} - x^{k+1}_*}^2} \leq \varepsilon$ after
  \begin{align}
    \label{eq:malenia_time}
    \squeeze \cO\left(\left[\tau_n \frac{L}{\mu} + \left(\frac{1}{n} \sum\limits_{i=1}^{n} \tau_{i}\right) \frac{\sigma^2}{n \varepsilon \mu^2} \right]\log \frac{R^2}{\varepsilon}\right)
  \end{align}
  seconds, where $x^{k+1}_*$ is the closest solution to $x^{k+1}.$
\end{restatable}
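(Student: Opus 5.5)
Malenia SGD is Algorithm~\ref{alg:alg_server_heterog} with $w_i^k = \nicefrac{1}{B_i^k}$. I would reduce it to a step of SGD with an unbiased, variance-controlled gradient estimator, exactly as in the proof of Theorem~\ref{thm:rennala}, and then count the wall-clock time per iteration.

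\textbf{Estimator.} With these weights, $g^k_{\textnormal{\algname{M}}} = \frac{1}{n}\sum_{i=1}^n \frac{1}{B_i^k}\sum_{j=1}^{B_i^k}\nabla f_i(x^k;\xi^k_{ij})$. Conditioning on $x^k$ and using Assumption~\ref{ass:stochastic_variance_bounded} gives $\ExpSub{k}{g^k_{\textnormal{\algname{M}}}} = \frac{1}{n}\sum_i \nabla f_i(x^k) = \nabla f(x^k)$.

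One subtlety is that the $B_i^k$ are random stopping counts. Under the fixed computation model, however, they are determined by the arrival times, which do not depend on the sampled noise. So the $B_i^k$ can be treated as fixed given $x^k$, and the martingale argument goes through.

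By independence of the samples,
\begin{align*}
\ExpSub{k}{\norm{g^k_{\textnormal{\algname{M}}} - \nabla f(x^k)}^2} \leq \frac{\sigma^2}{n^2}\sum_{i=1}^n \frac{1}{B_i^k} = \frac{\sigma^2}{n}\left(\frac{1}{n}\sum_{i=1}^n \frac{1}{B_i^k}\right).
\end{align*}
The loop exits only when $\left(\frac{1}{n}\sum_i \nicefrac{1}{B_i^k}\right)^{-1} > \nicefrac{S}{n}$. Hence the variance is at most $\nicefrac{\sigma^2}{S} \leq \nicefrac{\mu L\varepsilon}{4}$, which is the same bound used in the proof of Theorem~\ref{thm:rennala}.

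\textbf{Iteration complexity.} I would now invoke the auxiliary SGD result Theorem~\ref{thm:aux}, already used for Theorem~\ref{thm:rennala}. It applies to unbiased estimators with variance at most $\nicefrac{\mu L\varepsilon}{4}$ under Assumptions~\ref{ass:global_lipschitz_constant}, \ref{ass:convex}, and~\ref{ass:pl_global}, with $\gamma = \nicefrac{1}{L}$. It yields $\Exp{\norm{x^{k+1}-x^{k+1}_*}^2} \leq (1-\nicefrac{\mu}{2L})^{k+1}R^2 + \nicefrac{\varepsilon}{2}$. Therefore $K = \cO\left(\frac{L}{\mu}\log\frac{R^2}{\varepsilon}\right)$ iterations suffice.

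\textbf{Time per iteration.} This is the main step. I would show that every iteration terminates within
\begin{align*}
t^* \eqdef 2\left(\tau_n + \left(\frac{1}{n}\sum_{i=1}^n\tau_i\right)\frac{S}{n}\right)
\end{align*}
seconds. By time $t^*$, each worker has produced $B_i \geq \flr{\nicefrac{t^*}{\tau_i}} \geq \nicefrac{t^*}{2\tau_i}$ gradients. This holds because $t^* \geq 2\tau_n \geq 2\tau_i$, so $\flr{\nicefrac{t^*}{\tau_i}} \geq 1$, and $\flr{y} \geq \nicefrac{y}{2}$ for $y \geq 1$. Consequently,
\begin{align*}
\frac{1}{n}\sum_{i=1}^n \frac{1}{B_i} \leq \frac{2}{t^*}\cdot\frac{1}{n}\sum_{i=1}^n\tau_i \leq \frac{2\,\frac{1}{n}\sum_i\tau_i}{2\left(\frac{1}{n}\sum_i\tau_i\right)\frac{S}{n}} = \frac{n}{S}.
\end{align*}
The stopping condition requires $\left(\frac{1}{n}\sum_i \nicefrac{1}{B_i}\right)^{-1} > \nicefrac{S}{n}$, i.e.\ a strict inequality. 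I would obtain it by adding an extra $\tau_n$ to $t^*$ (or slightly enlarging the constant), which changes nothing in the $\cO(\cdot)$.

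\textbf{Conclusion.} Since $S = \nicefrac{4\sigma^2}{\mu L\varepsilon}$, the per-iteration time is $\cO\left(\tau_n + \left(\frac{1}{n}\sum_i\tau_i\right)\frac{\sigma^2}{n\mu L\varepsilon}\right)$. Multiplying by $K$ gives exactly \eqref{eq:malenia_time}: the factor $\frac{L}{\mu}$ cancels the $L$ in the variance term, leaving $\frac{\sigma^2}{n\varepsilon\mu^2}$.
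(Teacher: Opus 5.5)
Your proposal is correct and follows the paper's proof essentially step for step: the same variance bound $\frac{\sigma^2}{n}\bigl(\frac1n\sum_i 1/B_i^k\bigr)\le \sigma^2/S$, the same appeal to Theorem~\ref{thm:aux} for $\cO(\frac{L}{\mu}\log\frac{R^2}{\varepsilon})$ iterations, and the same per-iteration time $2\bigl(\tau_n + (\frac1n\sum_i\tau_i)\frac{S}{n}\bigr)$ via $\flr{y}\ge y/2$. One small correction: you do not need to enlarge $t^*$ to get the strict stopping inequality, because the step where you drop $2\tau_n>0$ from the denominator is already strict, and this is how the paper obtains $<\nicefrac{n}{S}$ directly.
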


\begin{proof}
  The proof of this theorem almost repeats the proof of Theorem~\ref{thm:rennala}. The variance of \algname{Malenia SGD} is 
  \begin{align*}
    \ExpSub{k}{\norm{g^k_{\textnormal{\algname{M}}} - \nabla f(x^k)}^2} = \ExpSub{k}{\norm{\frac{1}{n} \sum\limits_{i=1}^{n} \frac{1}{B_i^k}  \sum\limits_{j=1}^{B_i^k} \nabla f_i(x^k;\xi^k_{ij}) - \nabla f(x^k)}^2} \leq \frac{\sigma^2}{n} \left(\frac{1}{n} \sum\limits_{j=1}^{n} \frac{1}{B_i^k}\right).
  \end{align*}
  The method waits for the moment when $\left(\frac{1}{n} \sum_{i=1}^n \nicefrac{1}{B_i^k}\right)^{-1} > \frac{S}{n}.$ Therefore
  \begin{align*}
    \ExpSub{k}{\norm{g^k_{\textnormal{\algname{M}}} - \nabla f(x^k)}^2} \leq \frac{\sigma^2}{S}.
  \end{align*}
  Using the same reasoning, the method finds an $\varepsilon$--solution after
  \begin{align*}
    \cO\left(\frac{L}{\mu} \log \frac{R^2}{\varepsilon}\right)
  \end{align*}
  iterations. In each iteration, the method has to ensure that $\left(\frac{1}{n} \sum_{i=1}^n \nicefrac{1}{B_i^k}\right)^{-1} > \frac{S}{n}.$ A sufficient time for that is
  \begin{align*}
    \bar{t} = 2 \left(\tau_n + \left(\frac{1}{n} \sum_{i=1}^n \tau_i \right) \frac{S}{n} \right)
  \end{align*}
  under the fixed computation model because the number of computed stochastic gradients $B_i^k \geq \flr{\frac{\bar{t}}{\tau_i}},$ and
  \begin{align*}
    \frac{1}{n} \sum_{i=1}^n \frac{1}{B_i^k} \leq \frac{1}{n} \sum_{i=1}^n \frac{1}{\flr{\frac{\bar{t}}{\tau_i}}} \leq \frac{1}{n} \sum_{i=1}^n \frac{2 \tau_i}{\bar{t}} < \frac{n}{S},
  \end{align*}
  where we use $\flr{x} \geq \frac{x}{2}$ for all $x \geq 1.$ Multiplying $\bar{t}$ by $\cO\left(\frac{L}{\mu} \log \frac{R^2}{\varepsilon}\right),$ we get the result.
\end{proof}

\begin{theorem}
  \label{thm:aux}
  Consider the method 
  \begin{equation}
    \begin{aligned}
      \squeeze x^{k+1} = x^{k} - \gamma \nabla f(x^k;\xi^k),
    \end{aligned}
    \label{eq:uYyDKbIYszbWwSqNpJB}
    \end{equation}
  where $\ExpSub{k}{\nabla f(x^k;\xi^k)} = \nabla f(x^k),$ $\ExpSub{k}{\norm{\nabla f(x^k;\xi^k) - \nabla f(x^k)}^2} \leq \sigma^2,$ and $\sigma^2 > 0.$
  Let Assumptions~\ref{ass:convex}, \ref{ass:pl_global}, and \ref{ass:global_lipschitz_constant} hold. Let us take $\gamma = \nicefrac{1}{L}$ and $S = \nicefrac{2 \sigma^2}{\mu L \varepsilon},$ then the method finds $x^{k+1}$ such that 
  \begin{align*}
    \Exp{\norm{x^{k+1} - x^{k+1}_*}^2} \leq \left(1 - \frac{\gamma \mu}{2}\right)^{k+1} \norm{x^{0} - x^{k}_*}^2 + \frac{2 \gamma \sigma^2}{\mu},
  \end{align*} where $x^{k+1}_*$ is the closest solution of $\min\limits_{x \in \R^d} f(x)$ to $x^{k+1}.$
\end{theorem}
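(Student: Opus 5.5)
The plan is to reproduce the standard one-step analysis of \algname{SGD} in the squared distance to the solution set, which is the same computation as in the proof of Theorem~\ref{thm:main_theorem} specialized to a single unbiased oracle. Let $x^k_*$ denote the Euclidean projection of $x^k$ onto the (closed, convex) solution set $\mathcal{X}_*$. Since $x^{k+1}_*$ is the closest solution to $x^{k+1}$, we have $\norm{x^{k+1}-x^{k+1}_*}^2 \leq \norm{x^{k+1}-x^k_*}^2$. Expanding the update \eqref{eq:uYyDKbIYszbWwSqNpJB} gives
\begin{align*}
\ExpSub{k}{\norm{x^{k+1}-x^{k+1}_*}^2} \leq \norm{x^k-x^k_*}^2 - 2\gamma\inp{x^k-x^k_*}{\nabla f(x^k)} + \gamma^2\norm{\nabla f(x^k)}^2 + \gamma^2\sigma^2 .
\end{align*}
Here I use unbiasedness and the variance decomposition $\ExpSub{k}{\norm{\nabla f(x^k;\xi^k)}^2} \leq \norm{\nabla f(x^k)}^2 + \sigma^2$.

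Next I control the gradient-norm term. Because $x^k_*$ is a minimizer, $\nabla f(x^k_*)=0$. Lemma~\ref{lemma:lipt_func}, which uses convexity and $L$--smoothness, then gives $\norm{\nabla f(x^k)}^2 \leq L\inp{\nabla f(x^k)}{x^k-x^k_*}$. With $\gamma = 1/L$, the middle two terms combine to at most $-(2\gamma - L\gamma^2)\inp{x^k-x^k_*}{\nabla f(x^k)} = -\gamma\inp{x^k-x^k_*}{\nabla f(x^k)}$. This inner product is nonnegative by convexity, so the sign bookkeeping is consistent.

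Then Lemma~\ref{lemma:star_strongly_convex}, which uses convexity and Assumption~\ref{ass:pl_global}, bounds the inner product below by $\frac{\mu}{2}\norm{x^k-x^k_*}^2$. This yields the contraction
\begin{align*}
\ExpSub{k}{\norm{x^{k+1}-x^{k+1}_*}^2} \leq \left(1-\frac{\gamma\mu}{2}\right)\norm{x^k-x^k_*}^2 + \gamma^2\sigma^2 .
\end{align*}
Note that $\gamma\mu/2 \leq \mu/(2L) \leq 1/2$, since $\mu \leq L$ under P\L{} and smoothness, so the factor lies in $(0,1)$.

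Finally, I take full expectations and unroll the recursion over $k+1$ steps. Summing the geometric series gives $\gamma^2\sigma^2 \sum_{j\geq 0}(1-\gamma\mu/2)^j = 2\gamma\sigma^2/\mu$, which is the stated bound. The initial term is $\norm{x^0 - x^0_*}^2$. In the statement this is written as $\norm{x^0-x^k_*}^2$, but it is bounded by $\norm{x^0-x^0_*}^2$ only up to the choice of reference point, so I would state it with $x^0_*$, the projection of $x^0$. The parameter $S$ plays no role in this auxiliary statement.

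The only delicate step is the projection argument: it justifies comparing to $x^k_*$ rather than a fixed minimizer, and it requires the solution set to be closed and convex, which holds under convexity. Everything else is routine.
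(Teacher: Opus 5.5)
Your proposal is correct and follows the paper's proof step for step: the projection inequality, then unbiasedness with the variance decomposition, then co-coercivity (Lemma~\ref{lemma:lipt_func}) with $\gamma = 1/L$, then Lemma~\ref{lemma:star_strongly_convex}, then unrolling. The paper's proof also ends with $\norm{x^0 - x^0_*}^2$, and no restatement is needed: $\norm{x^0 - x^0_*} \leq \norm{x^0 - x^k_*}$ because $x^0_*$ is the closest solution to $x^0$, so your bound implies the one as written.
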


\begin{proof}
  Using the properties of the projection and \eqref{eq:uYyDKbIYszbWwSqNpJB}, we have
  \begin{align*}
    \ExpSub{k}{\norm{x^{k+1} - x^{k+1}_*}^2} 
    &\leq \ExpSub{k}{\norm{x^{k+1} - x^{k}_*}^2} \\
    &= \ExpSub{k}{\norm{x^{k} - \gamma \nabla f(x^k;\xi^k) - x^{k}_*}^2} \\
    &= \ExpSub{k}{\norm{x^{k} - x^{k}_*}^2} - 2 \gamma \ExpSub{k}{\inp{\nabla f(x^k;\xi^k)}{x^{k} - x^{k}_*}} + \gamma^2 \ExpSub{k}{\norm{\nabla f(x^k;\xi^k)}^2} \\
    &= \ExpSub{k}{\norm{x^{k} - x^{k}_*}^2} - 2 \gamma \inp{\nabla f(x^k)}{x^{k} - x^{k}_*} + \gamma^2 \ExpSub{k}{\norm{\nabla f(x^k;\xi^k)}^2}.
  \end{align*}
  In the last equality, we use the unbiasedness. Due the variance decomposition equality, we get
  \begin{equation}
  \begin{aligned}
    \ExpSub{k}{\norm{x^{k+1} - x^{k+1}_*}^2} 
    &\leq \ExpSub{k}{\norm{x^{k} - x^{k}_*}^2} - 2 \gamma \inp{\nabla f(x^k)}{x^{k} - x^{k}_*} + \gamma^2 \norm{\nabla f(x^k)}^2 + \gamma^2 \ExpSub{k}{\norm{\nabla f(x^k;\xi^k) - \nabla f(x^k)}^2}.
  \end{aligned}
  \label{eq:glhRcJwVVZEhiznex}
  \end{equation}
  Since the function $f$ is $L$--smooth and $\nabla f(x^{k}_*) = 0,$ we obtain
  \begin{align*}
    &- 2 \gamma \inp{\nabla f(x^k)}{x^{k} - x^{k}_*} + \gamma^2 \norm{\nabla f(x^k)}^2 \\
    &=- 2 \gamma \inp{\nabla f(x^k) - \nabla f(x_*)}{x^{k} - x^{k}_*} + \gamma^2 \norm{\nabla f(x^k) - \nabla f(x_*)}^2 \\
    &\leq- 2 \gamma \inp{\nabla f(x^k) - \nabla f(x_*)}{x^{k} - x^{k}_*} + L \gamma^2 \inp{\nabla f(x^k) - \nabla f(x_*)}{x^{k} - x^{k}_*} \\
    &= \gamma \left(L \gamma - 2 \right)  \inp{\nabla f(x^k) - \nabla f(x_*)}{x^{k} - x^{k}_*}.
  \end{align*}
  Taking $\gamma \leq \frac{1}{L}$ and substituting the inequality to \eqref{eq:glhRcJwVVZEhiznex}, we get
  \begin{align*}
    \ExpSub{k}{\norm{x^{k+1} - x^{k+1}_*}^2} 
    &\leq \ExpSub{k}{\norm{x^{k} - x^{k}_*}^2} - \gamma \inp{\nabla f(x^k)}{x^{k} - x^{k}_*} + \gamma^2 \ExpSub{k}{\norm{\nabla f(x^k;\xi^k) - \nabla f(x^k)}^2}.
  \end{align*}
  The $\sigma^2$--variance bounded ensures that
  \begin{align*}
    \ExpSub{k}{\norm{x^{k+1} - x^{k+1}_*}^2} 
    &\leq \ExpSub{k}{\norm{x^{k} - x^{k}_*}^2} - \gamma \inp{\nabla f(x^k)}{x^{k} - x^{k}_*} + \gamma^2 \sigma^2.
  \end{align*}
  Due to convexity and Assumption~\ref{ass:pl_global}, we can use Lemma~\ref{lemma:star_strongly_convex}, which yields
  \begin{align*}
    \ExpSub{k}{\norm{x^{k+1} - x^{k+1}_*}^2} 
    &\leq \ExpSub{k}{\norm{x^{k} - x^{k}_*}^2} - \frac{\gamma \mu}{2} \norm{x^{k} - x^{k}_*} + \gamma^2 \sigma^2 \\
    &= \left(1 - \frac{\gamma \mu}{2}\right)\ExpSub{k}{\norm{x^{k} - x^{k}_*}^2} + \gamma^2 \sigma^2.
  \end{align*}
  Unrolling the recursion and taking the full expectation, we obtain
  \begin{align*}
    \Exp{\norm{x^{k+1} - x^{k+1}_*}^2} \leq \left(1 - \frac{\gamma \mu}{2}\right)^{k+1} \norm{x^{0} - x^{0}_*}^2 + \frac{2 \gamma \sigma^2}{\mu}
  \end{align*}
\end{proof}

\section{Assumptions~\ref{ass:convex}, \ref{ass:inter_strong} and \ref{ass:pl_condition} imply Assumption~\ref{ass:pl_global}}
\label{sec:aux_imply}
\begin{theorem}
  Let $\{f_i\}$ satisfy Assumption~\ref{ass:convex}, \ref{ass:inter_strong}, and Assumption~\ref{ass:pl_condition} with constant $\mu$, then $f$ satisfies Assumption~\ref{ass:pl_global} with constant $\frac{\mu}{4}$.
\end{theorem}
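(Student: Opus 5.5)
The plan is to pass through the distance to the common solution set, using Lemma~\ref{lemma:star_strongly_convex} for each $f_i$ and then convexity of $f$. First I would fix the structural consequences of Assumption~\ref{ass:inter_strong}. Let $X^*$ be the solution set of $f$, which is nonempty by Assumption~\ref{ass:convex}. Since every $f_i$ is convex, a point is a minimizer of $f_i$ iff $\nabla f_i=0$ there. Assumption~\ref{ass:inter_strong} then says $\operatorname*{arg\,min} f_i = X^*$ for every $i\in[n]$. In particular, for any $x_*\in X^*$ we have $f_i(x_*)=f_i^*$ for all $i$, hence $f^* = \frac1n\sum_{i=1}^n f_i^*$. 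For a fixed $x\in\R^d$, let $\bar{x}_*$ be its projection onto $X^*$. This is simultaneously the projection of $x$ onto the solution set of each $f_i$, so the same point $\bar{x}_*$ appears in Lemma~\ref{lemma:star_strongly_convex} for every $i$.

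Second, I would apply Lemma~\ref{lemma:star_strongly_convex} to each $f_i$. Each $f_i$ is convex and satisfies the P\L\ condition with constant $\mu$ by Assumption~\ref{ass:pl_condition}, so
\begin{align*}
\inp{\nabla f_i(x)}{x-\bar{x}_*} \geq \frac{\mu}{2}\norm{x-\bar{x}_*}^2 .
\end{align*}
Averaging over $i$ gives $\inp{\nabla f(x)}{x-\bar{x}_*}\geq \frac{\mu}{2}\norm{x-\bar{x}_*}^2$. By Cauchy--Schwarz this yields the error bound $\norm{\nabla f(x)}\geq \frac{\mu}{2}\norm{x-\bar{x}_*}$.

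Third, convexity of $f$ gives
\begin{align*}
f(x)-f^* = f(x)-f(\bar{x}_*) \leq \inp{\nabla f(x)}{x-\bar{x}_*} \leq \norm{\nabla f(x)}\,\norm{x-\bar{x}_*}.
\end{align*}
Combining this with the error bound,
\begin{align*}
\norm{\nabla f(x)}^2 \geq \frac{\mu}{2}\norm{x-\bar{x}_*}\,\norm{\nabla f(x)} \geq \frac{\mu}{2}\bigl(f(x)-f^*\bigr) = 2\cdot\frac{\mu}{4}\bigl(f(x)-f^*\bigr).
\end{align*}
This is exactly Assumption~\ref{ass:pl_global} with constant $\mu/4$. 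The case $x\in X^*$ is trivial.

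The only delicate step is the first one: making sure the per-function projections in Lemma~\ref{lemma:star_strongly_convex} all coincide with the projection onto $X^*$. This is precisely where the ``if and only if'' in Assumption~\ref{ass:inter_strong} is needed; weak interpolation would only give $X^*\subseteq \operatorname*{arg\,min} f_i$. The remaining steps are routine applications of Cauchy--Schwarz and convexity.
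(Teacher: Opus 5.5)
Your proof is correct and follows essentially the same route as the paper: strong interpolation makes every $f_i$ share the projection $\bar{x}_*$, you average a per-function consequence of the local P\L\ condition, and you close with convexity of $f$ plus Cauchy--Schwarz. The only difference is the averaged inequality. The paper averages the quadratic growth bound $f_i(x)-f_i^*\geq\frac{\mu}{2}\norm{x-\bar{x}_*}^2$ and combines it with $f(x)-f^*\leq\norm{\nabla f(x)}\norm{x-\bar{x}_*}$, whereas you average the inequality of Lemma~\ref{lemma:star_strongly_convex} to get the error bound $\norm{\nabla f(x)}\geq\frac{\mu}{2}\norm{x-\bar{x}_*}$; both yield the same constant $\mu/4$.
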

\begin{proof}
  We fix $x \in \R^d.$ 
  Since Assumption~\ref{ass:inter_strong} hold, 
  then the functions share the closest solution $x_*$ to $x.$  Assumption~\ref{ass:pl_condition} ensures that
  \begin{align*}
    f_i(x) - f_i(x_*) \geq \frac{\mu}{2} \norm{x - x_*}^2.
  \end{align*}
  for all $i \in [n]$ \citep{karimi2016linear}. Thus
  \begin{align*}
    f(x) - f(x_*) \geq \frac{\mu}{2} \norm{x - x_*}^2.
  \end{align*}
  Due to convexity, we get
  \begin{align*}
    f(x_*) \geq f(x) + \inp{\nabla f(x)}{x_* - x}.
  \end{align*}
  Therefore
  \begin{align*}
    f(x) - f(x_*) \leq \inp{\nabla f(x)}{x - x_*} \leq \norm{\nabla f(x)} \norm{x - x_*} \leq \norm{\nabla f(x)} \sqrt{\frac{2}{\mu}}\sqrt{f(x) - f(x_*)}
  \end{align*}
  and 
  \begin{align*}
    \frac{\mu}{4} \left(f(x) - f(x_*)\right) \leq \frac{1}{2} \norm{\nabla f(x)}^2,
  \end{align*}
  which is Assumption~\ref{ass:pl_global} with constant $\frac{\mu}{4}.$
\end{proof}

\newpage

\section{Experiments}
\label{sec:exp}
We conduct a comparison between \algname{Rennala SGD} and \algname{Malenia SGD} on both stochastic quadratic optimization tasks and real-world machine learning problems. These are standard quadratic optimization and computer vision problems, the design of which we explain in Section~\ref{sec:exp_details}. We developed a library that simulates the behavior of $n = 100$ workers. Both methods have two hyperparameters: step size $\gamma$ and parameter $S.$ We do a grid search for both methods and find the best pairs in all setups.
We start with synthetic quadratic optimization problems, which are generated \emph{without and with the interpolation regime}. The procedure is described in Section~\ref{sec:exp_more}.

\subsection{Without interpolation}
\begin{figure}[H]
  \centering
  \includegraphics[width=0.5\columnwidth]{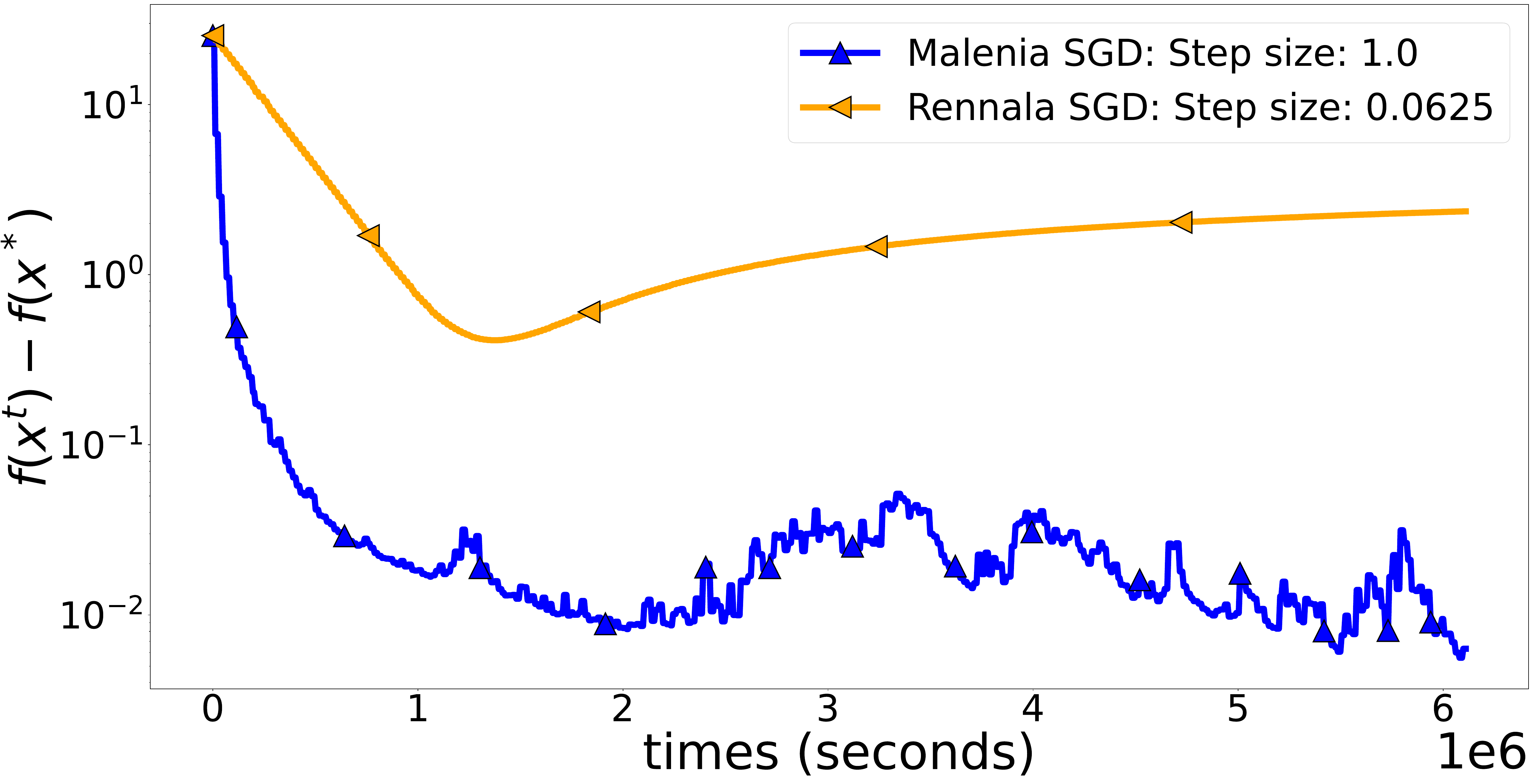}
  \caption{Comparison of the methods on a quadratic optimization problem \emph{without interpolation}. We take the computation time $\tau_i = i^2$ for all $i \in [n].$}
  \label{fig:no_inter}
\end{figure}
\label{sec:exp:without}
In Figure~\ref{fig:no_inter}, we present results without interpolation. The plots concur with the theory from Section~\ref{sec:challenges}, where we explain that it is essential to have interpolation to break the time complexity of \algname{Malenia SGD}. \algname{Rennala SGD} has biased gradient estimators and does not converge to a minimum of the quadratic optimization problem in Figure~\ref{fig:no_inter}.

\subsection{With interpolation}
\label{sec:exp:with}
\begin{figure}[H]
  \centering
  \begin{subfigure}[t]{0.48\columnwidth}
    \centering
    \includegraphics[width=\columnwidth]{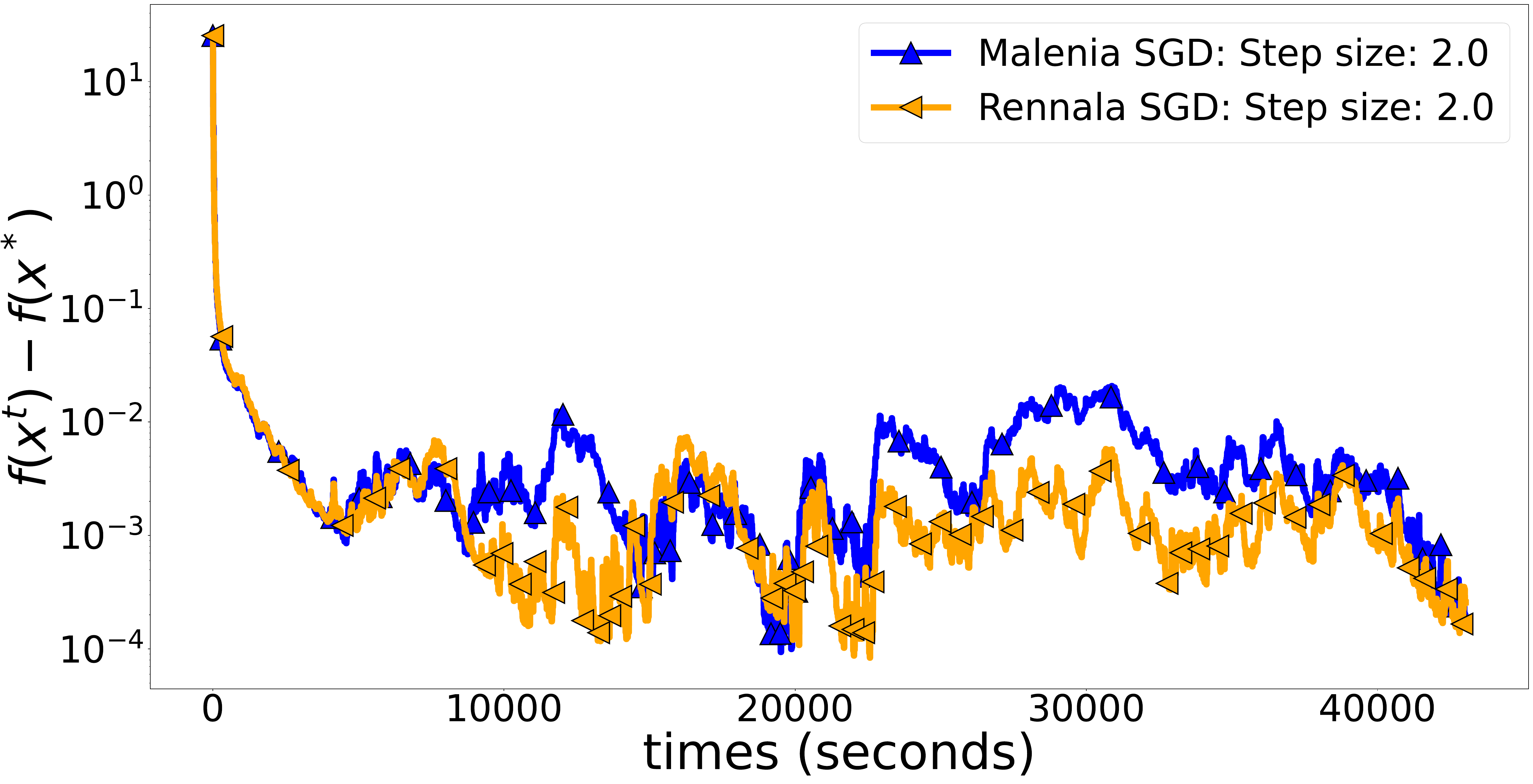}
  \end{subfigure}%
  \begin{subfigure}[t]{0.48\columnwidth}
    \centering
    \includegraphics[width=\columnwidth]{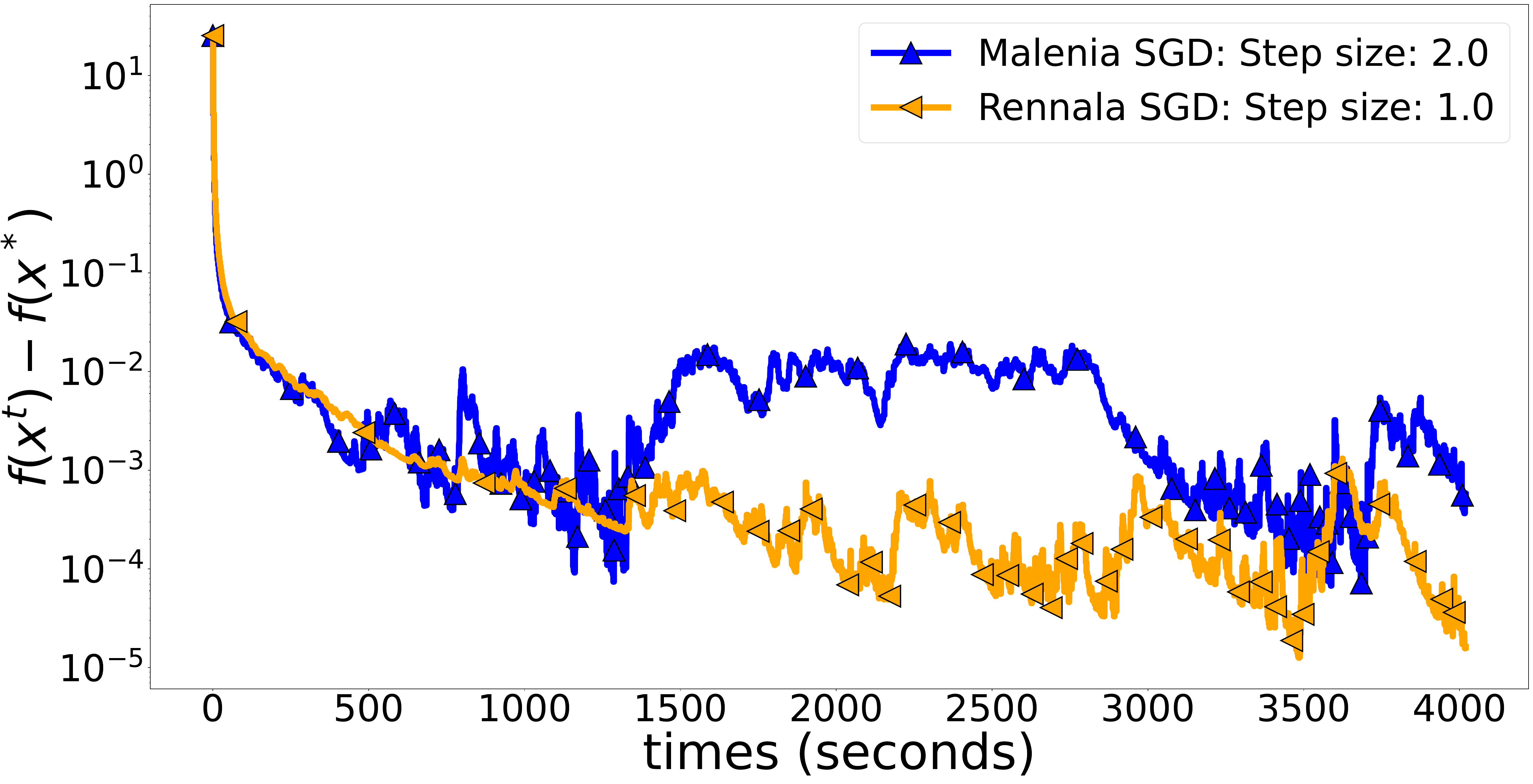}
  \end{subfigure}
  \caption{Comparison of the methods on quadratic optimization problems \emph{with interpolation}. Times $\{\tau_i\}$ less diverse: \emph{Left plot:} $\tau_i = \sqrt{i}$ for all $i \in [n].$ \emph{Right plot:} $\tau_1 = 0.01, \tau_2 = 1, \dots, \tau_n = 1.$}
  \label{fig:log}
\end{figure}

In Figures~\ref{fig:log} and \ref{fig:log_2}, we consider the methods in the interpolation regime. As expected, according to Section~\ref{sec:main}, \algname{Rennala SGD} outperforms \algname{Malenia SGD} in all experiments. We compare the methods with different $\{\tau_i\}.$ In Figures~\ref{fig:log}, the times $\{\tau_i\}$ are less diverse, so the difference between the methods is less profound. In Figures~\ref{fig:log_2}, $\{\tau_i\}$ are more different; thus, we can see that \algname{Rennala SGD} converges much faster to low function values because it has much less variance in the corresponding gradient estimator.

\begin{figure}[H]
  \centering
  \begin{subfigure}[t]{0.48\columnwidth}
    \centering
    \includegraphics[width=\columnwidth]{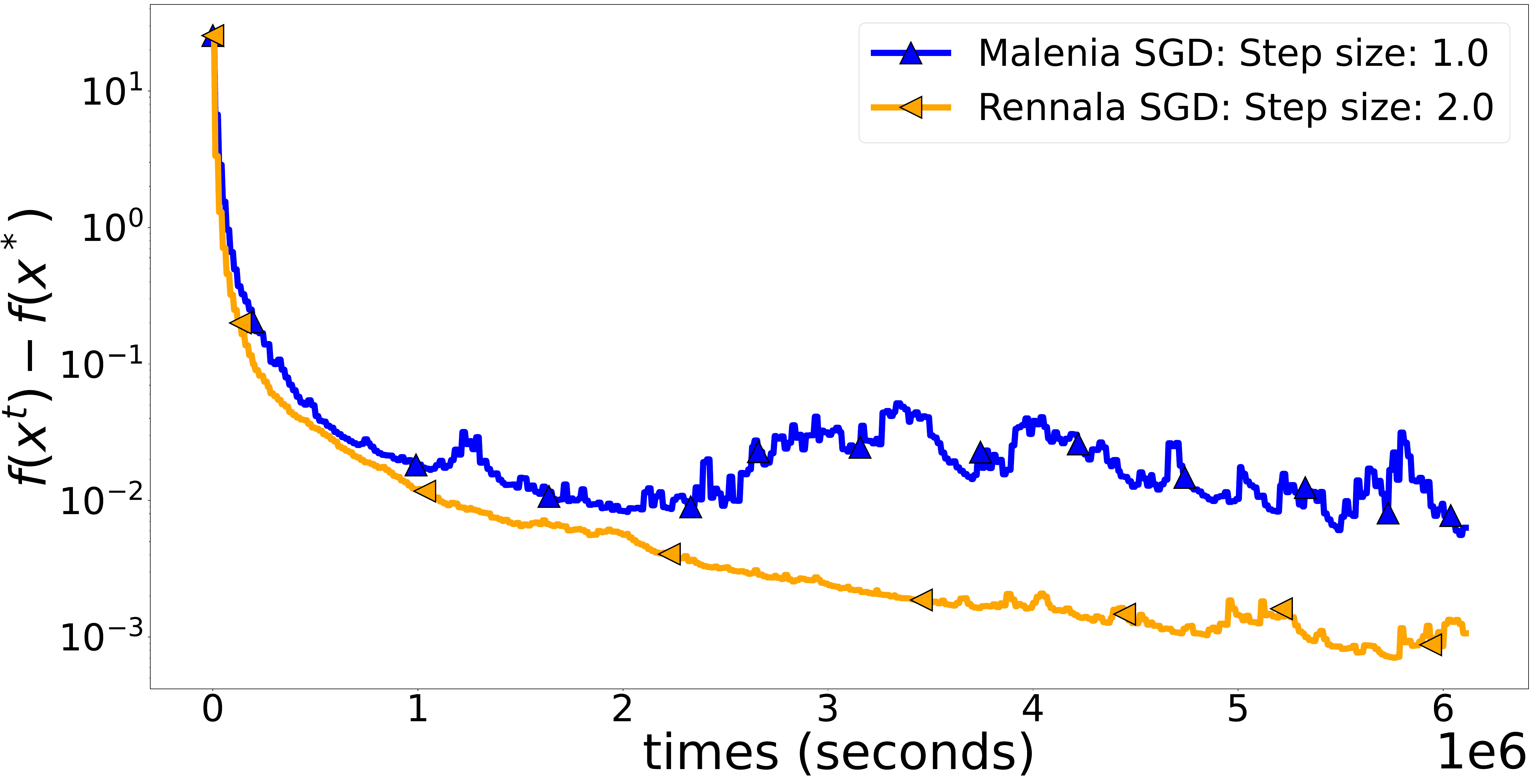}
  \end{subfigure}%
  \begin{subfigure}[t]{0.48\columnwidth}
    \centering
    \includegraphics[width=\columnwidth]{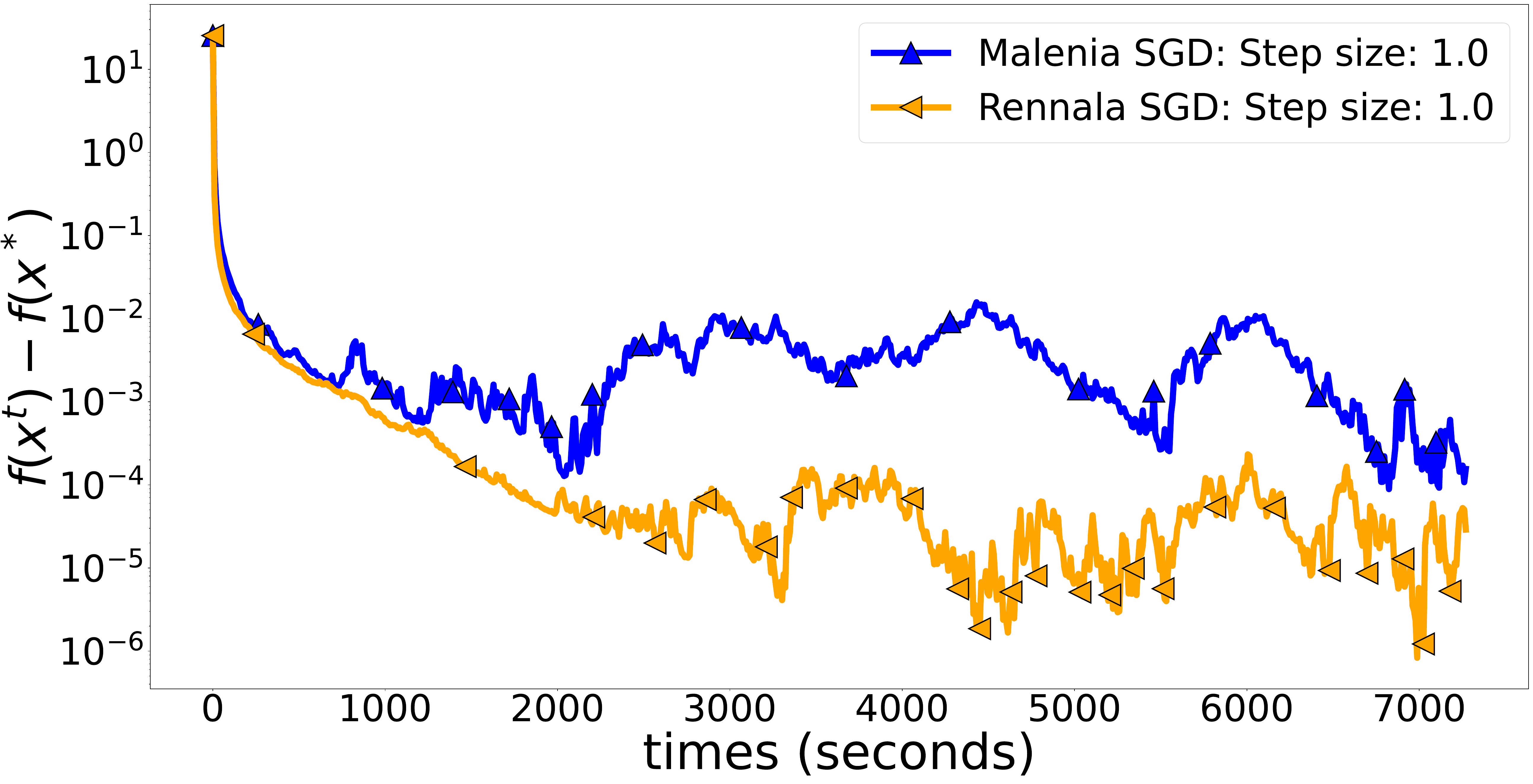}
  \end{subfigure}
  \caption{Comparison of the methods on quadratic optimization problems \emph{with interpolation}. Times $\{\tau_i\}$ more diverse: \emph{Left plot:} $\tau_i = i^2$ for all $i \in [n].$ \emph{Right plot:} $\tau_1 = 0.001, \tau_2 = 1, \dots, \tau_n = 1.$}
  \label{fig:log_2}
\end{figure}

\subsection{ResNet-18 and CIFAR-10}
\label{sec:resnet}
We also verify how \algname{Rennala SGD} and \algname{Malenia SGD} work with ResNet-18 and the CIFAR-10 classification problem \citep{krizhevsky2009learning} (License: MIT). Both algorithms take step size $\gamma = 0.25,$ sample a batch of size $128,$ and the smallest $S$ such that all workers calculate at least one batch. The dataset CIFAR-10 is split between the workers, so we consider the heterogeneous setting; all workers access different samples.
The results of the experiments are presented in Figure~\ref{fig:nn}. One can see that \algname{Rennala SGD} converges faster in terms of accuracy, which might be explained by the fact that neural networks work in the interpolation regime. Note that this is an empirical observation in the nonconvex setup, and explaining it from the theoretical point of view is an important future work.

\begin{figure}[H]
  \centering
  \includegraphics[width=0.5\columnwidth]{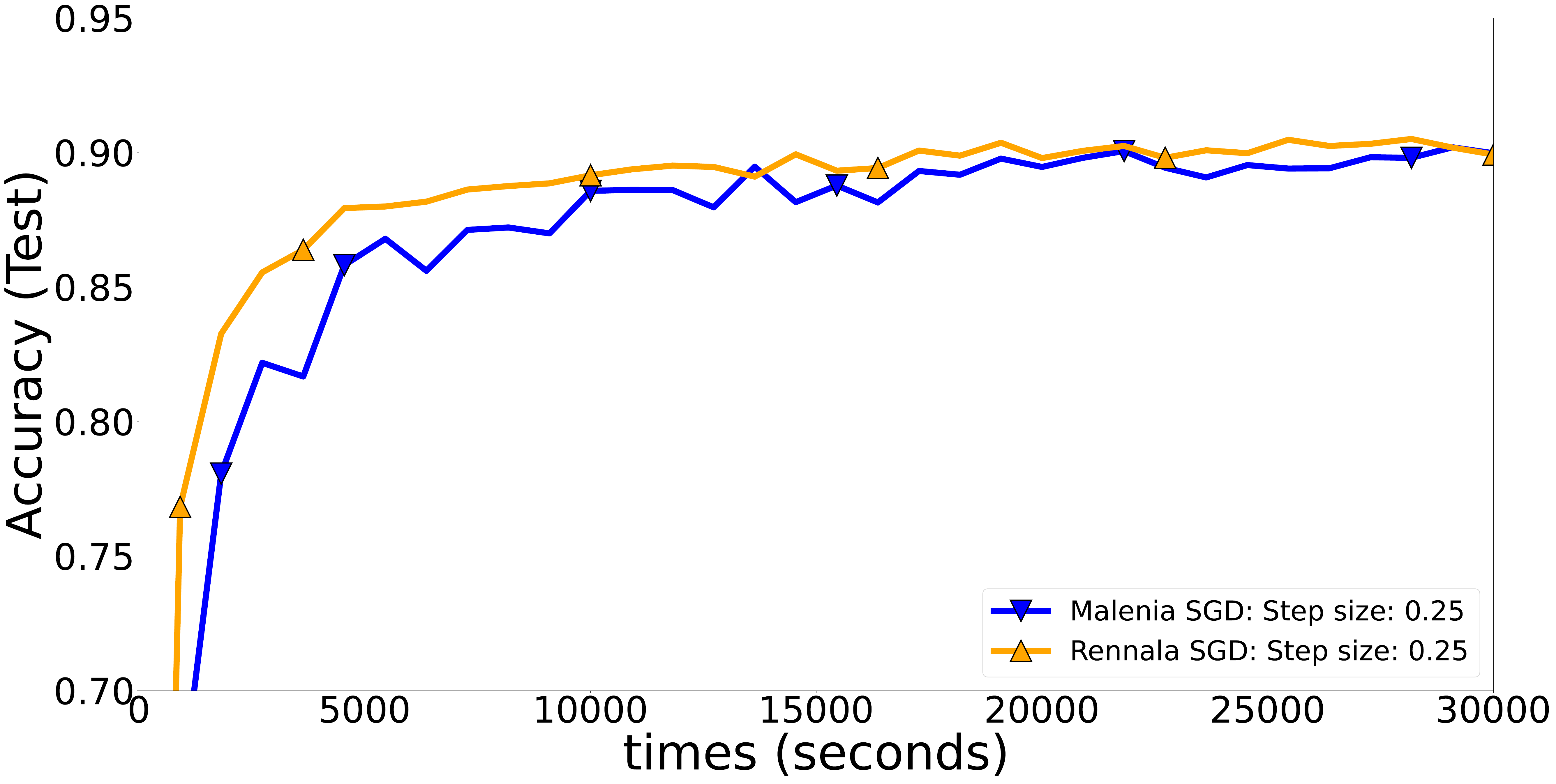}
  \caption{Comparison of the methods on the CIFAR-10 classification problem with ResNet-18. We take the computation time $\tau_i = i^2.$}
  \label{fig:nn}
\end{figure}

\newpage
\clearpage

\section{Experiments Details}
\label{sec:exp_details}
The experiments were run in Python 3 using an Intel(R) Xeon(R) Gold 6248 CPU @ 2.50GHz.

\subsection{Quadratic optimization task generation procedure}
\label{sec:exp_more}
In Section~\ref{sec:exp}, we perform experiments using synthetic quadratic optimization problems
\begin{align*}
  \min_{x \in \R^d} \frac{1}{n} \sum_{i=1}^n \left(\frac{1}{2}x^\top \mA_i x - x^\top b_i\right).
\end{align*}
Below, we present the algorithm, based on \citep{szlendak2021permutation}, that generates these problems. In all experiments, we take $s = 3$ to ensure that the generated matrices are diverse. We take $n = 100,$ $d = 100,$ and $\lambda = 0.001.$ The stochastic gradients are equal to the true gradients plus standard Gaussian noise added to the coordinates to emulate stochasticity.

With these parameters and procedures, we run the experiments from Section~\ref{sec:exp:without}. To conduct the experiments from Section~\ref{sec:exp:with} in the interpolation regime, we take the matrices $\mA_1, \cdots, \mA_n$, vectors $b_1, \cdots, b_n$ returned by Algorithm~\ref{alg:gen}. Let $\bar{x}_*$ be the solution of the quadratic optimization problem $\frac{1}{n} \sum_{i=1}^{n} \mA_i \bar{x}_* = \frac{1}{n} \sum_{i=1}^{n} b_i.$ Then, we redefine the vectors $\{b_i\}$ as $b_i = \mA_i \bar{x}_*$ to ensure that we are working in the interpolation regime. With this strategy, the matrices are still different, and the functions $\{f_i\}$ are not equal.

\begin{algorithm}[!h]
  \caption{Generate quadratic optimization tasks}
  \label{alg:gen}
  \begin{algorithmic}[1]
  \label{algorithm:matrix_generation}
  \STATE \textbf{Parameters:} number nodes $n$, dimension $d$, regularizer $\lambda$, and noise scale $s$.
  \FOR{$i = 1, \dots, n$}
  \STATE Generate random noises $\eta_i^s = 1 + s \zeta_i^s$ and $\eta_i^b = s \zeta_i^b,$ i.i.d. $\zeta_i^s, \zeta_i^b \sim \mathcal{N}(0, 1)$
  \STATE Take vector $b_i = \frac{\eta_i^s}{4}(-1 + \eta_i^b, 0, \cdots, 0) \in \R^{d}$
  \STATE Take the initial tridiagonal matrix
  \[\mA_i = \frac{\eta_i^s}{4}\left( \begin{array}{cccc}
    2 & -1 & & 0\\
    -1 & \ddots & \ddots & \\
    & \ddots & \ddots & -1 \\
    0 & & -1 & 2 \end{array} \right) \in \R^{d \times d}\]
  \ENDFOR
  \STATE Take the mean of matrices $\mA = \frac{1}{n}\sum_{i=1}^n \mA_i$
  \STATE Find the minimum eigenvalue $\lambda_{\min}(\mA)$
  \FOR{$i = 1, \dots, n$}
  \STATE Update matrix $\mA_i = \mA_i + (\lambda - \lambda_{\min}(\mA)) \mI$
  \ENDFOR
  \STATE Take starting point $x^0 = (\sqrt{d}, 0, \cdots, 0)$
  \STATE \textbf{Output:} matrices $\mA_1, \cdots, \mA_n$, vectors $b_1, \cdots, b_n$, starting point $x^0$
  \end{algorithmic}
\end{algorithm}

\subsection{Experiments with ResNet and CIFAR-10}

In Section~\ref{sec:resnet}, we consider the standard computer vision classification problem with ResNet-18 \citep{he2016deep} and CIFAR-10 \citep{krizhevsky2009learning}. We conduct the experiments using PyTorch and implement both \algname{Rennala SGD} and \algname{Malenia SGD} optimizers. For reproducibility, we use the default ResNet-18 architecture provided in PyTorch and split randomly and evenly the CIFAR-10 dataset across multiple workers to create a heterogeneous data distribution scenario. We use standard preprocessing techniques for CIFAR-10, including normalization and random cropping, and train the network for a fixed number of epochs. The performance metrics include top-1 accuracy. In total, we solve the optimization problem
\begin{align*}
  \textstyle\min\limits_{x \in \R^d} \frac{1}{n} \sum_{i=1}^{n} \left(\frac{1}{m} \sum_{j=1}^{m} \textnormal{loss(ResNet}(a_{ij};x), y_{ij})\right),
\end{align*}
where ``loss'' is the standard cross-entropy loss, $\{a_{ij}, y_{ij}\}$ are samples from CIFAR-10 splitted between the workers.

\end{document}